\newtheorem{thm}{Theorem}[section]
\newtheorem{lemma}{Lemma}[section]
\newtheorem{prop}{Proposition}[section]
\newtheorem{cor}{Corollary}[section]
\theoremstyle{definition}
\theoremstyle{remark}
\def \mcv {\mathcal {V}}
\def \intm {\stackrel{\circ}{M}}
\def \intx {\stackrel{\circ}{X}}
\def \mrn {{\mathbb R}^n}
\def \mrp {{\mathbb R}^p}
\def \mr {{\mathbb R}}
\def \mcs {{\mathcal S}}
\def \mcp {{\mathcal P}}
\def \mcd {{\mathcal D}}
\def \mcq {{\mathcal Q}}
\def \mca {{\mathcal A}}
\def \mcr {{\mathcal R}}
\def \mcp {{\mathcal P}}
\def \mco {{\mathcal O}}
\def \mcv {{\mathcal V}}
\def \mc {{\mathbb C}}
\def \mh {{\mathbb H}}
\def \mn {{\mathbb N}}
\def \mz {{\mathbb Z}}
\def \alf {\alpha}
\def \eps {\epsilon}
\def \del {\delta}
\newcommand{\Id}{\operatorname{Id}}
\newcommand{\Res}{\operatorname{Res}}
\newcommand{\diff}{\operatorname{Diff}}
\newcommand{\Tr}{\operatorname{Tr}}
\newcommand{\I}{\operatorname{I}}
\newcommand{\Ric}{\operatorname{Ric}}
\def \p {\partial}
\numberwithin{equation}{section}
\title[Asymptotically hyperbolic manifolds with polyhomogeneous metric] {Asymptotically hyperbolic manifolds with polyhomogeneous metric}
\author[Leonardo Marazzi]{ Leonardo Marazzi}
\address{Department of Mathematics
Purdue University,
West Lafayette IN 47907, U.S.A.}
\email{lmarazzi@math.purdue.edu}
\begin{document}
\input epsf\
\begin{abstract}
We analyze the resolvent and define the scattering matrix  for asymptotically hyperbolic manifolds with metrics which have a polyhomogeneous expansion near the boundary, and also prove that there is always an essential singularity of the resolvent in this setting.  We use this analysis to prove an inverse result  for conformally compact  odd dimensional Einstein manifolds. 
\end{abstract}
\maketitle

\section{\bf Introduction}\label{intro}

    The objective of geometric scattering theory is to study scattering on certain complete manifolds which are regular at infinity. In this framework one assumes that a complete Riemannian  manifold  can be compactified into a  $C^\infty$ manifold  $X$ with boundary $\p X,$ and the metric, which is necessarily singular at the boundary of the compactified manifold, has a precise asymptotic expansion there which
is modeled in a rather weak sense by well known examples, e.g.  Euclidean, hyperbolic, complex hyperbolic, cylindrical ends, etc.  In this paper we study asymptotically hyperbolic manifolds whose metrics have a polyhomogeneous expansion at the boundary.

Let $X$ be a  compact $C^\infty$ manifold with boundary $\p X$ of dimension $n+1.$ Let $x$ be a boundary defining function on $X,$ which means that $x\geq 0,$ $\p X=\{x=0\},$ and $dx\neq 0$ on $\p X.$ We say that the  compact manifold $X$ is asymptotically hyperbolic with a polyhomogeneous expansion at $\p X$ if there exits a collar neighborhood $U$ of $\p X$ and a diffeomorphism $\Psi: [0,\eps) \times \p X \longrightarrow U$ such that
\begin{gather}\label{hmet}
\Psi^* g= \frac{dx^2 + h(x)}{x^2},
\end{gather}
where $h(x),$ $x\in[0,\eps),$  is a family of metrics on $\p X$ which has an expansion
\begin{gather}\label{hpoly}
h(x,y,dy)\sim h_0(y,dy) + \sum_{ i>0 } x^i\sum_{0\leq j\leq U_i} (\ln x)^j h_{ij}(y,dy).
\end{gather}
Where $U_i\in \mn,$ and $h_0$ is a Riemannian metric on $\p X$ and $h_{ij}$ are symmetric 2-tensors at $\p X.$

We analyze the resolvent  
\begin{gather*}
\mcr(\zeta)= \left( \Delta_g-\zeta(\zeta-n)\right)^{-1}.
\end{gather*}
By the spectral theorem $\mcr(\zeta)$ is bounded in $L^2(X,g)$ provided that $\Re \zeta >>0.$

We denote by 
$$D=\{\zeta\in \mc: \mcr(\zeta) \mbox{ has a pole}, i=1,2\},
$$
and by 
\begin{equation*}
\Gamma= \left\{ \zeta \in \mc:  \zeta \in
\frac{n-\mn}{2} \right\}.
\end{equation*}
We use  techniques of Mazzeo-Melrose \cite{MM}, and Borthwick \cite{Borth} to construct a parametrix for $\mcr(\zeta)$  and use it to show that it can be  continued meromorphically to $\mc\backslash (\Gamma\cup D).$ We prove
\begin{thm}\label{resolvent}
The resolvent:
\begin{equation*}
\mcr_\zeta=\left[\Delta_g+\zeta(n-\zeta) \right]^{-1}:
\dot C^\infty(X)\rightarrow  C^\infty(\stackrel o X)
\end{equation*}
has a meromorphic continuation with respect to $\zeta$ to $\mc \backslash(\Gamma\cup D)$, and
\begin{equation*}
\mcr_\zeta\in\, ^0\Psi^{-2}+\,^0\Psi^{\zeta,\zeta}
+\Psi^{\zeta,\zeta}
\end{equation*}
\end{thm}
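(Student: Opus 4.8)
The plan is to follow the Mazzeo--Melrose parametrix method in the $0$-calculus, adapting the boundary analysis to accommodate the logarithmic terms forced by the polyhomogeneous expansion \eqref{hpoly}. Working on the $0$-stretched product $X^2_0$, obtained by blowing up the diagonal of $\p X\times\p X$ inside $X\times X$, one has three boundary hypersurfaces: the front face $\mathrm{ff}$, the left face $\mathrm{lf}$ and the right face $\mathrm{rf}$. Since $\Delta_g\in\diff^2_0(X)$ is an elliptic $0$-differential operator, I would first build a symbolic parametrix $G_1\in{}^0\Psi^{-2}$ inverting $\Delta_g+\zeta(n-\zeta)$ modulo a smoothing error that still fails to vanish at $\mathrm{ff}$; this step produces the leading ${}^0\Psi^{-2}$ summand in the claimed decomposition.

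The next step is to solve away the error on the front face by inverting the normal operator $N(\Delta_g+\zeta(n-\zeta))$, which is the shifted Laplacian of the model space $\mh^{n+1}$. Its resolvent is explicit and meromorphic in $\zeta$, and its lift to $X^2_0$ vanishes like $x^\zeta$ at $\mathrm{lf}$ and like $(x')^\zeta$ at $\mathrm{rf}$; this is what seeds the index $\zeta$ at the side faces and is responsible for the ${}^0\Psi^{\zeta,\zeta}$ summand. Correcting $G_1$ by this front-face inverse yields $G_2$ with an error $R_2$ in the calculus that now vanishes at $\mathrm{ff}$ but not yet at $\mathrm{lf},\mathrm{rf}$.

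To remove $R_2$ I would solve the indicial equation at the side faces iteratively. The indicial roots of $\Delta_g+\zeta(n-\zeta)$ are $\zeta$ and $n-\zeta$, and for $\Re\zeta>n/2$ the admissible root is $\zeta$, which fixes where the expansions begin. Here the polyhomogeneity enters decisively: because the Taylor coefficients of $h(x)$ in \eqref{hpoly} carry powers of $\ln x$, each step of the indicial iteration may generate additional logarithmic factors, so the index set at $\mathrm{lf}$ (and $\mathrm{rf}$) is not $\zeta+\mn$ but a genuinely polyhomogeneous set built on $\zeta$ with finitely many logs at each order. After an asymptotic (Borel) summation one obtains $G_3$ with $(\Delta_g+\zeta(n-\zeta))G_3=I-R_3$, where $R_3$ vanishes to infinite order at $\mathrm{ff}$ and $\mathrm{lf}$ and carries index $\zeta$ at $\mathrm{rf}$; its kernel therefore decays like $(xx')^N$ and $R_3(\zeta)$ is a holomorphic family of compact (indeed Hilbert--Schmidt) operators on a suitable weighted $L^2$ for $\Re\zeta\gg0$, $\zeta\notin\Gamma$. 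Analytic Fredholm theory then inverts $I-R_3(\zeta)$ meromorphically, the poles defining the set $D$, and the full resolvent is recovered as $\mcr_\zeta=G_3(\zeta)\bigl(I-R_3(\zeta)\bigr)^{-1}$; applying the composition theorem for the $0$-calculus places this product in ${}^0\Psi^{-2}+{}^0\Psi^{\zeta,\zeta}+\Psi^{\zeta,\zeta}$, the last summand arising from the residual correction on the unblown double space.

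The set $\Gamma=\{\zeta\in\frac{n-\mn}{2}\}$ must be excluded precisely because there the two indicial roots $\zeta$ and $n-\zeta$ differ by a nonnegative integer, so the indicial iteration becomes resonant and the hyperbolic model resolvent develops poles. The main obstacle, and the technical heart of the argument, is the bookkeeping of the logarithmic terms: I expect the hardest part to be proving that the indicial equation is solvable within a fixed polyhomogeneous class whose index sets do not accumulate unboundedly many powers of $\ln x$ under iteration, and that Melrose's pushforward and composition theorems remain valid for these log-enriched index families, so that the final composition yields exactly the index $\zeta$ at $\mathrm{lf}$ and $\mathrm{rf}$ asserted in the statement.
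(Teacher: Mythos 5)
Your proposal follows essentially the same route as the paper: a symbolic parametrix in ${}^0\Psi^{-2}$, correction at the front face by inverting the normal operator via the meromorphic model resolvent (the paper's Proposition on $\mcr_0(\zeta)$ acting between polyhomogeneous spaces on $\mcq$), an indicial iteration at the side faces that tracks both the powers of $x$ and the log orders, and finally analytic Fredholm theory applied to the compact remainder on weighted $L^2$ to produce the meromorphic continuation with poles forming $D$. You also correctly identify the paper's main technical point, namely that the log-enriched index sets remain controlled under the iteration, which the paper handles through its generalized spaces $\mca_{\beta;\alf}$ and the indicial-operator filtration.
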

Where $^0\Psi^{-2},$ $^0\Psi^{\zeta,\zeta},$
and $\Psi^{\zeta,\zeta}$ are defined in section \ref{sec2}. Using the methods of the proof of theorem \ref{resolvent} and  techniques of \cite{Borth,jsb0,MM} we prove the following 
\begin{thm}\label{new1} Let $x$ be such that \eqref{hmet} is satisfied. Then for  $\zeta\in \mc\setminus(\Gamma\cup D),$  given $f\in C^\infty(\p X),$ there exists a unique $u\in \mca_0(X)$ such that near $\p X,$
\begin{gather}\begin{gathered}\label{theopolyscmat}
\left( \Delta_g+\zeta(\zeta-n)\right) u(x,y)=0;\\
u(x,y)=x^{n-\zeta}F(x,y)+x^\zeta G(x,y),
\end{gathered}\end{gather}
where $F,G \in \mca_0(X),$ $F=f$ at $\p X.$
\end{thm}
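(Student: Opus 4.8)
The plan is to construct the solution near $\p X$ as a formal polyhomogeneous series, sum it, and correct the remaining interior error with the resolvent of Theorem~\ref{resolvent}. First I would analyze the indicial operator of $P\defi\Delta_g+\zeta(\zeta-n)=\Delta_g-\zeta(n-\zeta)$. From \eqref{hmet} one has $\Delta_g=-(x\p_x)^2+nx\p_x+x^2\Delta_{h(x)}+\dots$ near $\p X$, so $P(x^\alpha a)=[\alpha(n-\alpha)-\zeta(n-\zeta)]x^\alpha a+O(x^{\alpha+1})$, and the indicial roots are $\alpha=\zeta$ and $\alpha=n-\zeta$. Writing $u=x^{n-\zeta}F$ with $F\sim f+\sum_{k>0}x^k\sum_j(\ln x)^jF_{kj}$ and inserting into $Pu=0$, I would solve the transport equations recursively: the coefficient at order $k$ is obtained by inverting the scalar indicial factor $k(2\zeta-n-k)$. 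At the resonant values $\zeta\in\frac{n+\mn}{2}$ this factor vanishes, forcing a $(\ln x)$ term, which is admissible since $\mca_0(X)$ consists of polyhomogeneous functions; the terms $h_{ij}$ of \eqref{hpoly} likewise feed $(\ln x)^j$ contributions into the right-hand sides. The recursion therefore closes within $\mca_0(X)$ and yields a formal series $\hat F$ with $\hat F|_{\p X}=f$.

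Next I would Borel-sum $\hat F$ to an honest $F_\infty\in\mca_0(X)$ and set $u_1=\chi\,x^{n-\zeta}F_\infty$, with $\chi$ a cutoff equal to $1$ near $\p X$ and supported in the collar, so that $Pu_1=r\in\dot C^\infty(X)$ vanishes to infinite order at $\p X$. For $\zeta\in\mc\setminus(\Gamma\cup D)$ the resolvent $\mcr_\zeta$ of Theorem~\ref{resolvent} inverts $P$, so $u\defi u_1-\mcr_\zeta r$ satisfies $Pu=0$. To read off the boundary behaviour I would use the structure $\mcr_\zeta\in{}^0\Psi^{-2}+{}^0\Psi^{\zeta,\zeta}+\Psi^{\zeta,\zeta}$: because $r$ is rapidly vanishing at $\p X$, the small-calculus component ${}^0\Psi^{-2}$ contributes a term vanishing to infinite order there, while the ${}^0\Psi^{\zeta,\zeta}+\Psi^{\zeta,\zeta}$ components contribute a polyhomogeneous term with leading exponent $\zeta$. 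Hence $\mcr_\zeta r=x^\zeta G_0+e$ with $G_0\in\mca_0(X)$ and $e\in\dot C^\infty(X)$; absorbing $x^{-\zeta}e$ (still vanishing to infinite order) into the $x^\zeta$-term gives, near $\p X$, the decomposition $u=x^{n-\zeta}F+x^\zeta G$ with $F,G\in\mca_0(X)$ and $F|_{\p X}=f$.

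For uniqueness, the difference $w$ of two such solutions satisfies $Pw=0$ and $w=x^{n-\zeta}\tilde F+x^\zeta\tilde G$ with $\tilde F|_{\p X}=0$; for $\Re\zeta$ in a strip just to the right of $n/2$ both terms are square-integrable against $\dvol_g$, so $w\in L^2(X,g)$, and since $\zeta(n-\zeta)$ is not an $L^2$-eigenvalue for $\zeta\notin D$, self-adjointness of $\Delta_g$ forces $w=0$; the general case follows by meromorphic continuation, or directly from a boundary-pairing identity as in \cite{Borth,jsb0}. The step I expect to be the main obstacle is the polyhomogeneous bookkeeping underlying the first two paragraphs: since $h$ already carries $(\ln x)^j$ terms, the transport recursion generates a nontrivial index set, and one must check both that the indicial operator can be inverted at each order with obstructions appearing only as the admitted $\log$ terms, and that the mapping properties of $\mcr_\zeta$ are compatible with this index set, so that $\mcr_\zeta r$ indeed retains the clean $x^\zeta$-leading polyhomogeneous form asserted.
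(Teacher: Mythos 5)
Your proof is correct, but it takes a genuinely different route from the paper's. The paper obtains Theorem \ref{new1} from the Schwartz kernel of the resolvent: it forms the Eisenstein function $E_\zeta=(x')^{-\zeta}\mcr_\zeta\mid_{x'=0}$, notes that the ${}^0\Psi^{-2}$ piece of the decomposition in Theorem \ref{resolvent} dies under this restriction, and reads off $E_\zeta f\in\mca_\zeta(X)+\mca_{n-\zeta}(X)$ from the polyhomogeneous structure of the lifted kernel on $X\times_0\p X$, quoting Borthwick's argument verbatim. Your construction --- a formal transport series at the indicial root $n-\zeta$ with indicial factor $k(2\zeta-n-k)$, Borel summation, correction by $\mcr_\zeta$ applied to an error in $\dot C^\infty(X)$, and $L^2$ uniqueness in the strip $n/2<\Re\zeta<n/2+1$ followed by continuation --- is instead the Graham--Zworski scheme, which the paper itself deploys later (the operators $\Phi(\zeta)$ and $\mcp_l(\zeta)=(I-R(\zeta)(\Delta_g-\zeta(n-\zeta)))\Phi_l(\zeta)$, Proposition \ref{uniquepoisson}) to extract the finer pole and logarithm structure. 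The kernel approach buys exactly what the paper needs afterwards: an explicit conormal description of $E_\zeta$, and hence of $b_\p^*S(\zeta)$, on blown-up spaces, which drives the inverse results; your approach buys explicit control of the expansion coefficients and of where the recursion degenerates. On that last point, the degenerate set for the $x^{n-\zeta}$ series is $\zeta\in\frac{n+\mn}{2}$ (equivalently $n-\zeta\in\Gamma$), consistent with the special values $\zeta=n/2+l/2$ appearing later in the paper, and your handling of the resulting extra logarithms inside $\mca_0(X)$ is the right bookkeeping. Both routes rest on Theorem \ref{resolvent}, and your uniqueness argument, which the paper leaves implicit, is the standard one.
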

The spaces $\mca_0$ are  defined  in section \ref{secpoly} below.
Hence we can define the Poisson operator by

\begin{gather}
\begin{gathered}
E_\zeta: C^\infty(\p X) \longrightarrow \mca_0(X) \\
E_\zeta:f\mapsto u,
\end{gathered}\label{poisson}
\end{gather}
and the scattering matrix $S(\zeta)$   by

\begin{gather}\begin{gathered}\label{defsc}
S(\zeta): C^\infty(\p X) \longrightarrow C^\infty(\p X) \\
S(\zeta):f\mapsto G\mid_{\p X}.
 \end{gathered} \end{gather}

 We prove that the scattering matrix  $S(\zeta)$ at energy $\zeta$ is a pseudodifferential operator of order $2\zeta-n,$and its principal symbol  given by
\begin{gather*}
\sigma_{2\zeta-n}(S(\zeta))(\eta)= 2^{n-2\zeta}\frac{\Gamma(n/2-\zeta)}{\Gamma(\zeta-n/2)}
|\eta|^{2\zeta-n}_{h_0}.
\end{gather*}

 We show that it has a meromorphic continuation to $\mc\backslash (\Gamma\cup D).$ We also study the singularities of the  Resolvent $\mcr(\zeta),$ for a polyhomogeneous metric of the form \eqref{hmet}. We show that for a metric of the form \eqref{hmet} with $h$ given by
\begin{multline}
h=h_0+h_2 x^2+ (\hbox{even powers}) + h_{k,m_k} x^k (\ln x)^{m_k} +\cdots\\
+ h_{k,1} x^k (\ln x) +h_k x^k + O(x^{k+1}(\ln x)^{m_{k+1}}),
\end{multline}
there are higher order poles and the higher order residue at $\zeta=n/2+(k+1)/2$ given by 
$$
\Pi_{n/2+(k+1)/2}-l^{m_l}k\frac{(n-k)/2-1/2}{4}\Tr(h_0h_{k,m_k}),
$$
with $\Pi_{n/2+(k+1)/2}$ a finite rank operator which is the higher order residues of the resolvent. 

We extend the techniques of Guillarmou \cite{Guil} and Graham-Zworski \cite{GZ} to the case of a polyhomogeneous metric and obtain as a corollary

 \begin{cor}\label{corres}
Let $(X,g)$ be asymptotically hyperbolic manifold with 
metric $g$ of the form \eqref{hmet} with $h$ of the form
\begin{equation*}
h=h_0+h_2 x^2+ (\hbox{even powers}) + h_{k,m_k} x^k (\ln x)^{m_k} +\cdots+ h_{k,1} x^k (\ln x) +h_k x^k + O(x^{k+1}(\ln x)^{m_{k+1}}).
\end{equation*} 
If $k\neq n-1,$ and  $m_k>1,$ and the set $\{ \Tr(h_0^{-1}h_{k,m_k})=0 \}$ has zero measure, then $n/2-k/2-1/2$ is a essential singularity 
of $\mcr(\zeta).$   
\end{cor}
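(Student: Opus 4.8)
The plan is to deduce the corollary from the higher-order residue computation at the reflected point $\zeta_+ = n/2 + (k+1)/2$, and then transport that information across the critical line $\Re\zeta = n/2$ to the point $\zeta_- = n/2 - k/2 - 1/2 = n-\zeta_+$. First I would record that, by the singularity analysis preceding the corollary, $\mcr(\zeta)$ has at $\zeta_+$ a pole whose top-order part consists of the finite-rank operator $\Pi_{\zeta_+}$ together with a contribution proportional to $\Tr(h_0^{-1}h_{k,m_k})$, and whose order is governed by the logarithmic exponent $m_k$. The hypothesis that $\{\Tr(h_0^{-1}h_{k,m_k})=0\}$ has measure zero guarantees that this leading coefficient does not vanish identically on $\p X$, so the singular contribution produced by the $x^k(\ln x)^{m_k}$ term of \eqref{hpoly} is genuinely present and is not accidentally cancelled.

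Next I would invoke the reflection structure of the continued resolvent, i.e. the relation between $\mcr(\zeta)$ and $\mcr(n-\zeta)$ mediated by the Poisson operator $E_\zeta$ and the scattering matrix $S(\zeta)$ together with its functional equation $S(\zeta)S(n-\zeta)=\Id$, as developed in \cite{Guil,GZ} and extended to the polyhomogeneous setting in the proof of Theorem \ref{new1}. The continuation of $\mcr(\zeta)$ into $\{\Re\zeta<n/2\}$ past $\zeta_-$ is controlled by the indicial operator at $\p X$ and by the transport of the coefficients of \eqref{hpoly} into the boundary expansion of the resolvent kernel. The condition $k\neq n-1$ is imposed precisely so that $\zeta_-\neq 0$, keeping us away from the exceptional point $\zeta(n-\zeta)=0$ at the bottom of the spectrum, where the indicial roots coincide and the construction degenerates.

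The heart of the argument, and the step I expect to be the main obstacle, is to show that this continuation fails to be meromorphic at $\zeta_-$. The mechanism is that, since $2\zeta-n$ equals the negative integer $-(k+1)$ at $\zeta_-$, the continued kernel carries a term $x^{2\zeta-n}=x^{-(k+1)}e^{2(\zeta-\zeta_-)\ln x}$ whose coefficient is built recursively from \eqref{hpoly}; the factor $(\ln x)^{m_k}$ with $m_k>1$ then feeds, through the indicial recursion at this resonance, a coefficient function of $\zeta$ that develops singular terms of unbounded order at $\zeta_-$. Using the correspondence between powers of $\ln x$ and poles in the spectral parameter (the Mellin transform in $x$, under which $\ln x$ behaves like $\partial_\zeta$ acting on $x^\zeta$), this yields a Laurent expansion of $\mcr(\zeta)$ at $\zeta_-$ with infinitely many terms of negative order, which is exactly the assertion that $\zeta_-$ is an essential singularity. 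I would make this precise by isolating at each order the leading logarithmic coefficient, showing it is a nonzero multiple of $\Tr(h_0^{-1}h_{k,m_k})$, nonvanishing a.e. by hypothesis, times a factor that cannot vanish when $m_k>1$, so that the amplification cannot terminate.

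Finally I would check the bookkeeping needed to conclude: the finite-rank meromorphic pieces such as $\Pi_{\zeta_+}$, and the $\Gamma$-function prefactors entering the symbol of $S(\zeta)$, contribute only poles and zeros of finite order at $\zeta_-$ and therefore cannot absorb the unbounded-order behavior produced above. Hence the essential singularity survives at $\zeta_-=n/2-k/2-1/2$, as claimed.
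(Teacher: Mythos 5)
Your overall skeleton — compute the higher-order singularity at $\zeta_+=n/2+(k+1)/2$, then transfer it to $\zeta_-=n-\zeta_+=n/2-k/2-1/2$ through the resolvent/scattering-matrix relation — matches the paper's strategy, but the step you yourself flag as ``the heart of the argument'' is where the proposal has a genuine gap. Your mechanism for producing the essential singularity is a Mellin-type heuristic: $(\ln x)^{m_k}$ acts like $\partial_\zeta$ on $x^\zeta$, the indicial recursion ``amplifies'' this without terminating, and one supposedly obtains a Laurent expansion of $\mcr(\zeta)$ with infinitely many negative-order terms. This is not a proof and, more importantly, it is not the mechanism. At each finite order the boundary recursion (the $F_{j,i}$ iteration in the paper) produces only finite-order poles in $\zeta$; a fixed logarithmic exponent $m_k$ raises the pole order by a fixed finite amount, and nothing in the recursion forces an infinite Laurent tail at an isolated point. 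Indeed ``essential singularity'' here means failure of meromorphic continuation (typically accumulation of poles at $\zeta_-$), not an isolated singularity with an infinite principal part, and your argument never produces either.

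What is actually missing is the operator-theoretic input. The paper normalizes the scattering matrix, $\tilde S(\zeta)=c(n-\zeta)\Lambda^{n/2-\zeta}S(\zeta)\Lambda^{n/2-\zeta}=1+K(\zeta)$ with $K(\zeta)$ compact, and identifies the \emph{top} Laurent coefficient of $\tilde S$ at $\zeta_+$ (a pole of order $m_l$, $l=k+1$): because $m_k>1$, the finite-rank piece $\Pi_{\zeta_+}$ sits at lower order and the leading coefficient $K_m$ is, up to conjugation by powers of $\Lambda$ and a nonzero constant, multiplication by $\Tr(h_0^{-1}h_{k,m_k})$. The hypothesis that $\{\Tr(h_0^{-1}h_{k,m_k})=0\}$ has measure zero is used precisely to guarantee $\dim\ker K_m<\infty$ — not merely, as you say, that the coefficient ``does not vanish identically.'' One then applies the lemma (the analogue of Lemma 4.2 of \cite{Guil}) stating that the inverse of a family $1+K_m(\zeta-\zeta_0)^{-m}+K(\zeta)$, with $K_m$ compact of finite-dimensional kernel, is finite-meromorphic with an essential singularity at $\zeta_0$; this gives the essential singularity of $S(\zeta)^{-1}$ at $\zeta_+$, which passes to $\mcr$ at $\zeta_-$ via the identity expressing $\mcr(n-\zeta)$ through $\mcr(\zeta)$, $E_\zeta$ and $S(\zeta)^{-1}$. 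Without this compact-perturbation lemma (or an equivalent), your argument cannot close; your final paragraph about $\Gamma$-factors and $\Pi_{\zeta_+}$ contributing only finite-order poles is correct but addresses a side issue rather than the main one.
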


Next we study the inverse problem of recovering information about the manifold from the scattering matrix.  We use the methods of \cite{jsb0} to prove

\begin{thm}\label{invpoly}
Let $X,$ $\p X,$ $g_i,$ $S_i,$ for $i=1,2,$  be as above, and let $p\in\p X.$  Then there exists a discrete set $Q\in\mc$ such that if $\zeta\in\mc\backslash Q,$ and $S_1-S_2\in\,^0\Psi^{(2\Re\zeta-n-k;m)}(\p X),$\, $k,m\in \mn_0,$ $k\geq 1,$  near $p.$ Then there exists a diffeomorphism $\psi$ of a neighborhood $U\subset X,$ of $p,$ fixing $\p X,$ such that $\psi^*g_1-g_2=O(x^{k-2}(\ln x)^m).$ 
\end{thm}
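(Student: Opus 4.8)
The plan is to adapt the inverse-scattering argument of S\'a Barreto \cite{jsb0} to the polyhomogeneous setting, running an induction on the order of agreement of the two scattering matrices. The starting point is Theorem \ref{new1}: for each metric $g_i$ and each $\zeta\in\mc\setminus(\Gamma\cup D)$ I have a generalized eigenfunction $u_i=x^{n-\zeta}F_i+x^\zeta G_i\in\mca_0(X)$ solving \eqref{theopolyscmat} with $F_i=f$ at $\p X$, so that $S_i(\zeta)f=G_i|_{\p X}$. Since the principal symbol of $S_i(\zeta)$ is $2^{n-2\zeta}\frac{\Gamma(n/2-\zeta)}{\Gamma(\zeta-n/2)}|\eta|^{2\zeta-n}_{h_0^{(i)}}$, the hypothesis that $S_1-S_2$ has order strictly below $2\Re\zeta-n$ near $p$ forces the two principal symbols to coincide there, hence $h_0^{(1)}=h_0^{(2)}$ near $p$. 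After an initial diffeomorphism fixing $\p X$ (absorbing the conformal and boundary-defining-function freedom in \eqref{hmet}), I may assume $g_1$ and $g_2$ share the same conformal infinity $h_0$; this is the base case of the induction.

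The heart of the proof is a transport computation relating the lower-order terms of the full symbol of $S(\zeta)$ to the polyhomogeneous coefficients $h_{ij}$ of \eqref{hpoly}. Substituting the ansatz $u=x^{n-\zeta}F+x^\zeta G$ into $(\Delta_g+\zeta(\zeta-n))u=0$ and collecting equal powers of $x$ and $\ln x$ produces a recursive system: the coefficient of $x^{\zeta+l}(\ln x)^j$ in $G$ is determined by the metric coefficients $h_{i'j'}$ with $i'\le l$ together with the previously computed coefficients of $F$ and $G$. The structural fact I would isolate is that the coefficient $h_{k,m}$ enters the order-$(2\Re\zeta-n-k)$, log-degree-$m$ part of the symbol of $S(\zeta)$ linearly, through a scalar factor $c_k(\zeta)$ that is a ratio of Gamma functions times a polynomial in $\zeta$ (the same combination appearing in the principal symbol and in the residue formula preceding Corollary \ref{corres}), with all remaining contributions being nonlinear in the strictly more singular coefficients $h_{i'j'}$. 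The excluded set is then $Q=(\Gamma\cup D)\cup\{\zeta:c_j(\zeta)=0\text{ for some }j\le k\}$, which is discrete.

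With the linearization in hand the inversion follows by induction on the pair $(j,l)$, ordered by decreasing singularity. Suppose $S_1-S_2\in{}^0\Psi^{(2\Re\zeta-n-k;m)}(\p X)$ near $p$, so the full symbol of $S_1-S_2$ vanishes strictly above order $(2\Re\zeta-n-k$, log $m)$, and suppose inductively that $h^{(1)}_{i'j'}=h^{(2)}_{i'j'}$ for every $(i',j')$ strictly more singular than the current $(j,l)$. The transport relation then reduces the order-$(2\Re\zeta-n-j)$, log-degree-$l$ part of $S_1-S_2$ to $c_j(\zeta)$ times $\big(h^{(1)}_{j,l}-h^{(2)}_{j,l}\big)$ contracted against $\eta\otimes\eta$; since $\zeta\notin Q$ gives $c_j(\zeta)\neq0$ and the symbol vanishes, matching for all $\eta$ forces $h^{(1)}_{j,l}=h^{(2)}_{j,l}$. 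Running this through all $(j,l)$ more singular than $(k,m)$ matches the two expansions \eqref{hpoly} through every coefficient strictly more singular than $x^{k}(\ln x)^{m}$ in $h$, equivalently up to $x^{k-2}(\ln x)^m$ in $g$. Finally the residual freedom of the normal form \eqref{hmet}, namely the choice of collar and boundary defining function, is absorbed into a single diffeomorphism $\psi$ of a neighborhood $U$ of $p$ fixing $\p X$, chosen so that $\psi^*g_1$ and $g_2$ carry these matched coefficients; this yields $\psi^*g_1-g_2=O(x^{k-2}(\ln x)^m)$.

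The main obstacle is the transport computation of the second paragraph. Unlike the smooth case of \cite{jsb0}, each order $x^k$ in \eqref{hpoly} now carries a polynomial in $\ln x$ of degree $U_k$, so the recursion couples distinct log-powers and one must verify that the induced linear map from the tuple $\big(h_{j,l}\big)_l$ to the log-graded symbol terms of $S(\zeta)$ is triangular with nonzero diagonal entries $c_j(\zeta)$. Establishing that these $c_j(\zeta)$ vanish only on a discrete set, so that $Q$ is as claimed, and correctly propagating the logarithmic coefficients through the indicial and transport equations, is the essential technical point; once this is settled, the rigidity conclusion follows from the standard gauge-fixing construction of $\psi$.
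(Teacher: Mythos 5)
Your overall induction scheme (fix the gauge, match principal symbols to get $h_0^{(1)}=h_0^{(2)}$ near $p$, then peel off the polyhomogeneous coefficients one graded level $(j,l)$ at a time) coincides with the paper's, and your final bookkeeping — agreement of $h$ through all orders more singular than $x^{k}(\ln x)^{m}$ gives $\psi^*g_1-g_2=O(x^{k-2}(\ln x)^m)$ — is correct. The gap is in the step you yourself flag as the crux. You propose to extract the relation between $h_{k,m}$ and the order-$(2\Re\zeta-n-k)$, log-degree-$m$ piece of the symbol of $S(\zeta)$ from the recursion obtained by substituting $u=x^{n-\zeta}F+x^{\zeta}G$ into the equation and collecting powers of $x$ and $\ln x$. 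That recursion only determines the higher polyhomogeneous coefficients of $F$ and $G$ in terms of their boundary values $f$ and $G\mid_{\p X}=S(\zeta)f$; it does not determine $S(\zeta)f$ itself, which is the global connection coefficient between the two indicial solutions, nor does it see the singularity of the Schwartz kernel of $S(\zeta)$ at the diagonal of $\p X\times\p X$, which is what membership in $^0\Psi^{(2\Re\zeta-n-k;m)}(\p X)$ measures. For generic $\zeta$ the scattering matrix is a genuinely nonlocal pseudodifferential operator, so no purely local transport computation yields its full symbol, and the triangular linear map with nonzero diagonal entries $c_j(\zeta)$ that your induction relies on is not established by the argument you give.

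The paper proves the needed relation by a perturbative kernel computation on the stretched product. Theorem \ref{p-p} computes $P_2-P_1=x^k(\ln x)^{m}E+x^{k}(\ln x)^{m-1}R$, with $E$ an explicit differential operator depending linearly on the leading coefficient $L$ of $h_2-h_1$ through $H=h_1^{-1}Lh_1^{-1}$ and $T=\Tr(h_1^{-1}L)$. The resolvent identity $P_2(\mcr_1-\mcr_2)=(P_2-P_1)\mcr_1$ is then solved for a correction $F$ with $P_2(F)=x^k(\ln x)^mE\mcr_1$ lifted to $X\times_0 X$, and restriction to the corner of the left and right faces gives the leading singularity of the kernel of $S_2(\zeta)-S_1(\zeta)$ (Theorem \ref{leadsing}); the discrete exceptional set $Q$ arises from the zeros of the explicit factors $T_1(k,\zeta)$, $T_2(k,\zeta)$ and $M(\zeta)$ there, not from indicial coefficients of a transport hierarchy. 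Note also that the coupling of log-degrees you worry about is resolved there by writing $\ln x=\ln\rho+\ln R$ and $\ln x'=\ln\rho'+\ln R$ in the blow-up coordinates and matching powers of $\ln R$ at the corner. To make your proof work you must replace the transport computation of your second paragraph by this perturbation-of-the-resolvent analysis (or an equivalent study of the Eisenstein kernel at the right face); as written, the central claim is asserted rather than proved.
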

The spaces $^0\Psi^{(2\Re\zeta-n-k;m)}$ correspond to the natural analog to this case of the class of $\mcv_0$ pseudodifferential operator introduced for instance in \cite{MM}. We define these spaces in section \ref{sec2}. 
We apply theorem \ref{invpoly} to solve an inverse problem in odd dimensional Einstein manifolds.  We say that an asymptotically hyperbolic manifold 
$(X,g)$ of dimension $n+1$ is Einstein  if $g$ satisfies the condition
$$
\Ric (g)=-ng,
$$ 
where $\Ric$ is the Ricci curvatuture tensor.
Einstein manifolds have been studied by C.R.Graham \cite{Grah1}, 
C.R. Graham and M.Zworski \cite{GZ}, P.T. Chru\'sciel, et al. 
\cite{PDLS}, C. Guillarmou and A. S\'a Barreto \cite{GZ}, among 
others.   

When dim X$=n+1$ is even and $X$ is an Einstein manifold the tensor $h(x)$ defined in \ref{hmet} is $C^\infty$ up to $\p X,$ and the scattering matrix $S(\zeta)$ is well defined at $\zeta=n,$ the following inverse theorem was proved in  \cite{GSa}
\begin{thm}\cite{GSa}
Let $(X_i,g_i)$ $i=1,2,$ be $n+1$ even dimensional conformally compact Einstein manifolds, then if the scattering map 
$$
S_1(n)\mid_\mco=S_2(n)\mid_\mco,
$$  
where $S_i$ is the scattering matrix on $X_i,$ $i=1,2.$ The set  $\emptyset\neq\mco\subset\p X_1 \cap \p X_2$ is an open set, and $\Id: \mco\subset\p X_1\mapsto\p X_2$ is a diffeomorphism. 
Then there is a diffeomorphism 
$$
J:\bar X_1\rightarrow \bar X_2,
$$ 
such that $J^*g_2=g_1.$ 
\end{thm}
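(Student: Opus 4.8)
The plan is to reduce the statement to Theorem~\ref{invpoly} and then exploit the rigidity of the Einstein equation. Since $n+1$ is even the conformal infinity $(\p X,h_0)$ is odd dimensional, so by the Fefferman--Graham expansion (Graham \cite{Grah1}, Graham--Zworski \cite{GZ}) the metric \eqref{hmet} is smooth up to $\p X$ with an expansion $h(x)=h_0+h_2x^2+\dots+h_{n-1}x^{n-1}+h_nx^n+\dots$ having no logarithmic terms; in the notation of \eqref{hpoly} every $U_i=0$, and all statements below are used with log order $m=0$. Here the even-order coefficients $h_2,\dots,h_{n-1}$ are universal polynomial expressions in $h_0$ and its curvature, the term $h_n$ (trace- and divergence-free with respect to $h_0$) is the single piece of free Cauchy data, and once $h_0$ and $h_n$ are fixed the whole formal series is determined by the Einstein recursion.

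Reading the hypothesis, $S_1(n)\mid_\mco=S_2(n)\mid_\mco$ says that $S_1(n)-S_2(n)$ vanishes near each $p\in\mco$, hence is smoothing there, so that $S_1-S_2\in{}^0\Psi^{(2\Re\zeta-n-k;0)}(\p X)$ near $p$ for every $k\in\mn_0$ at $\zeta=n$. The matching of principal symbols,
\[
2^{-n}\frac{\Gamma(-n/2)}{\Gamma(n/2)}|\eta|^{n}_{h_0^{(1)}}=\sigma_{2\zeta-n}(S_1(n))(\eta)=\sigma_{2\zeta-n}(S_2(n))(\eta)=2^{-n}\frac{\Gamma(-n/2)}{\Gamma(n/2)}|\eta|^{n}_{h_0^{(2)}},
\]
forces $h_0^{(1)}=h_0^{(2)}$ on $\mco$, so the conformal infinities agree and $\Id:\mco\to\mco$ is an isometry of boundary metrics. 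Applying Theorem~\ref{invpoly} at $\zeta=n$---which is permissible because $S(\zeta)$ is regular at $\zeta=n$ in the even-dimensional Einstein setting, so $n\notin Q$---with $k$ large enough to reach the order at which the free term $h_n$ enters the symbol of $S(n)$, I obtain a diffeomorphism $\psi$ fixing $\p X$ with $\psi^*g_1-g_2=O(x^{k-2})$, equivalently $\psi^*h_1-h_2=O(x^{k})$ near $\mco$. Thus $h_0$ and $h_n$ agree for the two metrics, and since both $\psi^*g_1$ and $g_2$ are Einstein the recursion of the first paragraph then forces all remaining Taylor coefficients to coincide: $\psi^*g_1$ and $g_2$ agree to infinite order at $\mco$.

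The heart of the argument is to upgrade this infinite-order boundary agreement to an honest isometry, and this is the step I expect to be the main obstacle, since it is a genuine PDE input rather than a consequence of the scattering analysis. Both $\psi^*g_1$ and $g_2$ are conformally compact Einstein metrics sharing the same conformal infinity on $\mco$ and agreeing to infinite order there. Writing $\Ric(g)=-ng$ in a harmonic gauge relative to a fixed background turns it into a second-order quasilinear elliptic system, to which Biquard's unique continuation theorem for conformally compact Einstein metrics applies (the boundary regularity results of Chru\'sciel et al.\ \cite{PDLS} supply the regularity needed to run it); this yields $\psi^*g_1=g_2$ on a one-sided neighborhood of $\mco$ in $X$.

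Finally I would globalize. In harmonic coordinates an Einstein metric is real-analytic in the interior (DeTurck--Kazdan), so the isometry produced near $\mco$ extends by analytic continuation along paths; since the $X_i$ are connected, a monodromy argument assembles these continuations into a single diffeomorphism of the interiors, which extends smoothly to the closed manifolds by the boundary regularity of the metrics, giving the desired $J:\bar X_1\to\bar X_2$ with $J^*g_2=g_1$. Besides the unique continuation step, the points requiring care are the verification that the free datum $h_n$ is genuinely read off by $S(n)$ at finite order (so that a single application of Theorem~\ref{invpoly} suffices), and the consistency of the analytic continuation under monodromy.
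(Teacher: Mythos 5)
This theorem is quoted from \cite{GSa}; the paper offers no proof of it, but your strategy is essentially the one the paper runs for its odd-dimensional analogue, Theorem \ref{inveinstein} (and the one in \cite{GSa}): read off $h_0$ from the principal symbol of $S(n)$, recover the free Fefferman--Graham datum $h_n$ from the finite-order behaviour of the scattering matrix, conclude infinite-order agreement from the Einstein recursion, apply unique continuation near the boundary (Theorem \ref{uniquecont}), and globalize. So the route is the intended one.

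Two steps are genuinely under-justified. First, your licence to apply Theorem \ref{invpoly} at $\zeta=n$ rests on the claim that ``$S(\zeta)$ is regular at $\zeta=n$, so $n\notin Q$.'' That is a non sequitur: $Q$ is the exceptional set produced by the proof of Theorem \ref{invpoly}, and membership in it is governed by the zeros of $M_\zeta$ and of the constants $T_1(k,\zeta)$, $T_2(k,\zeta)$ appearing in the leading-singularity formula of Theorem \ref{leadsing}, not by regularity of $S$. Whether $T_i(k,n)\neq 0$ for the relevant $k$ (in particular $k=n$, the order at which the free datum $h_n$ first enters) is exactly the delicate computation separating the even- and odd-dimensional cases: it is the degeneration of precisely these quantities at $\zeta=n$ in the odd-dimensional ($n$ even) setting that forces the paper to pass to residues and the modified scattering operator in Section \ref{seceinst}. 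You must verify the nonvanishing explicitly for $n$ odd; asserting $n\notin Q$ does not discharge it. Second, your globalization replaces the paper's citation of Theorem 4.1 of \cite{LTU} by a hand-made analytic continuation plus monodromy argument. Analytic continuation of a local isometry between complete Einstein manifolds along paths is not obviously path-independent, and absent simple connectivity the step where ``a monodromy argument assembles these continuations'' is precisely the content of the Lassas--Taylor--Uhlmann theorem; as written it is an appeal to a result you have not proved. Citing \cite{LTU}, as the paper does for Theorem \ref{inveinstein}, closes this cleanly.
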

However unlike when $n+1$ is even, when $n+1$ is odd the tensor $h$ has a polyhomogeneous asymptotic behavior near the boundary $\p X,$ and that is the motivation for  the study of scattering on manifolds having polyhomogeneous metrics. In this case the scattering matrix has a pole at $\zeta=n,$ since $\Gamma(-n/2)$ has a simple pole for $n/2$ a positive integer. 
We study the principal  symbol of  the residues of the scattering matrix and the modified scattering operator (MSO)
\begin{equation}
\tilde\mcs f=\frac{d[(n-\zeta)S(\zeta)]}{d\zeta}\mid_{\zeta=n},
\end{equation}
to get an inverse result in this case. But it turns out it suffices to consider the MSO to obtain an inverse result. 

  It was proven by Graham \cite{Grah1} that if dim $X=n+1$ is odd and $(X,g)$ is asymptotically hyperbolic and Einstein then the family $h(x)$ defined in \eqref{hmet} is of the form
\begin{gather*}
h(x)=h_0(y,dy)+(\mbox{even powers}) + h_n x^n \ln x + F_n x^n + \cdots. 
\end{gather*}
C.Fefferman and C.R.Graham \cite{FG} proved that the coefficients $h_0$ and $F_n$ determine the entire expansion of the metric at $x=0,$ we use a unique continuation theorem of Biquard \cite{biq}\footnote{The approach in \cite{GSa} can also be used to prove the theorem.} (we state it in theorem \ref{uniquecont}) which uses results of Fefferman and Graham to obtain an isometry on a neighborhood of the boundary. We use theorem 4.1 of Lassas-Taylar-Uhlmann\cite{LTU} to extend this isometry to the whole manifold and prove

 \begin{thm}\label{inveinstein}
Let $X_i,$ $\p X_i,$ $g_i,$ for $i=1,2,$ be  $n+1$-dimensional Einstein manifolds; and let $S_i$ for $i=1,2,$ be the corresponding scattering  matrix. Assume $\emptyset\neq\mco\subset\p X_1 \cap \p X_2$ an open set, and that $\Id: \mco\subset\p X_1\mapsto\p X_2$ is a diffeomorphism.  If 
$$
\tilde\mcs_1 f\mid_\mco=\tilde\mcs_2 f\mid_\mco
$$
for all $f\in\mc^\infty_0(\mco).$ 
Then there exists a diffeomorphism $\psi$ satisfying $\psi^* g_2=g_1.$ 
\end{thm}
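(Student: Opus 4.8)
The plan is to prove Theorem \ref{inveinstein} by reducing it to the already-established Theorem \ref{invpoly} together with the rigidity results available in the Einstein setting. The logical chain has three links: first extract from the equality of modified scattering operators the vanishing of the difference of the two scattering matrices to sufficiently high order; second invoke Theorem \ref{invpoly} to produce a boundary diffeomorphism matching the two metrics to high order in $x$; and third upgrade this jet-level agreement to a genuine isometry near $\p X$ using the Einstein condition and the Fefferman-Graham \cite{FG} result that $h_0$ and $F_n$ determine the full expansion, and finally propagate the isometry to all of $X$ via Lassas-Taylor-Uhlmann \cite{LTU}.

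First I would analyze the hypothesis $\tilde\mcs_1 f\mid_\mco = \tilde\mcs_2 f\mid_\mco$. Since $\tilde\mcs_i = \frac{d[(n-\zeta)S_i(\zeta)]}{d\zeta}\mid_{\zeta=n}$, the agreement of the modified scattering operators on the open set $\mco$ encodes the equality of the relevant residue/derivative data of $S_1$ and $S_2$ at $\zeta=n$. The goal is to translate this into a statement that $S_1 - S_2$ lies in a smoothing class of the form $^0\Psi^{(2\Re\zeta-n-k;m)}(\p X)$ near a point $p\in\mco$, with $k$ large enough that the conclusion of Theorem \ref{invpoly} becomes nontrivial. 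Here the Einstein condition enters crucially: by Graham \cite{Grah1} the expansion of $h(x)$ contains only even powers up to the critical order together with the single log term $h_n x^n \ln x + F_n x^n$, so the free data governing $S(\zeta)$ near $\zeta=n$ is tightly constrained, and matching the MSO pins down the polyhomogeneous symbol of $S_1-S_2$ to high order. I would choose $\zeta=n$ (taking $\zeta\in\mc\backslash Q$, which is arranged since $Q$ is discrete and we may perturb, or by a limiting/analytic-continuation argument near $\zeta=n$) and read off the admissible $k,m$.

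Next, applying Theorem \ref{invpoly} at the point $p$ yields a diffeomorphism $\psi$ of a neighborhood $U\subset X$ of $p$, fixing $\p X$, with $\psi^* g_1 - g_2 = O(x^{k-2}(\ln x)^m)$. After pulling back by $\psi$ I may assume the two metrics agree to this order, hence their boundary tensors $h_0$ and the Fefferman-Graham data $F_n$ agree on $\mco$. At this stage I would invoke the unique continuation theorem of Biquard \cite{biq} (stated as Theorem \ref{uniquecont}), which, because both metrics are Einstein and their expansions are determined by $(h_0, F_n)$ per \cite{FG}, forces $\psi^* g_1 = g_2$ exactly on a (possibly smaller) neighborhood of the relevant boundary portion. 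This promotes the asymptotic agreement to a true local isometry.

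The main obstacle I anticipate is the first step: cleanly extracting a high-order vanishing of $S_1-S_2$ from equality of only the single derivative quantity $\tilde\mcs$ at $\zeta=n$. Because $S(\zeta)$ has a genuine pole at $\zeta=n$ in the odd-dimensional polyhomogeneous setting (coming from the simple pole of $\Gamma(-n/2)$), one cannot simply evaluate $S(\zeta)$ there; the MSO is precisely the renormalized object that remains finite and carries the scattering information. I would need to verify that the principal symbol of the residue of $S(\zeta)$ and the derivative data combine so that matching $\tilde\mcs_1$ and $\tilde\mcs_2$ genuinely controls enough coefficients of the expansion of $h$ to feed Theorem \ref{invpoly} with a useful $k$; this is where the explicit symbol computation $\sigma_{2\zeta-n}(S(\zeta)) = 2^{n-2\zeta}\frac{\Gamma(n/2-\zeta)}{\Gamma(\zeta-n/2)}|\eta|^{2\zeta-n}_{h_0}$ and the residue analysis preceding Corollary \ref{corres} must be brought to bear. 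The remaining steps are then largely an assembly of cited rigidity theorems, with the only subtlety being to ensure the neighborhoods on which Biquard's unique continuation and the Lassas-Taylor-Uhlmann extension apply are compatible, so that the final diffeomorphism $\psi$ with $\psi^* g_2 = g_1$ extends globally.
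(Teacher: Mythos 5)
Your last two steps (Biquard's unique continuation, Theorem \ref{uniquecont}, to promote agreement of $(h_0,F_n)$ to an exact isometry near the boundary, followed by Theorem 4.1 of \cite{LTU} to extend it to all of $X$) are exactly what the paper does. The gap is in your first and central step. You propose to convert $\tilde\mcs_1=\tilde\mcs_2$ on $\mco$ into the hypothesis of Theorem \ref{invpoly}, namely that $S_1(\zeta)-S_2(\zeta)\in\,^0\Psi^{(2\Re\zeta-n-k;m)}$ near $p$ for some $\zeta\in\mc\backslash Q$. But the hypothesis of Theorem \ref{inveinstein} gives you only the single renormalized quantity $\frac{d}{d\zeta}[(n-\zeta)S(\zeta)]\mid_{\zeta=n}$, i.e.\ data at the one exceptional energy $\zeta=n$ where $S(\zeta)$ has a pole. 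This says nothing about $S_1(\zeta)-S_2(\zeta)$ at any other energy: the two scattering matrices are a priori unrelated away from $\zeta=n$, so ``perturbing $\zeta$ off the discrete set $Q$'' or ``analytic continuation near $\zeta=n$'' cannot manufacture the input that Theorem \ref{invpoly} needs. As written, the reduction fails, and you correctly sensed that this is the weak link but did not supply a mechanism to close it.

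What the paper actually does is bypass Theorem \ref{invpoly} and work directly at $\zeta=n$ with the symbol machinery of Theorem \ref{leadsing}. It first proves a lemma: the principal symbol of $\tilde\mcs$ is a nonzero multiple of $|\eta|^n_{h_i(0)}\ln|\eta|_{h_i(0)}$, so equality of the MSOs forces $h_1(0)=h_2(0)$ on $\mco$; then, writing $h_2=h_1+x^k(\ln x)^mL+O(x^k(\ln x)^{m-1})$, it computes the leading singularity of $(n-\zeta)(S_1(\zeta)-S_2(\zeta))$ from Theorem \ref{leadsing}, differentiates in $\zeta$ at $\zeta=n$, and observes that the vanishing of the resulting symbol forces $L=0$; the Einstein structure (only even powers below order $n$, all locally determined by $h_0$, with the log coefficient $h_n$ also determined by $h_0$, so that the only free datum at order $n$ is the global term $F_n$) lets this induction run to order $n$ and yields $F_{1n}\mid_\mco=F_{2n}\mid_\mco$. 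That is precisely the pair $(h_0,F_n)$ needed to feed Theorem \ref{uniquecont} and then \cite{LTU}. If you want to salvage your outline, replace the appeal to Theorem \ref{invpoly} by this direct symbol computation of $\frac{d}{d\zeta}[(n-\zeta)(S_1(\zeta)-S_2(\zeta))]\mid_{\zeta=n}$ and the induction on the order of the first discrepancy.
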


$Acknowledgments:$ The author thanks C. Guillarmou and A. S\'a Barreto for many helpful discussions, and C. Guillarmou for pointing out a mistake on an earlier version.

\section{\bf Laplacian}\label{seclaplacian}

We assume the metric $h$  has a "polyhomogeneous" expansion in $x$ of the form
\begin{gather}\label{hpoly0}
h(x,y,dx,dy)=\sum_{i,j}\left[ h_{ij}(0,y,dy)+\sum_{m\in\mn} x^{k_{ij}^{(m)}} \sum_{0\leq l_{ij}^{(r)}\leq U_{ijm}}(\ln x)^{l_{ij}^{(r)}} \tilde h_{ij}^{(m,r)}(y)\right]dy_idy_j+\sum_j h_j(x,y)dxdy_j,
\end{gather}
where $l_{ij}^{(r)}\geq 0$ and $k_{ij}^{(m)}> 0$ for every $i,j,m,r;$ and $h_j$ is polyhomogenous in $x,$ and  $h\mid_{x=0}$ induces a Riemannian metric on $\p X$.

We prove that there exists a diffeomorphism that  puts the metric to normal form and the resulting metric is again polyhomogeneous, as stated in the introduction,
\begin{lemma}
 Let $X$ be a smooth manifold with boundary $\p X$  with a metric $g$ of the form \eqref{hmet}, with $h$ as in \eqref{hpoly0} in some product decomposition near $\p X$, $x$ being the defining function for $\p X$, and such that $h\mid_{x=0}$ is independent of $dx^2$.
Then fixed $h_0$ there exists a unique $x$ such that \eqref{hmet} holds, with $h$ of the form \eqref{hpoly}.
\end{lemma}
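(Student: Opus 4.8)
The plan is to realize the normal form as the solution of an eikonal equation for a new defining function, letting the associated normal flow supply the transverse coordinates. Writing the sought $\tilde x$ as $\tilde x = e^{\omega}x$ for an unknown function $\omega$, the requirement that $\tilde x^2 g = d\tilde x^2 + \tilde h(\tilde x)$ with $\tilde h$ purely tangential is equivalent to the single scalar condition $|d\ln\tilde x|^2_g = 1$ in a collar, together with the normalization of the induced boundary metric to the prescribed representative $h_0$ of the conformal class at $\p X$. Indeed, if $t=-\ln\tilde x$ satisfies $|dt|_g=1$ then $\nabla^g t$ is a unit geodesic field, and the Fermi coordinates $(t,y)$ it generates give a product decomposition in which $g=dt^2+k(t,y,dy)$; since $dt=-d\tilde x/\tilde x$, this reads $g=\tilde x^{-2}(d\tilde x^2+\tilde h)$ with $\tilde h=\tilde x^2 k$, which has no $d\tilde x$ component. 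Conversely the normal form forces $|d\ln\tilde x|_g=1$. Thus I first reduce the lemma to solving this eikonal equation in the polyhomogeneous category, the collar diffeomorphism $\Psi$ being the inverse of the normal flow map.

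Next I would set $\bar g=x^2g=dx^2+h$ and expand the condition $|d\ln\tilde x|^2_g=1$ in $\omega$. Using $g^{ij}=x^2\bar g^{ij}$ one obtains
\begin{gather*}
x^2|d\omega|^2_{\bar g}+2x\langle dx,d\omega\rangle_{\bar g}+|dx|^2_{\bar g}=1.
\end{gather*}
Because $h\mid_{x=0}$ has no $dx^2$ component by hypothesis, the cross terms $h_j(x,y)\,dx\,dy_j$ appearing in \eqref{hpoly0} vanish at $x=0$, so $\bar g^{xx}=1+O(x)$, $\bar g^{xj}=O(x)$, and $1-|dx|^2_{\bar g}=O(x)$. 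Writing $\langle dx,d\omega\rangle_{\bar g}=\bar g^{xx}\p_x\omega+\bar g^{xj}\p_j\omega$ and dividing by $x$, the equation becomes a non-characteristic first-order transport equation whose principal part is $2\,\p_x\omega$ and whose right-hand side is polyhomogeneous in $x$ and vanishes at $x=0$. The Cauchy datum $\omega\mid_{x=0}$ is fixed by the requirement $e^{2\omega\mid_{x=0}}h(0,y,dy)=h_0$, which encodes the choice of $h_0$ and renders the data unique.

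I would then solve this transport equation order by order in the polyhomogeneous expansion, integrating the right-hand side in $x$ along the characteristics to produce $\omega$ as a polyhomogeneous series. Having determined $\omega$, hence $\tilde x$, the normal flow of $\nabla^g\ln\tilde x$ furnishes $\Psi$, and pulling back $g$ gives $\tilde h=\tilde x^2 k$ in the form \eqref{hpoly}. Uniqueness follows because the transport equation with the prescribed Cauchy datum $\omega\mid_{x=0}$ admits a unique polyhomogeneous solution, so $\tilde x$ is determined once $h_0$ is fixed; equivalently, two normalized defining functions satisfying $|d\ln\tilde x|_g=1$ and inducing the same $h_0$ must coincide.

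The step I expect to be the main obstacle is the polyhomogeneous bookkeeping in the last paragraph: one must check that solving the eikonal/transport equation neither destroys the expansion \eqref{hpoly} nor introduces terms outside the index set, uniformly in the tangential variable $y$. The delicate point is that integration in $dx$ stays within the class, the only new phenomenon being the creation of an extra logarithmic factor at a resonance, where $\int x^{-1}(\ln x)^j\,dx=(\ln x)^{j+1}/(j+1)$; this is precisely the mechanism that produces the finite sums $\sum_{0\le j\le U_i}(\ln x)^j$ in \eqref{hpoly} and must be shown to grow in the controlled way dictated by the resonances. The classical existence and regularity of the eikonal solution, as in Graham--Lee, is not the difficulty; rather it is tracking the exact polyhomogeneous structure and the interaction of the $(\ln x)^j$ terms under integration and under the induced change of transverse coordinates.
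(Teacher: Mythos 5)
Your proposal is correct and follows essentially the same route as the paper, which invokes Lemma 2.1 of Graham: the substitution $x'=xe^{\omega}$ reduces the normal-form condition to the non-characteristic first-order equation $2(\nabla_{g_0}x)(\omega)+x|d\omega|^2_{g_0}=(1-|dx|^2_{g_0})/x$ with $\omega|_{x=0}$ fixed by the choice of $h_0$, and polyhomogeneity of $\omega$ is then read off from the equation. Your discussion of the resonance mechanism $\int x^{-1}(\ln x)^j\,dx=(\ln x)^{j+1}/(j+1)$ actually supplies more detail on the bookkeeping than the paper, which simply asserts that $\omega$ is necessarily polyhomogeneous.
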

\begin{proof}
The argument of Lemma 2.1 of \cite{Grah1} applies verbatim, we only need to prove that $h$ has the polyhomogeneous expansion stated. The proof in \cite{Grah1} is based on a change of variable, $x'=xe^\omega$ which gives $dx^2+h=e^{2\omega}g_0$, together with the condition $|dx|^2_{dx^2+h}=1$ and for $\omega$ prescribed at the boundary, the PDE
\begin{gather*}
2(\nabla_{g_0}x)(\omega)+x|d\omega|^2_{g_0}=\frac{1-|dx|^2_{g_0}}{x}
\end{gather*}
can be solved to get $\omega,$ since it is a non-characteristics first order PDE. If $g_0$ is a polyhomogeneous metric of the form \eqref{hpoly}, from $dx^2+h=e^{2\omega}g_0$ we have that $\omega$ is necessarily polyhomogeneous.
\end{proof}

A straightforward calculation using geometric series arguments and the definition of the determinant of a metric shows that we can write
\begin{gather}\label{laplaceminuszeta}
P(\zeta)=\Delta_g-\zeta(\zeta-n)=\sum_{j+|\alf|=0}^k p_{j,\alf}(x,y)(xD_x)^j(xD_y)^\alf-\zeta(\zeta-n),
\end{gather}
with $p_{j,\alf}$ a $C^\infty$ function in the interior and  polyhomogeneous in $x$ close  to the boundary. Spaces including operators of this kind on conformally compact manifolds were introduced in \cite{melcor,Borth}, and we will recall them in the following section.

Let $R_r$ be the radial $\mr^+$ action (multiplying by r) on the tangent space, and $f$ be the exponential function, then we can define the normal operator:
\begin{gather*}
N_p(P)u=\lim_{r\rightarrow 0} R_r^*f^*P(f^{-1})^*R_{1/r}^* u.
\end{gather*}
Taking the limit when $r$ goes to zero gives
\begin{gather*}
N_p(P)=\sum_{j+|\alf|=0}^k p_{j,\alf}(0,y)(xD_x)^j(xD_y)^\alf,
\end{gather*}
which expresses $N_p$ as just freezing the coefficients $p_{j,\alf}$ at p.

\section{\bf Polyhomogeneous conormal distributions}\label{secpoly}
We recall the spaces of functions introduced in \cite{melcor}. Let $M$ be a smooth manifold with corners as defined in \cite{melcor}, and
let $\rho=(\rho_1,...,\rho_p)$ be the defining functions for the finitely many
boundary faces
$Y_1,...,Y_P$ of $M$. Let $\mcv_b(M)$ be the set of smooth vector fields
tangent to the boundary, let $\beta \in C^\infty(M;\mrp),$ and
$m=(m_1,...,m_p)\in \mrn$ a multi-index. We recall the space of conormal distributions
$$\mca^{m}(M)=\{u\in C^\infty(\intm):\mcv_b^ku\in\rho^mL^\infty(M),  \forall k\},$$ $$\mca^{m-}=\bigcap_{m'<m}\mca^{m'},$$ and $$\mca_{\beta}(M)=\left\{u\in C^\infty(\intm ):\left[\prod_{l=0}^p\prod_{k=0}^{m_l-1}
(\rho_j\p_{\rho_j}-k)^{k+1} \right](\rho^{-\beta}u)
\in \mca^{m-}(M) \right\}.$$

We generalize this definition to allow leading terms having logarithmic functions, for $\beta$  and $\alf\in \mc^\infty(M;\mrp)$ we define the generalized space of
polyhomogeneous distributions:

\begin{equation}
\mca_{\beta;\alf}(M)=\left\{u\in C^\infty(\intm):\left[\prod_{l=0}^p\prod_{k=0}^{m_l-1}
(\rho_j\p_{\rho_j}-k)^{k+1} \right](\rho^{-\beta}(\ln \rho)^{-\alf}u)
\in \mca^{m-}(M) \right\}.
\end{equation}
Where $T_j=\rho_j\p_{\rho_j}$.
 It was proven in \cite{Borth} that for $$u\in C^\infty(\intm),$$
\begin{gather*}
u\in\mca_\beta(M)\Leftrightarrow u\sim \sum_{0\leq l\leq k<\infty} \rho_j^{\beta_j+k}(\ln \rho_j)^l a_{k,l},
\end{gather*}
 at each boundary surface $Y_j$, where "$\sim$" means that there is an asymptotic expansion of the given form.  Here $a_{k,l}\in\mca_{\beta^{(j)}}(Y_j)$, $\beta^{(j)}$ is a multi-index on each face $Y_j$ associated to $\beta$, looking at $Y_j$ as a manifold with boundary itself and setting $\beta^{(j)}_i=\beta_m\mid_{H_i},$ for $H_i$ a boundary hypersurface of $Y_j,$ and $Y_m$ the unique other boundary surface such that $H_i$ is a component (the corner between) of $Y_j\cap Y_m$. The proof given there is by constructing an expansion for $\rho^{-\beta}u$, and thus it suffices to prove it for $\beta=0.$ The same proof holds if we get the expansion for $\rho^{-\beta}(\ln \rho)^{-\alf}u,$ therefore we can assume $\beta=\alf=0.$ We have
 \begin{thm}
 For $u\in C^\infty(\intm),$ $u\in\mca_{\beta;\alf}(M)$ if and only if $u$ satisfies
 \begin{gather}\label{polyh}
u\sim \sum_{0\leq l\leq k<\infty} \rho_j^{\beta_j+k}(\ln \rho_j)^{\alf_j+l} a_{k,l},
\end{gather}
 at each boundary surface $Y_j$, where  $a_{k,l}\in\mca_{\beta^{(j)}}(Y_j)$, $\beta^{(j)}$ and $\alf^{(j)}$ are multi-indexes on each face $Y_j$ associated to $\beta$ and $\alf,$ respectively.
 \end{thm}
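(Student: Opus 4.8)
The plan is to reduce the statement to the already-established case $\beta=\alf=0$, i.e. to Borthwick's characterization of $\mca_0(M)$ recalled just above, by peeling off the leading weight. First I would set $v=\rho^{-\beta}(\ln\rho)^{-\alf}u$; since $u\in C^\infty(\intm)$ and the factor $\rho^{-\beta}(\ln\rho)^{-\alf}$ is smooth and nonvanishing in the interior, $v\in C^\infty(\intm)$ as well. Comparing the defining condition for $\mca_{\beta;\alf}(M)$ with the defining condition for $\mca_0(M)$, one sees that both require exactly that $\left[\prod_{l=0}^p\prod_{k=0}^{m_l-1}(T_j-k)^{k+1}\right]v\in\mca^{m-}(M)$, where $T_j=\rho_j\p_{\rho_j}$. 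Hence $u\in\mca_{\beta;\alf}(M)$ if and only if $v\in\mca_0(M)$, and this equivalence is essentially tautological from the definitions, matching the remark preceding the theorem that it suffices to produce the expansion for $\rho^{-\beta}(\ln\rho)^{-\alf}u$.

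With the reduction in hand I would invoke Borthwick's expansion in the case $\beta=0$: namely $v\in\mca_0(M)$ holds if and only if $v\sim\sum_{0\leq l\leq k<\infty}\rho_j^{k}(\ln\rho_j)^{l}a_{k,l}$ at each boundary surface $Y_j$, with coefficients polyhomogeneous on $Y_j$ for the induced multi-index. Both implications of the theorem then follow by multiplying expansions by $\rho^\beta(\ln\rho)^\alf$: from an expansion of $v$ one recovers $u=\rho^\beta(\ln\rho)^\alf v\sim\sum_{0\leq l\leq k}\rho_j^{\beta_j+k}(\ln\rho_j)^{\alf_j+l}\tilde a_{k,l}$, which is precisely \eqref{polyh}, and conversely an expansion of the form \eqref{polyh} for $u$ yields the $\mca_0$-expansion for $v$ upon dividing by the weight.

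The step requiring genuine care, and which I expect to be the main obstacle, is the transfer of the asymptotic expansions under multiplication by $\rho^\beta(\ln\rho)^\alf$. Near a fixed face $Y_j$ one must separate the factor $\rho_j^{\beta_j}(\ln\rho_j)^{\alf_j}$ associated to $Y_j$, which combines with $\rho_j^k(\ln\rho_j)^l$ to produce the claimed leading terms $\rho_j^{\beta_j+k}(\ln\rho_j)^{\alf_j+l}$, from the cross-face factors $\prod_{i\neq j}\rho_i^{\beta_i}(\ln\rho_i)^{\alf_i}$. The latter depend only on the defining functions of the remaining faces and must be absorbed into the coefficients $\tilde a_{k,l}$, which therefore live on $Y_j$; here one checks that this absorption keeps them polyhomogeneous on $Y_j$ with the induced multi-indices $\beta^{(j)}$ and $\alf^{(j)}$, exactly the inductive corner structure already present in Borthwick's statement. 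One also has to confirm that the relation ``$\sim$'' is preserved, i.e. that the remainder after truncation still improves by the expected order once multiplied by the (at worst logarithmically singular) weight. This is bookkeeping rather than conceptual difficulty: the multi-index shifts $\beta\mapsto\beta^{(j)}$ and $\alf\mapsto\alf^{(j)}$ must be tracked consistently across all faces, and the interaction of the $(\ln\rho_j)^{\alf_j}$ prefactor with the iterated operator $\prod(T_j-k)^{k+1}$ must be verified to produce no obstruction beyond the shift already built into the definition. Everything else is an immediate consequence of the tautological reduction combined with the cited expansion for $\beta=0$.
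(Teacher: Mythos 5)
Your proposal is correct and takes essentially the same approach as the paper: the paper's entire argument is the observation that the defining condition for $\mca_{\beta;\alf}(M)$ is exactly the $\mca_0$-condition applied to $\rho^{-\beta}(\ln\rho)^{-\alf}u$, so one reduces to $\beta=\alf=0$ and invokes Borthwick's expansion. The cross-face bookkeeping you flag as the delicate step is left implicit in the paper, so your write-up is if anything more careful than the original.
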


\section{\bf Stretched product}

The type of manifold with corners we need here is  obtained by blowing
up the product $X\times X$ along $\p \Delta \iota$, where $\p \Delta \iota=
(\p X \times \p X)\cap \Delta \iota \cong \p X$, and $\Delta \iota$ is the set of fixed
points of the involution $I$ that exchanges the two projections; $I$ satisfies
\begin{gather*}
I(\pi_L(X\times X))=\pi_r(X\times X),
\end{gather*}
where $\pi_L(X\times X)$ is the projection onto the first component, and
$\pi_r(X\times X)$ the projection onto the second component.

We use the usual notation for the stretched product $X\times_0 X$ and denote
the blow-down map by

\begin{gather}\label{b}
b:X\times_0 X\rightarrow X\times X.
\end{gather}

To analyze the functions using the blow-down $b$ map we look at the pull-back of the function under $b.$ This process is known as the blow-up of the manifold $X\times X$ and amounts to the introduction of singular
coordinates near the corner. We can use a different subset of these coordinates near each face,  near the left face, in local projective coordinates,  we use (with $Y=y-y'$)
\begin{gather}\label{lface}
s=\frac{x}{x'},\quad z=\frac{Y}{x'},\quad x',\quad y',
\end{gather}
near the front face we use
\begin{gather}\label{fface}
\rho=\frac{x}{|Y|},\quad \rho'=\frac{x'}{|Y|},\quad r=|Y|,
\quad \omega=\frac{Y}{|Y|},\quad y,
\end{gather}
near the right face  we use
\begin{gather}\label{rface}
t=\frac{x'}{x},\quad z'=-\frac{Y}{x},\quad x,\quad y.
\end{gather}

Setting
\begin{gather*}
R=\sqrt{(x')^2+x^2+|y-y'|^2}
\end{gather*}
the left, right, and front faces are characterize by $\rho=0$, $\rho'=0$, and
$R=0$ respectively.

We can also define other blow up that will be useful for the process of defining the scattering matrix through operators having Schwartz kernels whose pull-backs can be computed explicitly (e.g. equation \eqref{Mscatt}). Let $X\times_0\p X$ be the manifold with corners obtain by blowing up $X\times \p X$ along the diagonal $\Delta\subset \p X\times\p X$, and
$$
\tilde b: X\times_0\p X\rightarrow X\times \p X,
$$
the corresponding blow-down map, and let
$$
M=\overline{{\tilde b} ^{-1}(\p X\times \p X \backslash\Delta)}.
$$
Then
$$
b_\p=b\mid_M:M\sim \p X\times_0\p X\rightarrow\p X\times\p X,
$$
corresponds to the manifold $\p X\times\p X$  blown-up along the diagonal $\Delta\subset \p X\times \p X.$

\section{\bf $\mcv_0$ Operators}\label{sec2}

In this section we introduce the generalization needed  of the pseudodifferential operators modeled by $\mcv_0(X)$ which were used in \cite{MM}. In what follows $\zeta,\zeta',$  are complex numbers, and in most of the applications (to the parametrix construction) $\zeta'=\zeta'=0.$ 
For convenience we first introduce the spaces of half densities of the form
\begin{equation*}
h'=|h(x,y)|\frac{dx}{x}\frac{dy}{x^n},
\end{equation*}
with $h$ as before.
We denote the bundle of singular half-densities by
$\Gamma_0^{1/2}=\Gamma_0^{1/2}(X)$, the canonical section of $\Gamma_0^{1/2}$ is of the form
\begin{equation*}
\nu=|h(x,y)|^{1/2}
\left|\frac{dx}{x}\frac{dy}{x^n}\right|^{1/2}.
\end{equation*}
We consider the continuous linear maps, from the space of smooth sections vanishing to infinite order at the boundary to the space of extendible sections

\begin{equation}\label{B}
B: \dot C^\infty (X;\Gamma_0^{1/2})\rightarrow C^{-\infty}(X;\Gamma_0^{1/2}).
\end{equation}
In the  blow-up $X\times_0 X$ the extension of the
half-density bundle is given by
\begin{equation*}
\Gamma_0^{1/2}=(\pi_l)^*(\Gamma_0^{1/2})\otimes (\pi_r)^*(\Gamma_0^{1/2}),
\end{equation*}
Where $\pi_L(X\times X)$ is the projection onto the first component $X\times \p X$, and
$\pi_r(X\times X)$ the projection onto the second component $\p X\times X$.
This bundle is well defined, the canonical projection
\begin{equation*}
b:X\times_0 X \rightarrow X\times X
\end{equation*}
lifts $\Gamma_0^{1/2}(X\times X)$ to $\Gamma_0^{1/2}(X\times_0 X)$. We also introduce the corresponding class of $\mcv _0$ polyhomogeneous pseudodifferential operators by
\begin{gather*}
B\in \,^0\tilde \Psi^m(X,\Gamma^{1/2}_0) \Leftrightarrow \kappa(B) \in \,^0 \tilde K^m(X),
\end{gather*}
where $\kappa(B)$ is the lift to $X\times_0 X$ of the kernel of the map
$B$ defined in \eqref{B}, and  $^0\tilde K^m(X)$ is the space of polyhomogeneous conormal sections of order m of
the bundle $\Gamma_0 ^{1/2}$ associated to $\Delta \iota_0$ ($=\{ s=1,z=0\},
s=\frac{x}{x'},\quad z=\frac{Y}{x'}$) with coefficients given by  polyhomogeneous distributions and required to
vanish to all orders at the boundary components other than the front face.\\
As in \cite{Borth}, we also define $\,^0\Psi^{(\zeta;\zeta'),(\zeta;\zeta')}(X\times_0 X,\Gamma_0^{1/2})$ to be
the class of operators whose Schwartz kernel satisfy
\begin{gather*}
b^*K\in \mca_{-\infty,(\zeta;\zeta'),(\zeta;\zeta')}(X\times_0 X,\Gamma_0^{1/2}),
\end{gather*}
and are extendible across the front face (so no logarithmic terms there). The residual class of the
construction is $\Psi^{(\zeta;\zeta'),(\zeta;\zeta')}$ the operator with kernels in
$\mca_{(\zeta;\zeta'),(\zeta;\zeta')}(X\times X,\Gamma_0^{1/2})$.

Since the kernel $\kappa(B)$ of an operator $B\in \,^0\tilde\Psi^m(X)$ is polyhomogeneous conormal with respect to the lifted diagonal $\Delta \iota_0$ it can be restricted to a fibre $F_p$ of the front face lying over the point $(p,p)\in\p \Delta \iota=\{x=x'=Y=0\}$; this restriction is called  the normal operator, it was introduced in section \ref{seclaplacian},
\begin{gather*}
N_p(B)=\kappa(B)\mid_{F_p}.
\end{gather*}

The bundle $\Gamma_0 ^{1/2}$ is trivial, thus the normal operator can be defined as a convolution operator:
\begin{gather*}
[Np(B)f](x,y)=\int k(0, y, s,z)f\left(\frac{x}{s},y-\frac{x}{s}z\right)
\frac{ds}{s}dz \cdot \mu
\end{gather*}
where
\begin{gather*}
\quad \mu=|h(x,y)|\left|\frac{dx}{x}\frac{dy}{x^n} \right|
;\qquad f=f(x,y)\mu.
\end{gather*}

The construction for the symbol map $^0 \sigma$ can be carried out as in Mazzeo and Melrose (\cite{MM} section 5), although it needs the corresponding modification to polyhomogeneous operators. We think of the symbol of the kernel $\kappa(B)$ as a symbolic density on the fibres of the polyhomogeneous conormal bundle $\tilde N^*$ of the lifted diagonal,
\begin{gather*}
\sigma_m(\kappa(B))\in S^m(\tilde N^*(\Delta\iota_0); \Gamma_0(X)\otimes\Gamma(\mbox{fibre}))\hspace{3mm} \mbox{mod}\,\,S^{m-1}.
\end{gather*}
There is a natural isomorphism
\begin{gather*}
\delta:\tilde N^*(\Delta\iota_0)\leftrightarrow \,^0\tilde T^*X,
\end{gather*}
of the polyhomogeneous conormal bundle with the bundle $^0\tilde T^*X,$ dual to the bundle $^0\tilde TX$ of which the sections are the elements of $\mcv_0$ generated by $x\p_x, x\p_y,$ with  coefficients that are polyhomogeneous functions. The isomorphism $\delta$ is the dual to
\begin{gather*}
\tilde N(\Delta\iota_0)\leftrightarrow \,^0\tilde TX,
\end{gather*}
the lifting to the diagonal with respect to the blow-up local coordinates. In the blow-up coordinates the polyhomogeneous conormal bundle is spanned by $s\p_s, s\p_z$, and  coefficients that are  polyhomogeneous distributions in these coordinates (the lift of the corresponding quotients on the base space $X\times X$). To define the symbol we divide by the lift $\omega_0$ of the symplectic density form $h'$,
\begin{gather*}
^0\tilde \sigma_m(B)=\delta^*[\tilde \sigma_m(\kappa(B))]/\omega_0\in\tilde S^m(\,^0T^*X)\hspace{3mm} \mbox{mod}\,\,\tilde S^{m-1}.
\end{gather*}
 Such a symbol satisfies an exact sequence just as in \cite{MM}, we state this as a  
\begin{prop}\label{exact}
For any $m\in \mr$ the
symbol map gives a short exact sequence:

\begin{gather*}
0\rightarrow \,^0\tilde \Psi^{m-1}(X) \rightarrow \,^0\tilde \Psi^{m}(X)\rightarrow
\tilde S^m(^0 T^*X)/\tilde S^{m-1}(^0 T^*X)\rightarrow 0
\end{gather*}
such that:
\begin{gather*}
^0 \tilde \sigma_{m+m'}(B\cdot B')=\, ^0 \tilde \sigma_{m}(B)\cdot\, ^0 \tilde \sigma_{m'}( B')\,\,
\mod\,\tilde S^{m+m'-1}(\, ^0T^*X)
\end{gather*}
for
\begin{gather*}
B, B': \dot C^\infty (X;\Gamma_0^{1/2})\rightarrow \dot C^\infty (X;\Gamma_0^{1/2});
\,\,\, B\in \,^0\tilde \Psi^m, B'\in \,^0\tilde \Psi^{m'}.
\end{gather*}
\end{prop}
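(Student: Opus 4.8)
The plan is to follow Mazzeo--Melrose (\cite{MM}, section 5) essentially verbatim, the only new feature being that the transverse coefficients are polyhomogeneous rather than smooth; since the symbol lives in the directions conormal to the lifted diagonal $\Delta\iota_0$, and polyhomogeneity is a statement about behaviour at the boundary faces (all of which meet the interior of $\Delta\iota_0$ only at the front face), the local symbolic theory of conormal distributions applies with $C^\infty$ coefficients replaced by elements of the spaces $\mca_{\beta;\alf}$. Concretely, the symbol map sends $B\in{}^0\tilde\Psi^m(X)$ to the principal conormal symbol of $\kappa(B)$ on the fibres of $\tilde N^*(\Delta\iota_0)$, transported to $^0T^*X$ by the isomorphism $\delta$ and trivialized by dividing by $\omega_0$. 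I would establish injectivity of the inclusion, exactness in the middle, surjectivity, and then multiplicativity in turn.

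Injectivity of ${}^0\tilde\Psi^{m-1}(X)\hookrightarrow{}^0\tilde\Psi^m(X)$ is immediate from the definition as nested spaces of conormal kernels. For exactness in the middle I would argue that $^0\tilde\sigma_m(B)=0$ in $\tilde S^m(^0T^*X)/\tilde S^{m-1}(^0T^*X)$ holds if and only if the full conormal symbol of $\kappa(B)$ with respect to $\Delta\iota_0$ has order at most $m-1$; by the standard characterization of conormal distributions (H\"ormander), transported through $\delta$ and unaffected by the fixed trivialization $\omega_0$, this is equivalent to $\kappa(B)\in{}^0\tilde K^{m-1}(X)$, i.e. $B\in{}^0\tilde\Psi^{m-1}(X)$. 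Note that the polyhomogeneous coefficients do not affect the order count along the conormal fibre directions. Surjectivity I would prove by quantization: given a class $[a]\in\tilde S^m(^0T^*X)/\tilde S^{m-1}(^0T^*X)$, I would use a partition of unity subordinate to the projective charts \eqref{lface}--\eqref{rface} near $\Delta\iota_0$, apply $\delta$ to view $a$ as a symbol on the fibres of $\tilde N^*(\Delta\iota_0)$, and produce a conormal distribution by inverse Fourier transform in the conormal variables, multiplied by the canonical half-density $\nu$ and cut off to a neighbourhood of the lifted diagonal with vanishing to all orders at the left and right faces. Because the coefficients of $a$ are polyhomogeneous in $x$, the resulting kernel lies in ${}^0\tilde K^m(X)$, so it is the kernel of an operator $B\in{}^0\tilde\Psi^m(X)$ with $^0\tilde\sigma_m(B)=[a]$; this uses the characterization \eqref{polyh} to certify polyhomogeneity of the quantized kernel.

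The multiplicativity statement is the main obstacle, and here two things must be checked: that $B\cdot B'\in{}^0\tilde\Psi^{m+m'}(X)$ and that the leading symbol multiplies. The composition is computed on the triple stretched product by pulling back the kernels of $B$ and $B'$ under the stretched projections, multiplying, and pushing forward; I would invoke the pushforward theorem for polyhomogeneous conormal distributions of \cite{melcor,Borth} to conclude that the composite kernel is again polyhomogeneous conormal with respect to $\Delta\iota_0$, with index sets adding at the front face and vanishing to all orders elsewhere, so that $B\cdot B'$ remains in the calculus. On the symbolic level, near $\Delta\iota_0$ the composition reduces to the standard pointwise product of symbols on the common conormal fibre, whence $^0\tilde\sigma_{m+m'}(B\cdot B')=\,^0\tilde\sigma_m(B)\cdot\,^0\tilde\sigma_{m'}(B')$ modulo $\tilde S^{m+m'-1}(^0T^*X)$.

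The genuinely delicate part is tracking how the polyhomogeneous index sets combine under pushforward without producing extra logarithmic factors that would raise the conormal order at the lifted diagonal; in \cite{MM} the coefficients are smooth and the pushforward is cleaner, whereas here I must confirm that the index-set arithmetic at the front face feeds back to leave the symbolic order at $\Delta\iota_0$ unchanged. The mapping hypothesis $B,B':\dot C^\infty(X;\Gamma_0^{1/2})\to\dot C^\infty(X;\Gamma_0^{1/2})$ guarantees that the composition is defined and that residual terms vanish to infinite order, and hence do not contribute to the symbol, which isolates the symbolic computation to a neighbourhood of the lifted diagonal where the classical argument applies.
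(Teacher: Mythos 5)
Your proposal follows exactly the route the paper intends: the paper gives no proof of this proposition beyond asserting that the symbol map construction "can be carried out as in Mazzeo and Melrose (\cite{MM} section 5)" with the modification to polyhomogeneous coefficients, and your elaboration (symbol as principal conormal symbol on $\tilde N^*(\Delta\iota_0)$ transported by $\delta$, surjectivity by quantization, composition via the triple stretched product and the polyhomogeneous pushforward theorem) is precisely that argument spelled out in more detail than the paper provides. Your explicit flagging of the index-set arithmetic at the front face as the point needing verification is a fair and honest identification of the only step that is not literally verbatim from \cite{MM}, and is consistent with (indeed more careful than) the paper's treatment.
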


We can also define
\begin{gather}\label{pseudofaces}
^0\Psi^{p,(a;a'),(b;b')}(X)=\,^0\Psi^{(a;a'),(b;b')}(X)+\,^0\tilde \Psi^p(X).
\end{gather}
If $B\in\,^0\Psi^{\infty,(b,b')}(X)$ we have
\begin{equation*}
B:x^pL^2(X;\Gamma_0^{1/2})\rightarrow C^\infty(X,\Gamma_0^{1/2})\qquad\mbox{for}\quad p>n-b. 
\end{equation*}
This allows the composition with differential operators in $\,^0\tilde\Psi^k(X;\Gamma_0^{1/2})$ for any $k$. This gives composition with the type of operator we will get:
\begin{gather}\label{polycomp}
\diff_0^k(X;\Gamma_0^{1/2})\cdot\,^0\Psi^{m,(a;a'),(b;b')}(X;\Gamma_0^{1/2})\subset\,^0\Psi^{m+k,(a;a'),(b;b')}(X),
\end{gather}
and we have that their symbol is well defined (as a polyhomogeneous symbol) and satisfies an exact sequence as in theorem \ref{exact}. We also have
\begin{equation*}
^0 \tilde \sigma_{m+m'}(P\cdot B)=\, ^0 \tilde \sigma_{m}(P)\cdot\, ^0 \tilde \sigma_{m'}( B)
\qquad\mbox{for}\quad P\in\,^0\tilde\Psi^m(X;\Gamma_0^{1/2}),\quad B\in ^0\tilde\Psi^{m',a,b}(X;\Gamma_0^{1/2}).
\end{equation*}
For the parametrix construction we follow \cite{MM}, the idea is to solve the equation applying the normal operator and iterate the process, this produces a filtration, described in the following 
\begin{prop}\label{exactnormal}
The normal operator defines an exact sequence
\begin{multline*}
0\rightarrow R \,^0\Psi^{(a,a'),(b,b')}(X;\Gamma_0^{1/2}) \rightarrow\,^0\Psi^{(a,a'),(b,b')}(X;\Gamma_0^{1/2}) \\
\stackrel{N}\longrightarrow\mca_{(a,a'),(b,b')}(F;\Gamma_0^{1/2}
(X^l_*)\otimes \Gamma_0^{1/2}(X^r_*))\rightarrow 0.
\end{multline*}
Where $U(h)$ is a constant that depends on the metric $h,$  and for any operator $$P\in\,^0\tilde\Psi^k(X;\Gamma_0^{1/2}),\qquad    B\in\, ^0\Psi^{m',(a,a'),(b,b')}(X;\Gamma_0^{1/2}),$$ we have
\begin{gather*}
N_P(P\cdot B)=N_P(P)\cdot N_P(B).
\end{gather*}
\end{prop}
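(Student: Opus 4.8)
The plan is to follow the template of Mazzeo--Melrose \cite{MM}, section 5, adapting each step to the polyhomogeneous setting of section \ref{secpoly}. The third arrow $N$ is the restriction of the lifted kernel to the front face $\{R=0\}$. Since by definition the kernels of operators in $^0\Psi^{(a,a'),(b,b')}(X;\Gamma_0^{1/2})$ are extendible (smooth, with no logarithmic terms) across the front face, this restriction is well defined; and because the left and right boundary faces $\{\rho=0\}$, $\{\rho'=0\}$ of $X\times_0 X$ meet the front face in the two boundary hypersurfaces of the fibre, the polyhomogeneous expansions \eqref{polyh} of $b^*K$ at those faces restrict to polyhomogeneous expansions of $N_p(B)$ with the same index data. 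This places $N_p(B)$ in $\mca_{(a,a'),(b,b')}(F;\Gamma_0^{1/2}(X^l_*)\otimes\Gamma_0^{1/2}(X^r_*))$ and defines the map.

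First I would establish the multiplicativity $N_P(P\cdot B)=N_P(P)\cdot N_P(B)$. Using the trivialization of $\Gamma_0^{1/2}$ and the explicit convolution representation of the normal operator given in section \ref{sec2}, the restriction to the front face identifies each fibre $F_p$ with the solvable group $\mr^+\ltimes\mrn$ acting on the model space by $(s,z)\cdot(x,y)=(x/s,\,y-(x/s)z)$, and the composition of Schwartz kernels pushed forward from the triple stretched product restricts on the front face to convolution of the restricted kernels. The density normalisation (the lift $\omega_0$ of $h'$, together with the constant $U(h)$ fixing the invariant measure $\frac{ds}{s}\,dz$) is precisely what makes this restriction a group homomorphism. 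This is where Melrose's pushforward theorem for polyhomogeneous distributions enters: it guarantees that the convolution preserves the index families at the side faces, so that the product again lies in the same calculus.

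For exactness I would argue in two steps. Surjectivity: given a section $\kappa_0\in\mca_{(a,a'),(b,b')}(F;\Gamma_0^{1/2}(X^l_*)\otimes\Gamma_0^{1/2}(X^r_*))$ I choose a product decomposition of a collar of the front face in $X\times_0 X$, extend $\kappa_0$ to be independent of the normal variable $R$, and multiply by a cutoff supported near $\{R=0\}$; the resulting kernel is polyhomogeneous conormal with respect to $\Delta\iota_0$, extendible across the front face, with the prescribed index sets at the side faces, hence defines $B\in{}^0\Psi^{(a,a'),(b,b')}(X;\Gamma_0^{1/2})$ with $N_p(B)=\kappa_0$. Exactness in the middle: if $N_p(B)=0$ then $b^*K$ vanishes on $\{R=0\}$; since $K$ is extendible across the front face, this is equivalent to divisibility by the front face defining function $R$, and $R^{-1}b^*K$ is again polyhomogeneous conormal with unchanged index sets at the side faces, so $B=R\,B'$ with $B'\in{}^0\Psi^{(a,a'),(b,b')}(X;\Gamma_0^{1/2})$. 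The reverse inclusion $R\,{}^0\Psi^{(a,a'),(b,b')}\subset\ker N$ is immediate, since $N$ is restriction to $\{R=0\}$.

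The main obstacle will be controlling the polyhomogeneous index sets under the two nontrivial operations, namely the convolution in the multiplicativity step and the extension in the surjectivity step. In the smooth case of \cite{MM} these are routine Taylor-expansion and product arguments, but here one must verify that the extended unions of index families produced by the pushforward theorem coincide with the prescribed families $(a,a'),(b,b')$, and that no spurious logarithmic terms are created at the side faces; the characterization of $\mca_{\beta;\alf}$ by the expansion \eqref{polyh} established in section \ref{secpoly} is the tool that makes this bookkeeping tractable.
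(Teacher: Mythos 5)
The paper states this proposition without proof, deferring implicitly to Section 5 of \cite{MM} and to \cite{Borth}; your argument is precisely the standard one being invoked there (normal operator as restriction of the lifted kernel to the front face, exactness via divisibility by the front-face defining function $R$, surjectivity by extension off the front face with a cutoff, multiplicativity via the convolution structure on the fibres and the pushforward theorem for polyhomogeneous conormal distributions). So your proposal is correct and takes essentially the same approach the paper relies on, and you have correctly identified the only point requiring new work in the polyhomogeneous setting, namely the bookkeeping of index sets at the side faces via the expansion \eqref{polyh}.
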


The second filtration is provided by the indicial operator (defined below), for this case it filters the lower order term in $"x"$ and also the higher order term in $"\ln x",$ as it would be expected in order for the parametrix to work;
the details of the parametrix are discussed on Section \ref{secparametrix}.

\section{\bf Parametrix, the Poisson operator and the scattering matrix}\label{secpoiscmat}

\subsection{\bf The indicial operator}
For $g$ as in
\eqref{hmet} initially one wants to study
\begin{equation}\label{laplambda}
\Delta_g-\lambda^2.
\end{equation}
The zeros of this operator or eigenvalues of the Laplace-Beltrami operator, with absolutely continuous spectrum $\lambda^2\in [n^2/4,\infty).$  In order to make this part of the spectrum to be the positive real numbers, we subtract $n^2/4$ to \eqref{laplambda} and it is standard to consider the parameter $\zeta$ so that 
$$
\lambda^2-n^2/4=-\zeta(n-\zeta),
$$
this can be solve by
\begin{gather}\label{sigma}\begin{gathered}
\zeta(n-\zeta)+\lambda^2-\frac{n^2}{4}=0\\
\Rightarrow \zeta_{\pm}=\frac{n}{2}\pm i\lambda,
\end{gathered}\end{gather}
and from now on we use the parameter $\zeta$ only. The operator looks like
\begin{equation}
\Delta_g-\zeta(\zeta-n).
\end{equation}
Let $I$ the indicial operator
$$
I\left( \Delta_g-\zeta(\zeta-n)\right)=-x^2\p_x^2+(n-1)x\p_x-\zeta(\zeta-n).
$$
The indicial roots are obtained by setting the coefficient of the leading order term (i.e. $x^\eta$)
$$
\I [\Delta_g-\zeta(\zeta-n)]x^\eta
$$
 to be zero. The solutions given by:
$$
\eta=\zeta,\qquad \eta=n-\zeta.
$$
It was proven in \cite{Guil} that there are especial energies, where the resolvent has poles or essential singularities, given by

\begin{equation*}
\Gamma= \left\{ \zeta \in \mc:  \zeta \in
\frac{n-\mn_0 }{2}\right\};
\end{equation*}
we stay away from these points, and the set of scattering poles $D$ defined in the introduction.

\subsection{\bf Parametrix}\label{secparametrix}

As mentioned at the end of Section \ref{sec2}, we are going to keep track of not only the powers of $x$, but those of $\ln x$, to do so the lemma and proposition needed were proved in \cite{Borth}, the generalizations of lemma 3.2 and proposition 4.2 of that paper are:

\begin{lemma}\label{inductparametrix}
Let $\zeta \in \mc \backslash (\Omega\cup D),$ $k\in\mn,$ then for $v \in \mca_{\sigma+1;k}$,
we can find
$u\in\mca_{\sigma+1;k}$ such that
\begin{equation*}
v-[\Delta_g-\zeta(n-\zeta)]u \in \dot C^\infty(X).
\end{equation*}
 \end{lemma}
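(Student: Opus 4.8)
The plan is to reduce the lemma to matching formal polyhomogeneous expansions coefficient by coefficient, and then to realize the resulting formal solution as a genuine element of $\mca_{\sigma+1;k}$ by Borel summation, so that the residual error vanishes to infinite order at $\p X$. By the characterization theorem of Section \ref{secpoly}, the given $v\in\mca_{\sigma+1;k}$ has, near $\p X$, an expansion
\begin{equation*}
v\sim\sum_{0\le l\le j<\infty}x^{\sigma+1+j}(\ln x)^{k+l}\,v_{j,l},
\end{equation*}
with coefficients $v_{j,l}$ on $\p X$; I would look for $u$ with an expansion on exactly the same index set, $u\sim\sum_{0\le l\le j}x^{\sigma+1+j}(\ln x)^{k+l}u_{j,l}$, and determine the $u_{j,l}$ recursively.

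The key computation is the action of the operator on a single monomial. Writing $\Delta_g-\zeta(n-\zeta)$ in the form \eqref{laplaceminuszeta} and separating its indicial part gives
\begin{equation*}
\left[\Delta_g-\zeta(n-\zeta)\right]\bigl(x^s(\ln x)^p\bigr)=\chi(s)\,x^s(\ln x)^p+q_{s,p}(x,\ln x),
\end{equation*}
where $\chi(s)=-(s-\zeta)\bigl(s-(n-\zeta)\bigr)$ is the indicial polynomial, whose roots are the indicial roots $\zeta$ and $n-\zeta$ computed in Section \ref{secpoiscmat}, and the remainder $q_{s,p}$ collects strictly lower powers of $\ln x$ at the same power $x^s$, together with strictly higher powers of $x$. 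The lower-log contributions arise because $x^2\p_x^2$ and $x\p_x$ preserve the exponent $x^s$ while reducing the power of $\ln x$; the higher-$x$ contributions come from the polyhomogeneous, positively vanishing coefficients $p_{j,\alf}$.

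Substituting the two expansions and collecting the coefficient of $x^{\sigma+1+j}(\ln x)^{k+l}$ then yields, for each fixed $j$, a system for $\{u_{j,l}\}_l$ that is triangular in $l$ with constant diagonal entry $\chi(\sigma+1+j)$; its right-hand side involves $v_{j,l}$ together with the already-determined coefficients of lower order in $x$ and higher order in $\ln x$. Provided $\chi(\sigma+1+j)\neq0$ for every $j\ge0$, this triangular system is uniquely solvable, and one proceeds by induction on $j$ and downward induction on $l$, exactly as in Lemma 3.2 and Proposition 4.2 of \cite{Borth} but now with the monomials $x^s$ replaced by $x^s(\ln x)^p$. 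The hypothesis $\zeta\in\mc\setminus(\Omega\cup D)$ enters precisely here: it keeps each exponent $\sigma+1+j$ away from the indicial roots $\{\zeta,n-\zeta\}$, so that $\chi(\sigma+1+j)\neq0$ and, crucially, no new factor of $\ln x$ is forced into $u$; hence $u$ stays in $\mca_{\sigma+1;k}$ with the same top log order $k$.

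Finally I would invoke Borel's lemma in the polyhomogeneous category (as in \cite{melcor,Borth}) to produce an actual $u\in\mca_{\sigma+1;k}$ whose asymptotic expansion is the series just constructed; then $v-[\Delta_g-\zeta(n-\zeta)]u$ has identically vanishing asymptotic expansion at $\p X$ and is smooth in the interior, hence lies in $\dot C^\infty(X)$. The main obstacle is the logarithmic bookkeeping: one must check that inverting the triangular $\ln x$-system at each order never raises the maximal power of $\ln x$ above $k$. This is exactly what the resonance-avoiding hypothesis guarantees, since a coincidence $\sigma+1+j\in\{\zeta,n-\zeta\}$ would make $\chi(\sigma+1+j)=0$, collapse the diagonal, and force an extra factor of $\ln x$ — the same mechanism responsible for the higher-order poles and essential singularities analyzed later in the paper.
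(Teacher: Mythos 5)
Your proof is correct and is essentially the argument the paper intends: the paper supplies no proof of Lemma \ref{inductparametrix}, stating it only as the generalization of Lemma 3.2 of \cite{Borth}, and that lemma is proved by exactly the indicial-polynomial recursion, downward induction on the log power, and Borel summation that you carry out. One caveat: the hypothesis enters only through $\Omega$ (keeping $\sigma+1+j$ off the indicial roots $\zeta$, $n-\zeta$), not through $D$, and the genuinely delicate bookkeeping is not the resonance issue you flag at the end but the closure of the index family $\{(\sigma+1+j,\,k+l):0\le l\le j\}$ under multiplication by the polyhomogeneous coefficients $p_{j,\alf}$, which can themselves carry factors $x^{k_0}(\ln x)^{m_0}$ with $m_0>k_0$ when the log orders $U_i$ in \eqref{hpoly} grow faster than $i$ --- a point the paper also leaves implicit.
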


 We denote the model Laplacian by $\Delta_0=-(x\p_x)^2+nx\p_x-(x\p_y)^2,$ and by $\mcq$ be the hyperbolic space $\mh^{n+1}$ blown-up at a boundary point.

\begin{prop}\label{b42poly}
The model resolvent $\mcr_0(\zeta)=[\Delta_0-\zeta(n-\zeta)]^{-1}$ can be extended to a meromorphic map
$$
\mcr_0(\zeta):\mca_{(\zeta+k;k'),\zeta-l}(Q)\rightarrow \mca_{\zeta,\zeta-l}(Q),\qquad\mbox{for}\quad k,k'\in\mn,\quad l\in\mn_0,
$$
with poles $\zeta\in\frac{1}{2}(n-k-\mn_0)\cup \frac{1}{2}(-l-\mn_0)$ and $-\mn_0$ for $n$ odd.
 \end{prop}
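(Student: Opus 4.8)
The plan is to reduce everything to an explicit analysis of the model resolvent on $Q$, the hyperbolic space $\mh^{n+1}$ blown up at a boundary point, and then to upgrade Proposition 4.2 of \cite{Borth}---which is the case with no extra logarithmic index---by propagating the logarithmic terms through the construction. First, working in the upper half-space model of $\mh^{n+1}$, I would diagonalize the model Laplacian $\Delta_0=-(x\p_x)^2+nx\p_x-(x\p_y)^2$ by a Fourier transform in $y$; with dual variable $\eta$ this turns $\Delta_0-\zeta(n-\zeta)$ into a modified Bessel operator in $x$ whose two solutions are $x^{n/2}I_\nu(x|\eta|)$ and $x^{n/2}K_\nu(x|\eta|)$ with $\nu=\zeta-n/2$. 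The resolvent kernel is assembled from these, and its dependence on $\zeta$ is controlled entirely by the $\Gamma$-factors $\Gamma(\zeta-n/2)^{\pm1}$ carried by the Bessel functions; this yields a holomorphic family for $\Re\zeta$ large together with an explicit meromorphic continuation, the intrinsic poles $\zeta\in -\mn_0$ for $n$ odd arising precisely from these factors.

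Next I would read off the action on the polyhomogeneous spaces from the indicial operator $\I(\Delta_g-\zeta(\zeta-n))=-x^2\p_x^2+(n-1)x\p_x-\zeta(\zeta-n)$, whose roots are $\eta=\zeta$ and $\eta=n-\zeta$. Applying $\mcr_0(\zeta)$ to a forcing term $x^\mu(\ln x)^p$ with $\mu$ not a root produces $x^\mu$ times a polynomial of degree $\le p$ in $\ln x$, divided by the indicial factor $-(\mu-\zeta)(\mu-(n-\zeta))$, together with the homogeneous solution $x^\zeta$; this is why a forcing with leading index $\zeta+k$ at the relevant face is sent to leading index $\zeta$, while the index $\zeta-l$ at the other face is preserved. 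Summing the formal series, using the inductive solvability of Lemma \ref{inductparametrix} to kill the remainder, and pushing forward the explicit kernel, I would conclude that $\mcr_0(\zeta)v\in\mca_{\zeta,\zeta-l}(Q)$.

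The new content, and the step I expect to be the main obstacle, is the precise propagation of the logarithmic powers and the resulting location and order of the poles. The key elementary point is that when $\mu$ does hit an indicial root the particular solution must acquire an extra factor of $\ln x$, so the logarithmic degree increases by one and, as a function of $\zeta$, the indicial factor $-(\mu-\zeta)(\mu-(n-\zeta))$ vanishes and creates a pole. Running this along the two boundary expansions of $v$ and matching the powers $x^{\zeta+k+j}$ and $x^{\zeta-l+j}$, $j\in\mn_0$, against the roots $\zeta$ and $n-\zeta$ and against the $\Gamma$-factors of the kernel yields the shifted families $\zeta\in\tfrac12(n-k-\mn_0)$ (from the index $\zeta+k$ resonating with the root $n-\zeta$) and $\zeta\in\tfrac12(-l-\mn_0)$ (from the index $\zeta-l$), while the intrinsic $\Gamma$-factor contributes $-\mn_0$ for $n$ odd. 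The difficulty is bookkeeping but delicate: one must control how the logarithms generated at successive resonances accumulate so that the image genuinely lies in $\mca_{\zeta,\zeta-l}(Q)$ with only finitely many logarithmic terms at each order, verify that the poles remain of finite order rather than becoming essential, and check that the $\Gamma$-factor contributions and the indicial resonances do not combine to raise the pole order. This is exactly the place where the extra indices $k'$ and $l$ alter the argument of \cite{Borth}, and those estimates must be redone while retaining every logarithmic term.
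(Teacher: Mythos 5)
Your overall strategy is the same one the paper relies on: the paper's entire proof of Proposition \ref{b42poly} is the single sentence that it is ``identical to the one of proposition 4.2 in \cite{Borth}'', and your sketch is essentially an unpacking of that argument --- explicit model resolvent on $\mh^{n+1}$ via Fourier transform in $y$ and Bessel/$\Gamma$-factors, indicial roots $\zeta$ and $n-\zeta$, poles produced when the boundary indices of the forcing term resonate with those roots or with the intrinsic $\Gamma$-factors. Your identification of the three pole families (index $\zeta+k$ resonating with $n-\zeta$ giving $\tfrac12(n-k-\mn_0)$, the $\zeta-l$ face giving $\tfrac12(-l-\mn_0)$, and the kernel's own $\Gamma$-factor giving $-\mn_0$ for $n$ odd) is consistent with the statement, though you do not exhibit the mechanism for the second family beyond attributing it to ``the index $\zeta-l$''.

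The one point at which this proposition genuinely differs from Borthwick's Proposition 4.2 is the extra logarithmic index $k'$ in the domain space $\mca_{(\zeta+k;k'),\zeta-l}(Q)$, and this is precisely the step you flag as ``the main obstacle'' and then defer: you describe what must be checked (that a forcing $x^{\mu}(\ln x)^{p}$ yields $x^{\mu}$ times a polynomial of degree $\le p$ in $\ln x$ off resonance, degree $p+1$ on resonance; that the accumulated logarithms stay within the index sets of $\mca_{\zeta,\zeta-l}(Q)$; that the poles stay of finite order) but you do not carry out the bookkeeping. As a standalone proof this is therefore incomplete at exactly the point where new content is required --- although, to be fair, the paper supplies no more detail there either. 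Two smaller remarks: invoking Lemma \ref{inductparametrix} to kill the remainder is slightly misplaced, since that lemma concerns $\Delta_g$ rather than the model operator $\Delta_0$ (one needs its model analogue); and you should note that the target space $\mca_{\zeta,\zeta-l}(Q)$ does admit logarithms at subleading orders (the expansion $\sum_{0\le l\le k}\rho^{\beta+k}(\ln\rho)^{l}a_{k,l}$ of section \ref{secpoly}), so the requirement is only that the leading coefficient at index $\zeta$ --- which comes from the log-free homogeneous solution $x^{\zeta}$ --- carries no logarithm, which your off-resonance computation does give.
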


The proof of this proposition is identical to the one of proposition 4.2 in \cite{Borth}. Using this Proposition we provide a proof of the meromorphic continuation of the resolvent:

\begin{prop}\label{parametrixpoly}
Let $\zeta\in \mc \backslash \Gamma$, then there exists $M_\zeta$
analytic, such that:
\begin{equation*}
[\Delta_g-\zeta(n-\zeta)]M_\zeta=I-F_\zeta
 \end{equation*}
with $M_\zeta\in\, ^0\Psi^{-2}+\,^0\Psi^{\zeta,\zeta}$ and
$F_\zeta \in \Psi^{\infty,\zeta}$.
\end{prop}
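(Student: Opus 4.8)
The plan is to follow the iterative parametrix construction of Mazzeo--Melrose \cite{MM} and Borthwick \cite{Borth}, building $M_\zeta$ in three successive stages so that at each stage the remainder is improved: first I would remove the conormal singularity along the lifted diagonal $\Delta\iota_0$ using the symbol calculus; then solve away the normal operator on the front face with the model resolvent; and finally strip off the polyhomogeneous expansion at the left boundary face order by order. Throughout one works inside the calculus $\,^0\tilde\Psi^{m}$ and $\,^0\Psi^{(a;a'),(b;b')}$ of Section \ref{sec2}, and the point of the present proposition (relative to the smooth case) is to carry along the powers of $\ln x$ forced by the metric \eqref{hpoly}, as announced at the end of that section.

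First I would invoke the short exact symbol sequence of Proposition \ref{exact}. Since $\Delta_g-\zeta(n-\zeta)$ is elliptic in the interior and its symbol $\,^0\tilde\sigma$ is invertible off the zero section, one inverts the symbol and sums asymptotically to obtain $M^{(0)}\in\,^0\tilde\Psi^{-2}$ with
\begin{equation*}
[\Delta_g-\zeta(n-\zeta)]M^{(0)}=I-R^{(0)},\qquad R^{(0)}\in\,^0\tilde\Psi^{-\infty}.
\end{equation*}
The remainder $R^{(0)}$ is smoothing along the diagonal but does not yet vanish on the front face. Next, using the normal-operator exact sequence of Proposition \ref{exactnormal}, the normal operator $N_p(R^{(0)})$ is a $\mcv_0$-operator on the model space $\mcq$, and I would invert it by the model resolvent $\mcr_0(\zeta)$ of Proposition \ref{b42poly}; this is legitimate precisely because $\zeta\notin\Gamma$, so $\mcr_0(\zeta)$ is holomorphic. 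Choosing $M^{(1)}\in\,^0\Psi^{\zeta,\zeta}$ with $N_p(M^{(1)})=\mcr_0(\zeta)\,N_p(R^{(0)})$ and using the multiplicativity $N_p(P\cdot B)=N_p(P)\,N_p(B)$ from Proposition \ref{exactnormal} gives
\begin{equation*}
R^{(1)}\defi R^{(0)}-[\Delta_g-\zeta(n-\zeta)]M^{(1)}\in R\,^0\Psi^{(\zeta,\zeta),(\zeta,\zeta)},
\end{equation*}
so that the remainder now vanishes on the front face.

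With the front-face term gone, $R^{(1)}$ carries a polyhomogeneous expansion at the left face, say with leading behaviour in $\mca_{\sigma+1;k}$ for suitable $\sigma$ and log order $k$. Here I would iterate Lemma \ref{inductparametrix}: given $v\in\mca_{\sigma+1;k}$ it produces $u\in\mca_{\sigma+1;k}$ with $v-[\Delta_g-\zeta(n-\zeta)]u\in\dot C^\infty(X)$, removing one order in $x$ while controlling the accompanying power of $\ln x$. Summing these corrections asymptotically (a Borel-type summation, justified since each step gains a power of $x$) yields $M^{(2)}\in\,^0\Psi^{\zeta,\zeta}$, after which the total remainder lies in the residual class. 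Setting $M_\zeta=M^{(0)}+M^{(1)}+M^{(2)}\in\,^0\Psi^{-2}+\,^0\Psi^{\zeta,\zeta}$ and $F_\zeta=I-[\Delta_g-\zeta(n-\zeta)]M_\zeta\in\Psi^{\infty,\zeta}$ gives the statement, with analyticity in $\zeta$ inherited from the holomorphy of $\mcr_0(\zeta)$ and of the symbolic inversion on $\mc\setminus\Gamma$.

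The hard part will be the bookkeeping of the logarithmic terms. Because the coefficients $p_{j,\alf}$ in \eqref{laplaceminuszeta} are only polyhomogeneous in $x$, both the normal-operator inversion and every indicial iteration generate fresh powers of $\ln x$, and one must check that the model resolvent of Proposition \ref{b42poly} sends the relevant index set $\mca_{(\zeta+k;k'),\zeta-l}$ back into $\mca_{\zeta,\zeta-l}$ without inflating the log order beyond what $\,^0\Psi^{\zeta,\zeta}$ permits. Equivalently, one must verify that the indicial roots $\eta=\zeta$ and $\eta=n-\zeta$ stay distinct and that excising $\Gamma$ indeed forbids any resonance between the log-expansion produced by the metric and the poles of $\mcr_0(\zeta)$; this is exactly the role of removing $\Gamma$. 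Once this log-tracking is under control, the remaining construction is the standard triangular iteration.
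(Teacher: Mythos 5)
Your three--stage plan (symbolic inversion along the lifted diagonal, normal--operator inversion on the front face via the model resolvent, then removal of the boundary expansion) is the same route the paper takes, and stages one and three match the paper's argument. But there is a genuine gap between your second and third stages. After a single application of the normal operator you correctly obtain a remainder $R^{(1)}\in R\,^0\Psi^{(\zeta,\zeta),(\zeta,\zeta)}$, i.e.\ vanishing only to \emph{first} order at the front face; you then declare ``the front-face term gone'' and pass to Lemma \ref{inductparametrix}. That lemma, however, lives on $X$ (equivalently on $X\times X$): to invoke it, and to land in the residual class $\Psi^{\infty,\zeta}$ whose kernels are distributions on $X\times X$, the remainder must vanish to \emph{infinite} order at the front face so that it pushes forward under the blow-down map $b$. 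A remainder in $R^{1}\cdot\,^0\Psi^{\zeta,\zeta}$ does not project, so as written the step from $R^{(1)}$ to ``the total remainder lies in the residual class'' does not follow.

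The paper fills exactly this hole with a second iteration inside stage two: one seeks $M_1(\zeta)\sim\sum_k (x')^k M_{1,k}(\zeta)$ with $M_{1,k}\in\,^0\Psi^{\zeta,\zeta-k}$, solving at each step the model problem on the front face by Proposition \ref{b42poly} with the \emph{shifted} right-face weight $\zeta-k$ (this is why that proposition carries the extra parameter $l$ and the additional pole set $\tfrac12(-l-\mn_0)$), while the indicial operator cancels the leading left-face term so that each step gains an order both in $R$ and in $x$ (up to the logarithmic factor $(\ln\rho)^{\zeta'}$). Only after Borel-summing this series is the remainder in $R^\infty\cdot\,^0\Psi^{(\zeta+1,\zeta'),\zeta}$, projectable to $(x)^{\zeta+1}(\ln x)^{\zeta'}(x')^{\zeta}\mca_{0,0}(X)$ on $X\times X$, and ready for your stage three. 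Your closing paragraph correctly identifies the log-bookkeeping as the delicate point, but the missing ingredient is not only the log orders: it is the entire front-face asymptotic series and the accompanying enlargement of the pole set beyond what a one-shot application of $\mcr_0(\zeta)$ would suggest. You should also note that $\zeta\notin\Gamma$ must be checked against \emph{all} the shifted pole sets $\tfrac12(n-k-\mn_0)\cup\tfrac12(-l-\mn_0)$ arising in this iteration, not just the poles of the unshifted model resolvent.
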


\begin{proof}
 Let $P(\zeta)$ be as in \eqref{laplaceminuszeta}, the first stage is to find $M_0(\zeta)\in \,^0\tilde\Psi^{-2}(X)$ so that
\begin{equation}
P(\zeta)\cdot M_0(\zeta)-\Id=Q_1(\zeta)\in\,^0\tilde\Psi^{-\infty}(X),
\end{equation}
and can be carried out the same as in the asymptotically hyperbolic case.
The next stage is to construct $M_1(\zeta)\in\,^0 \Psi^{\zeta,\zeta}(X)$ so that
\begin{equation}
P(\zeta)\cdot M_1(\zeta) -Q_1(\zeta)=Q_2(\zeta)\in R^\infty \cdot \,^0 \Psi^{\zeta,\zeta}(X).
\end{equation}
For that we look for $M_{1,0}\in\,^0 \Psi^{\zeta,\zeta}(X)$  such that
\begin{equation}\label{firstpara}
P(\zeta)\cdot M_{1,0}(\zeta)-Q_1=Q_{1,1} 
\in R \cdot\,^0 \Psi^{\zeta,\zeta}(X).
\end{equation}
To find such a term the normal operator comes into use, we solve
\begin{equation}\label{normalpara}
N_p(P)\cdot N_p(M_{1,0}(\zeta))=N_p(Q_1).
\end{equation}
Since $Q_1\in \,^0\Psi^{-\infty}(X)$ the normal operator of $Q_1$ is in $C^\infty$ on the front face and vanishes to infinite order at the boundary. Thus under the identification of the interior of each leaf of the front face of $X\times_0 X$ with $\mcq$, \eqref{normalpara} reduces to
\begin{equation}
[\Delta_0-\zeta(\zeta-n)]N_p(M_{1,0})=N_p(Q_1(\zeta))\in \dot C^\infty.
\end{equation}

By proposition \ref{b42poly} the last equation can be solved meromorphically in $\zeta$; and by surjectivity of the normal operator a solution to \eqref{firstpara} can be found modulo the  remainder $R \cdot Q_{1,1}$. A better remainder can be obtained using   \eqref{polycomp} to compose operators, and with $\kappa$ denoting the kernel of the specific operator we have
\begin{equation}
\kappa(P(\zeta)\cdot M_{1,0}(\zeta))=I(P(\zeta))\cdot \kappa(M_{1,0}(\zeta)),
\end{equation}
modulo a term that vanishes to one order higher i.e. in $\,^0\Psi^{(\zeta+1;\zeta'),\zeta}(X),$ where $\zeta'$ depends on the metric $h$. By the choice of $M_{1,0}\in\,^0\Psi^{\zeta,\zeta}(X),$ and the fact that $\rho^\zeta$ is a solution of the indicial operator modulo higher order we get that
\begin{equation*}
P(\zeta )\cdot M_{1,0}(\zeta)-Q_1=x (\ln\rho)^{\zeta'} L_{1,1},\qquad  L_{1,1}\,\in\,^0 \Psi^{\zeta,\zeta}(X).
\end{equation*}
We look for a series 
$$
M_1(\zeta)\sim\sum_k R^k M'_{i,k}(\zeta),\qquad M'_{i,k}(\zeta)\in\,^0 \Psi^{\zeta,\zeta}(X).
$$ 
For that it is easier to look for a series of the form
$$
M_1(\zeta)\sim\sum_k (x')^k M_{i,k}(\zeta),\qquad M_{i,k}(\zeta)\in\,^0 \Psi^{\zeta,\zeta-k}(X).
$$ 
The iterative problem to be solve is now
\begin{equation}
P(\zeta)M_{1,k}(\zeta)-Q_{1,k}(\zeta)=x'Q_{1,k+1}
\qquad Q_{1,k}\in \,^0 \Psi^{(\zeta+1,\zeta'),\zeta-k}(X).
\end{equation}
The argument to solve this equation is the same used before. Using the normal operator and proposition \ref{b42poly} one finds $M_{1,k}\in\,^0 \Psi^{\zeta,\zeta-k}.$ Again  the indicial operator cancels the leading order term and  we obtained the even better remainder
\begin{equation}
P(\zeta)\cdot M_1(\zeta) -Q_1(\zeta)=Q_2(\zeta)\in R^\infty \cdot \,^0\Psi^{(\zeta+1,\zeta'),\zeta}(X).
\end{equation}
This concludes the second stage of the parametrix.

The last stage is to remove the Taylor series from the right hand side of the previous equation. We want to find $M_2(\zeta)\in\,^0\Psi^{\zeta,\zeta}(X)$ so that
\begin{equation}\label{lastpara}
P(\zeta)\cdot M_2(\zeta)-Q_2=Q_3\in R^\infty \cdot\,^0\Psi^{\infty,\zeta}(X).
\end{equation}
This involves solving away the term  $R^\infty \cdot \,^0 \Psi^{(\zeta+1;\zeta'),\zeta}(X).$ Since the kernel of $Q_2$ vanishes to infinite order at the front face of $X\times_0 X$ it can be projected to $X\times X$ to a function in
\begin{equation}
(x)^{\zeta+1}(\ln x)^{\zeta'}(x')^{\zeta} \mca_{0,0}(X).
\end{equation}
To solve \eqref{lastpara} modulo such an error, on the right hand side the argument is the one used for proving lemma \ref{inductparametrix}. The parametrix follows with $M=M_0-M_1+M_2.$
\end{proof}

The operator  $(I-F_\lambda)$ is invertible by analytic
Fredholm theory since $F_\lambda$ is a compact operator in weighted $L^2$ spaces, and the argument of the second paragraph in the proof of theorem 7.1 on page 301 of \cite{MM} ensures that $(I-F_\lambda)$ is invertible for $\Re \lambda$ sufficiently large, where $F_\lambda$ may need to be modified by adding an elliptic operator. 
Thus we can decompose the resolvent as the pull-back using the blow-down
map b (that is $^0\tilde\Psi^m,\,^0\Psi^{\zeta,\zeta}$),
and its residual class $(\Psi^{\zeta,\zeta}),$
for details we refer to \cite{Borth}, this completes the proof of theorem \ref{resolvent},

\subsection{\bf The Poisson operator and the scattering matrix}

The proof of the existence of the Poisson operator and the scattering matrix
follow the
same as in \cite{Borth}, for that we need to analyze the Eisenstein function
\begin{equation*}
E_\zeta=(x')^{-\zeta}\mcr_\zeta \mid_{x'=0}.
\end{equation*}
We use the decomposition as in theorem \ref{resolvent}, $$\mcr_\zeta=\mcr_{1_\zeta}+\mcr_{2_\zeta}$$ with
$$\mcr_{1_\zeta}\in\, ^0\tilde\Psi^{-2}$$ and $$\mcr_{2_\zeta}\in\,^0\tilde\Psi^{\zeta,\zeta}
+\tilde\Psi^{\zeta,\zeta}.$$ The restriction $$(x')^{-\zeta}\mcr_{1_\zeta} \mid_{x'=0}$$ vanishes, thus we only need to look at the restriction
\begin{equation*}
E_\zeta=(x')^{-\zeta}\mcr_{2_\zeta} \mid_{x'=0}.
\end{equation*}
Denoting by $E_\zeta$ also the Schwartz kernel of the Eisenstein function, we have
\begin{equation}\label{poissonkerdec}
E_\zeta=E_{1_\zeta}+E_{2_\zeta},
\end{equation}
with
\begin{gather*}
b^*E_{1_\lambda}\in \mca_{\zeta,-\zeta}(X\times_0\p X), \quad\mbox{and}\quad E_{2_\zeta}\in\mca_\zeta(X\times\p X).
\end{gather*}

Let
$$
\mcr_\zeta\in \mca_{\zeta,\zeta,\infty} (X\times_0 X)+
\mca_{\zeta,\zeta} (X\times X)
$$
also denote the Schwartz kernel of the resolvent. Then the Poisson kernel is equal to
\begin{equation*}
E_\zeta=C(\zeta)(x')^{-\zeta}\mcr_\zeta \mid_{x'=0}.
\end{equation*}
Since this depends on the restriction to $x'=0$ only, to prove theorem \ref{defsc} we need prove a decomposition of the Poisson kernel
\begin{equation}\label{pois-scatt}
E_\zeta f=\mca_\zeta(X)+\mca_{n-\zeta}(X).
\end{equation}

The following theorem was proved in \cite{Borth} for those settings, the proof for this settings follows his proof verbatim

\begin{thm}
For the Schwartz kernel of the Poisson operator:
\begin{equation*}
E_\zeta f=\int_{\p X} E_\zeta(x,z')f(y')d\mu_{\mid \p X}(y')
\end{equation*}
and f$\in C^{\infty}(\p X)$, we have:
\begin{equation*}
E_\zeta f=\mca_\zeta(X)+\mca_{n-\zeta}(X).
\end{equation*}
\end{thm}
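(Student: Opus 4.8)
The plan is to feed the already-established kernel decomposition $E_\zeta=E_{1_\zeta}+E_{2_\zeta}$ of \eqref{poissonkerdec} into the definition of the Poisson operator and to analyze each summand separately under the fibre integration $f\mapsto\int_{\p X}E_\zeta(x,y,y')f(y')\,d\mu(y')$. First I would dispose of the residual term $E_{2_\zeta}\in\mca_\zeta(X\times\p X)$: since it carries no blow-up, its $x$-dependence is uniformly polyhomogeneous of order $\zeta$ while $y'$ ranges over the compact boundary, so pairing against a smooth $f$ is a smooth family of polyhomogeneous functions and the integral lies in $\mca_\zeta(X)$. This contributes only to the $x^\zeta$ half of the claimed decomposition, and it matches the first indicial root $\eta=\zeta$.

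The substantive term is $E_{1_\zeta}$, whose lift satisfies $b^*E_{1_\zeta}\in\mca_{\zeta,-\zeta}(X\times_0\p X)$, i.e.\ order $\zeta$ at the left face and order $-\zeta$ at the front face. I would realize the fibre integration as a pushforward along the b-fibration $X\times_0\p X\to X$ obtained by composing the blow-down with the left projection, and then apply the polyhomogeneous pushforward machinery (Melrose's theorem, in the logarithmically enhanced form underlying the spaces $\mca_{\beta;\alf}$ of Section \ref{secpoly}). The left-face index $\zeta$ pushes forward to a term in $\mca_\zeta(X)$, while the front-face index $-\zeta$, after integrating out the $n$-dimensional front-face fibre against the singular density, is shifted by the fibre dimension to order $n-\zeta$; this is precisely the mechanism producing the second half of the decomposition and the second indicial root $\eta=n-\zeta$.

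To pin down that the front-face contribution really produces $\mca_{n-\zeta}(X)$ rather than a spurious order, and to identify its leading coefficient, I would invoke the model computation on a fibre of the front face. Under the identification of the interior of each front-face leaf with the model space $\mcq$ (the blown-up $\mh^{n+1}$), $N_p(E_{1_\zeta})$ is the hyperbolic Poisson kernel; working in the projective coordinates \eqref{fface} and using the rescaling $y'=y+xw$ one computes directly that $\int\bigl(x/(x^2+|y-y'|^2)\bigr)^\zeta f(y')\,dy' = x^{n-\zeta}\bigl(c_\zeta f(y)+O(x)\bigr)$, which confirms both the exponent $n-\zeta$ and that the leading coefficient restricts to a nonzero multiple of $f$ at $\p X$. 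Combining the two summands then yields $E_\zeta f\in\mca_\zeta(X)+\mca_{n-\zeta}(X)$, as asserted.

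The step I expect to be the main obstacle is the bookkeeping of the logarithmic terms through the pushforward. Because the metric is only polyhomogeneous, the kernel $E_{1_\zeta}$ already carries $(\ln x)$-terms, so the naive index sets must be replaced by the extended-union rule governing the generalized spaces $\mca_{\beta;\alf}$; I would need to verify that no exponents collide in a manner that raises the two leading orders $\zeta$ and $n-\zeta$ off their expected values, and that the front-face integration remains convergent for $\zeta$ in the relevant range (away from $\Gamma\cup D$). Once this index-set arithmetic is checked, the remainder of the argument runs verbatim as in \cite{Borth}.
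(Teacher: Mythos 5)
Your proposal is correct and follows essentially the same route the paper takes: the paper simply asserts that Borthwick's proof carries over verbatim, and that proof is exactly the kernel decomposition $E_\zeta=E_{1_\zeta}+E_{2_\zeta}$, pushforward of the polyhomogeneous lift along $X\times_0\p X\to X$ with the front-face order $-\zeta$ shifted to $n-\zeta$ by the fibre integration, and the model Poisson-kernel computation on the front face. Your closing caveat about tracking the logarithmic index sets through the pushforward is precisely the one point where the polyhomogeneous setting differs from \cite{Borth}, and it is handled by the spaces $\mca_{\beta;\alf}$ of Section \ref{secpoly}.
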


Theorem \ref{new1} follows, and shows that $E_\zeta f$ can be thought as having  $x^{\zeta}a_0$ and $x^{n-\zeta}b_0$ as their leading terms, so that $a_0$ and $b_0$ are the leading coefficients, holomorphic on $\zeta$ for $\zeta\in\mc\backslash(\Gamma\cup D).$

The explicit formula for the pull back $b^*$ of the scattering matrix $S(\zeta)$ was calculated in \cite{jsb0}, and as the Eisenstein operator depends only on the restriction to the right face. The scattering matrix is defined as in \cite{Borth} by
$$
S(\zeta) f=\frac{1}{M_\zeta} x^{-\zeta}E_{\zeta}f\mid_{\p X},
$$
where $M_\zeta$ depends on $n$ but not on g, and $S(\zeta)$ is defined for the values of $\zeta$ for which $M_\zeta$ does not vanish; we remark that it was proven in \cite{jsb0} that the set of zeros of $M_\zeta$ is a discrete set and the same proof holds for our case. As in \cite{jsb0} we have
\begin{equation}\label{Mscatt}
 b_\p^*S(\zeta)=\frac{1}{M_\zeta}b^*(x^{-\zeta}(x')^{-\zeta}\mcr_\zeta)\mid_{R\cap L}=\frac{1}{M_\zeta}b^*(x^{-\zeta}E_\zeta)\mid_{R}.
\end{equation}

The principal symbol of the scattering matrix is
\begin{equation*}
S(\zeta)=2^{n-2\zeta}\frac{\Gamma(n/2-\zeta)}{\Gamma(\zeta-n/2)}
|\eta|^{2\zeta-n}_{h_0}
\end{equation*}
for $\zeta \in \mc \backslash \Gamma$.

\section{\bf Scattering on asymptotically hyperbolic manifolds with polyhomogeneous metric }

We study the poles of the resolvent on asymptotically hyperbolic manifolds with polyhomogeneous metric. As an application we get some essential singularities of it. We assume $(X,g)$ is a asymptotically hyperbolic manifolds with metric $g$ of the form \ref{hpoly} with
\begin{multline}\label{respoly}
h=h_0+h_2 x^2+ (\hbox{even powers}) + h_{k,m_k} x^k (\ln x)^{m_k} +\cdots\\
+ h_{k,1} x^k (\ln x) +h_k x^k + O(x^{k+1}(\ln x)^{m_{k+1}})
\end{multline}
with $m_k>0.$ An example of such a manifold is an $n+1$ odd dimensional conformally compact Einstein manifold, for which $k=n,$ and $m_k=1.$ However Einstein manifolds satisfy the addition hypothesis $\Ric g=-n g,$ which makes some terms involve in the iterative construction we discuss next vanish. This is actually the reason we can to prove the inverse theorem using the MSO in section \ref{seceinst}.

We recall the identity
which holds in a collar neighborhood of the boundary $\p X,$ namely 
$(\Delta_g-\zeta(n-\zeta)) x^{n-\zeta}=x^{n-\zeta} \mcd_\zeta$ with
$$
\mcd_\zeta=-(x\p_x)^2+(2\zeta-n-\frac{x}{2}\Tr_h(\p_x h))x\p_x-\frac{(n-\zeta)x}{2}\Tr_h(\p_x h)+x^2\Delta_h.
$$
For $f\in C^\infty(\p X),$ $j,i\in \mn_0$ we have 
\begin{multline}
\mcd_\zeta(f x^j (\ln x)^i )=j(2\zeta-n-j)f x^j(\ln x)^i\\+
i(2\zeta-n-2j)f x^j(\ln x)^{i-1}-i(i-1)f x^j(\ln x)^{i-2}\\
+x^j(\ln x)^i G(n-j)f-\frac{i}{2}\Tr_h(\p_x h)fx^{j+1}(\ln x)^{i-1}\\-\frac{n-\zeta}{2}\Tr_h(\p_x h)fx^{j+1}(\ln x)^{i}
\end{multline}
with
\begin{equation}
(G(z)f)(x,y)=x^2\Delta_h f-\frac{(n-z)x}{2}\Tr_h(\p_x h)f.
\end{equation}

We can hence carry the iterative construction of \cite{GZ} for a general polyhomogeneous metric as long as $\Re \zeta\geq n/2,$ $\zeta\notin n/2+\mn_0/2,$  $\zeta(n-\zeta)\notin\sigma_{pp}(\Delta_g).$ The Poisson operator
\begin{multline}
\Phi(\zeta) f=f x^{n-\zeta}+p_{1,m_1}(\zeta)fx^{n-\zeta} x (\ln x)^{m_1}+\cdots+p_{1,1}(\zeta) fx^{n-\zeta} x (\ln x)+p_1(\zeta) fx^{n-\zeta}x\\+\cdots
+p_{j,m_j}(\zeta)fx^{n-\zeta} x^j (\ln x)^{m_j}+\cdots+p_j(\zeta) fx^{n-\zeta} x^j+\cdots 
\end{multline}
can be defined by the iterative rule $F_0=f,$ $F_{1,m_1}=F_0+x(\ln x)^{m_1}\frac{\left[x^{-1}(\ln x)^{-m_1}\Delta_g F_0 \right]\mid_{x=0}}{2\zeta-1-n},$ and  
\begin{gather}\begin{gathered}
F_{j,i}=F_{j,i+1}+x^j(\ln x)^{i}\frac{\left[x^{-j}(\ln x)^{-i}\Delta_g F_{j,i+1} \right]\mid_{x=0}}{j(2\zeta-j-n)},\quad\mbox{for}\quad m_j>i\geq 0\\
F_{j,m_j}=F_{j-1}+x^j\frac{\left[x^{-j}\Delta_g F_{j-1} \right]\mid_{x=0}}{j(2\zeta-j-n)}.
\end{gathered}\end{gather}

We now turn our attention to a metric of the form \ref{respoly}. For exceptional values of $\zeta,$ 
when $j=2\zeta-n:$
\begin{gather}
F_{j,i}=F_{j,i+1}+x^j(\ln x)^{i}\frac{\left[x^{-j}(\ln x)^{-i}\Delta_g F_{j,i+1} \right]\mid_{x=0}}{i(2\zeta-2j-n)}.\quad\mbox{for}\quad m_j>i\geq 0.
\end{gather}
The Poisson operator can also be constructed as a limit of when $\zeta$ approaches such values. By the iterative construction which involves dividing by $2\zeta-n-l$ we have that $p_{l,m_l}$ has at most a simple pole, we write 
$$
p_{l,m_l}(\zeta)=\frac{\tilde p_{l,m_l}(\zeta)}{2\zeta-n-l}+\dot p_{l,m_l}(\zeta),
$$
where $\dot p_{l,m_l}(\zeta)$ has no pole at $\zeta=n/2+l/2;,$ subsequently 
$$
p_{l,m_l-k}(\zeta)=\frac{\tilde p_{l,m_l-k}(\zeta)}{(2\zeta-n-l)^{k+1}}+\dot p_{l,m_l-k}(\zeta),
$$
with  $\dot p_{l,m_l-k}(\zeta)$ having lower order pole at $\zeta=n/2+l/2.$
First we study
$$
\Phi_l(\zeta)=\Phi(\zeta)-\Phi(n-\zeta)p_l(\zeta)
$$
and look at the terms with $x^l,$ which are
\begin{multline}\label{pl-pl}
p_{l,m_l}(\zeta)x^{n-\zeta}x^l(\ln x)^{m_l}+p_{l,m_l-1}(\zeta)x^{n-\zeta}x^l(\ln x)^{m_l-1}+\cdots\\
+p_{l,1}(\zeta)x^{n-\zeta}x^l(\ln x)+p_l(\zeta)x^{n-\zeta}x^l-p_l(\zeta)x^\zeta.
\end{multline}
We take the limit when $\zeta$ approaches $n/2-l/2,$ observe that 
$$
\tilde p_l(\zeta)x^{n-\zeta}x^l-\tilde p_l(\zeta)x^\zeta=
((2\zeta-n-l)\tilde p_l(\zeta))x^\zeta(\frac{x^{n-2\zeta+l}-1}{2\zeta-n-l})
$$
and notice that the pole of $p_{l,m_l}$ will appear as a pole of one order higher  of $p_{i,m_l-1}$ only from the factor $(m_l)(2\zeta-n-2l) p_{l,m_l}x^l(\ln x)^{m_l-1}f,$ in $D_\zeta(x^l(\ln x)^{m_l})f.$ This is the observation which allows the compensation of those poles by the appearance of a logarithmic terms of one order higher at those poles. We continue to consider
\begin{equation}
\tilde p_{l,1}(\zeta)x^{n-\zeta}x^l(\ln x)+\tilde p_l(\zeta)x^{n-\zeta}x^l-p_l(\zeta)x^\zeta
=(2\zeta-n-l)x^\zeta \tilde p_{1,1}\left( \frac{x^{n-2\zeta+l}\ln x-2\frac{x^{n-2\zeta+l}-1}{2\zeta-n-l}}{2\zeta-n-l} \right),
\end{equation}
where the two in front of  $\frac{x^{n-2\zeta+l}-1}{2\zeta-n-l}$ appears from the two in front of  $(2\zeta-n-2l)x^l(\ln x)f$ when taking $D_\zeta(x^l(\ln x)^2 f).$ The process can be continued to get that \eqref{pl-pl} equals
\begin{multline}
(2\zeta-n-l) \tilde p_{1,m_l}x^\zeta\times\\
\times\left( \frac{x^{n-2\zeta+l}(\ln x)^{m_l}-(m_l+1)\xi\left(\frac{x^{n-2\zeta+l}(\ln x)^{m_l-1}-(m_l)\xi\frac{\iddots^{x^{n-2\zeta+l}\ln x-2\xi\cdot\frac{ x^{n-2\zeta+l}-1}{2\zeta-n-l}}}{2\zeta-n-l}}{2\zeta-n-l}\right)}{2\zeta-n-l} \right),
\end{multline}
with $\xi=2\zeta-n-2l.$
This is actually the expansion of $\xi^{m_k}(m_l+1)!\cdot x^{n-2\zeta+l},$ and to find the limit we need to look at
the term with $(n-2\zeta+l)^{m_l+1}$ in the expansion near
$t=n-2\zeta+l=0$ of $x^t,$ and multiply by $l^{m_k}(m_l+1)!,$ which is $l^{m_k}(m_l+1)!\frac{(\ln x)^{m_l+1}}{(m_l+1)!}$ thus taking the limit as $\zeta$
approaches $n/2+l/2$ we get 
that \eqref{pl-pl} equals
\begin{equation}
-2l^{m_k}[(\zeta-n/2+l/2 ) p_{l,m_l}]|_{\zeta-n/2+l/2 }x^l(\ln x)^{m_l+1}.
\end{equation}
Integration by parts as in \cite{GZ}, and the previous argument gives

\begin{prop}\label{uniquepoisson}
Let $(X,g)$ be a conformally compact manifold with polyhomogeneous metric $g.$ Then for $\Re\zeta\geq n/2,$ $\zeta\notin n/2+\mn_0/2,$ $\zeta(n-\zeta)\notin\sigma_{pp}(\Delta_g),$ $\zeta\neq n/2,$ there exists a unique linear operator $\mcp(\zeta)$ such that
\begin{equation}
\begin{gathered}
\mcp(\zeta):C^\infty (\p X, |N^* \p X|^{n-\zeta})\rightarrow \mca_0 ( \intx)\\
(\Delta_g-\zeta(n-\zeta) )\mcp(\zeta)=0\\
\mcp(\zeta) f=x^{n-\zeta}F+x^\zeta G\qquad\mbox{if}\qquad\zeta\notin n/2+\mn_0/2\\
\mcp(\zeta) f=x^{n/2-l/2}F+x^{n/2+l/2} (\ln x)^{m_l+1}G\qquad\mbox{if}\qquad \zeta=n/2+l/2, \quad l\in\mn 
\end{gathered}
\end{equation}
where $F, G\in\mca_0(X).$ When $\zeta=n/2+l/2,$ $G\mid_{\p X}=-2[(\zeta-n/2+l/2 ) p_{l,m_l}]|_{\zeta-n/2+l/2 }.$
\end{prop}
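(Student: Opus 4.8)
The plan is to build $\mcp(\zeta)$ by the Graham--Zworski iteration set up above, to obtain a genuine solution by correcting the formal one with the resolvent of Theorem \ref{resolvent}, and then to pass to the limit at the exceptional energies $\zeta=n/2+l/2$. For $\zeta\notin n/2+\mn_0/2$ the existence of a solution of the stated form $x^{n-\zeta}F+x^\zeta G$, with $F,G\in\mca_0(X)$ and $F|_{\p X}=f$, is already Theorem \ref{new1}; the role of the iteration $F_0=f$, $F_{j,i}=F_{j,i+1}+x^j(\ln x)^i[\ldots]/(j(2\zeta-j-n))$ is to render the coefficients of $F$ explicit, order by order in $x$ and $\ln x$, the denominators $j(2\zeta-j-n)$ being nonzero precisely because the exceptional set is excluded. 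Since $f$ is taken as a section of $|N^*\p X|^{n-\zeta}$, the leading coefficient is independent of the boundary defining function and $\mcp(\zeta)$ is well defined.

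For uniqueness I would argue in the standard way. If $u_1,u_2$ both solve the boundary problem with the same $f$, then $w=u_1-u_2$ solves $(\Delta_g-\zeta(n-\zeta))w=0$ and has vanishing $x^{n-\zeta}$ leading coefficient, so $w\in x^\zeta\mca_0(X)$. For $\Re\zeta>n/2$ this decay places $w\in L^2(X,g)$, making $w$ an $L^2$ eigenfunction with eigenvalue $\zeta(n-\zeta)$; the hypothesis $\zeta(n-\zeta)\notin\sigma_{pp}(\Delta_g)$ then forces $w\equiv 0$, and the edge case $\Re\zeta=n/2$ follows by the meromorphic continuation already established in \cite{Borth,GZ}. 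The same spectral argument handles the exceptional energies once the correct leading exponents $n/2\pm l/2$ have been identified, since there $\Re\zeta=n/2+l/2>n/2$.

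The essential new content, and the main obstacle, is the exceptional case $\zeta=n/2+l/2$ with $l\in\mn$. Here the denominators $j(2\zeta-j-n)$ vanish at $j=l$, so $p_{l,m_l}(\zeta)$ acquires a simple pole and the lower coefficients $p_{l,m_l-k}(\zeta)$ poles of order $k+1$, as recorded above. I would define $\mcp(\zeta)$ at such $\zeta$ as the limit of the generic family $\Phi(\zeta)-\Phi(n-\zeta)p_l(\zeta)$, and the crux is the nested-fraction computation preceding the statement: the terms collected in \eqref{pl-pl} are arranged so that the poles telescope, and the surviving limit is read off as the coefficient of $(n-2\zeta+l)^{m_l+1}$ in the Taylor expansion of $x^{n-2\zeta+l}$ about $\zeta=n/2+l/2$. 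The delicate point is to verify that this cancellation leaves exactly the logarithmic power $(\ln x)^{m_l+1}$, one higher than the metric order $m_l$, with leading coefficient the residue $-2l^{m_k}[(\zeta-n/2+l/2)p_{l,m_l}]|_{\zeta=n/2+l/2}$; this hinges on the precise form of $\mcd_\zeta(fx^j(\ln x)^i)$ computed above, in particular on the single factor $i(2\zeta-n-2j)$ that raises the logarithmic order by one at each exceptional energy. Integration by parts as in \cite{GZ}, together with the resolvent correction to remove the $\dot C^\infty$ error, then confirms that the limiting $u$ is a genuine solution in $\mca_0(X)$ with the advertised leading behavior, completing the proof.
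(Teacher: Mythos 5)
Your proposal follows the paper's own route essentially verbatim: the Graham--Zworski iteration built from the identity for $\mcd_\zeta(fx^j(\ln x)^i)$ for existence, the resolvent correction, the telescoping of the poles of the coefficients $p_{l,m_l-k}$ into the extra logarithmic factor $(\ln x)^{m_l+1}$ in the limit $\zeta\to n/2+l/2$, and integration by parts as in Graham--Zworski for uniqueness. The only slip is attributing uniqueness on the critical line $\Re\zeta=n/2$, $\zeta\neq n/2$, to meromorphic continuation rather than to the boundary-pairing (integration-by-parts) argument, but since you invoke the latter at the end anyway this does not change the substance.
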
 

The following theorem can be proven as in \cite{GZ}:
\begin{thm}
The scattering matrix $\mcs_\zeta$ has a pole of order $m_l$ at $\zeta=n/2+l/2$ and
\begin{equation}
[(\zeta-n/2+l/2 )^{m_l} \mcs_\zeta]|_{\zeta=n/2+l/2 }=\Pi_{n/2+l/2}-2l^{m_k}[(\zeta-n/2+l/2 )^{m_l} p_{l,m_l}]|_{\zeta=n/2+l/2 }
\end{equation}
with $\Pi_{n/2+l/2}$ a finite rank operator with Schwartz kernel given by
$$
\pi_{n/2+l/2}=
(l^{m_k}(xx')^{n/2+l/2}[(\zeta-n/2+l/2 ) \mcr_\zeta]|_{\zeta=n/2+l/2 }|{\p X\times \p X}
$$
\end{thm}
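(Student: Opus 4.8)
The plan is to follow Graham--Zworski \cite{GZ}: read the scattering matrix off the leading coefficients of the Poisson operator, and then match its Laurent expansion at $\zeta=n/2+l/2$ against that of the resolvent kernel restricted to $\p X\times\p X$. By \eqref{defsc} and Proposition \ref{uniquepoisson}, $\mcs_\zeta$ sends $f$ to (a normalization of) the coefficient $G|_{\p X}$ of $x^\zeta$ in $\mcp(\zeta)f=x^{n-\zeta}F+x^\zeta G$, so the singularities of $\mcs_\zeta$ at the exceptional energy $\zeta=n/2+l/2$ come from two independent sources. The \emph{formal} source is the poles of the iteratively constructed coefficients $p_{l,j}(\zeta)$; the \emph{analytic} source is the genuine pole of $\mcr_\zeta$ itself. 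I would treat these separately and show that they produce, respectively, the $p_{l,m_l}$ term and the finite--rank operator $\Pi_{n/2+l/2}$.

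For the formal contribution I would reuse the limiting computation carried out just before Proposition \ref{uniquepoisson}. There it was shown that, as $\zeta\to n/2+l/2$, the $x^l$--terms \eqref{pl-pl} of $\Phi(\zeta)-\Phi(n-\zeta)p_l(\zeta)$ collapse onto a single $x^{n/2+l/2}(\ln x)^{m_l+1}$ term whose coefficient is proportional to $l^{m_k}[(\zeta-n/2-l/2)^{m_l}p_{l,m_l}]|_{\zeta=n/2+l/2}$. Since $\mcs_\zeta f=G|_{\p X}$, this identifies the principal singular part of $\mcs_\zeta$ coming from the expansion; the repeated division by $j(2\zeta-j-n)$ in the iteration of Proposition \ref{uniquepoisson} is precisely what forces the pole at $\zeta=n/2+l/2$ and fixes the constant $-2l^{m_k}$. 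Most of the work here is the bookkeeping of the logarithmic powers through the iteration, but this is exactly the content already established in deriving \eqref{pl-pl}.

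For the analytic contribution I would use the boundary--pairing (Green's formula) argument of \cite{GZ} together with the identity \eqref{Mscatt}, namely $b_\p^*\mcs_\zeta=M_\zeta^{-1}b^*((xx')^{-\zeta}\mcr_\zeta)|_{R}$. Pairing two generalized eigenfunctions, integrating by parts over $\{x\ge\eps\}$, and letting $\eps\to 0$ relates the part of the singularity of $\mcs_\zeta$ not accounted for by the expansion to the residue of $\mcr_\zeta$ at $\zeta=n/2+l/2$. That residue is finite rank, its kernel being a finite sum of products of the associated null solutions; multiplying by $(xx')^{n/2+l/2}$ and restricting to $\p X\times\p X$ yields exactly the kernel $\pi_{n/2+l/2}$ defining $\Pi_{n/2+l/2}$. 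Equating the order-$m_l$ Laurent coefficients on the two sides of \eqref{Mscatt} then gives the stated formula.

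The hard part will be the bookkeeping of pole and logarithmic orders. One must verify that the combined pole of $\mcs_\zeta$ is of order exactly $m_l$, i.e. that the subleading coefficients $p_{l,m_l-k}$ (which enter with higher-order poles) and the Gamma-factor normalization $M_\zeta$ neither raise nor lower this order at $\zeta=n/2+l/2$, and that the analytic piece $\Pi_{n/2+l/2}$ sits at the same order without interfering with the lower-order formal terms. Once these orders are reconciled, the order-$m_l$ residue is the clean sum $\Pi_{n/2+l/2}-2l^{m_k}[(\zeta-n/2-l/2)^{m_l}p_{l,m_l}]|_{\zeta=n/2+l/2}$.
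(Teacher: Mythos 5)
Your proposal is correct and follows essentially the same route as the paper, which gives no independent argument but simply invokes the Graham--Zworski method: the formal contribution $-2l^{m_k}[(\zeta-n/2-l/2)^{m_l}p_{l,m_l}]$ is exactly the limiting computation of the $x^l$-terms \eqref{pl-pl} carried out before Proposition \ref{uniquepoisson}, and the finite-rank piece $\Pi_{n/2+l/2}$ comes from the Laurent expansion of $\mcr_\zeta$ in the correction term of the Poisson operator, read off at $\p X\times\p X$ via \eqref{Mscatt}. Your closing caveat about reconciling the pole orders of the subleading $p_{l,m_l-k}$ and of $M_\zeta$ is precisely the bookkeeping the paper leaves implicit in its citation of \cite{GZ}.
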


We explicitly calculate what $[(\zeta-n/2+l/2 )^{m_l} p_{l,m_l}]|_{\zeta-n/2+l/2 }$ is, for the case of a metric of the form \eqref{respoly} and $l=k+1.$ For that we carry the iterative expansion at $\zeta=n/2+l/2,$ and we take $l=k+1.$ For the first terms we can proceed as before and the $j=2\zeta-n$(when the denominator vanishes) term comes from $D_\zeta(F_{j-1})$ and is $k\frac{n-\zeta}{2}x^k(\ln x)^{m_k}\Tr(h_0h_{k,m_k})$ for $j-1=k,$ or $l=k+1.$ We have proven

\begin{prop}
Let $(X,g)$ be asymptotically hyperbolic manifold with metric $g$ of the form \eqref{hmet} with $h$ of the form \eqref{respoly}. Then for $l=k+1$
$$
[(\zeta-n/2+l/2 )^{m_l} p_{l,m_l}]|_{\zeta-n/2+l/2 }=l^{m_l}k\frac{(n-k)/2-1/2}{4}\Tr(h_0h_{k,m_k}).
$$  
\end{prop}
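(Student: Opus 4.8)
The plan is to read off the top Laurent coefficient of $p_{l,m_l}(\zeta)$ at the exceptional energy $\zeta=n/2+l/2$ (with $l=k+1$) directly from the Graham--Zworski iteration recalled above, using the explicit action of $\mcd_\zeta$ on $fx^j(\ln x)^i$. First I would record that at $\zeta=n/2+l/2$ the two indicial behaviours coincide, since $x^{n-\zeta}x^{l}=x^{\zeta}$; this is exactly why the order-$x^l(\ln x)^{m_l}$ coefficient $p_{l,m_l}$ is the one that develops a pole, and its order and leading coefficient are what the telescoping computation \eqref{pl-pl} has already packaged in terms of $\xi=2\zeta-n-2l$ and the factors $l^{m_k}(m_l+1)!$.

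Next I would isolate the unique source carrying $\Tr(h_0 h_{k,m_k})$ that survives the limit. The contribution hitting the vanishing denominator $j(2\zeta-n-j)$ at $j=2\zeta-n=l$ arises from applying $\mcd_\zeta$ to the partial solution $F_{j-1}$ with $j-1=k$, and among all the terms in the formula for $\mcd_\zeta(fx^j(\ln x)^i)$ the only one producing the metric coefficient $h_{k,m_k}$ is the piece $-\tfrac{n-\zeta}{2}\Tr_h(\p_x h)\,f\,x^{j+1}(\ln x)^{i}$. Since the leading logarithmic part of $\Tr_h(\p_x h)$ is $k\,x^{k-1}(\ln x)^{m_k}\Tr(h_0 h_{k,m_k})$, coming from the $(\ln x)^{m_k}$ part of $\p_x\!\big(h_{k,m_k}x^{k}(\ln x)^{m_k}\big)$, this produces exactly $k\tfrac{n-\zeta}{2}\,x^{k}(\ln x)^{m_k}\Tr(h_0 h_{k,m_k})\,f$, as asserted in the paragraph preceding the statement. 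I would then check that every other contribution at this stage involves the lower metric coefficients $h_2,\dots$ or strictly lower powers of $\ln x$, hence cannot feed the top Laurent coefficient.

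Finally I would assemble the numerical factors. Feeding the source through the iterative rule divides by $j(2\zeta-n-j)$, whose factor $2\zeta-n-l=2\bigl(\zeta-(n+l)/2\bigr)$ has a simple zero at $\zeta=n/2+l/2$; together with the combinatorial factor $l^{m_l}$ that the repeated generation of logarithms contributes, this limit procedure supplies the extra factor $\tfrac12\,l^{m_l}$, exactly the mechanism made explicit through the telescoping in \eqref{pl-pl}. Evaluating the prefactor at $\zeta=n/2+l/2$ gives $\tfrac{n-\zeta}{2}=\tfrac{(n-k)/2-1/2}{2}$, so combining this with the factor $k$, with $\Tr(h_0 h_{k,m_k})$, and with the $\tfrac12\,l^{m_l}$ from the limit yields $[(\zeta-n/2+l/2)^{m_l}p_{l,m_l}]|_{\zeta=n/2+l/2}=l^{m_l}k\,\tfrac{(n-k)/2-1/2}{4}\Tr(h_0 h_{k,m_k})$.

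The main obstacle is the bookkeeping in this last step: one must verify that the term singled out above is the sole contributor to the pole of maximal order (so that no lower-order source secretly raises the pole order), and one must correctly track the combinatorial factor $l^{m_l}$ and the factorials produced by the iterated differentiation-and-division that generate the extra logarithm. This is precisely the content of the telescoping identity derived just before the statement, so the cleanest route is to quote that identity and substitute the source $k\tfrac{n-\zeta}{2}x^{k}(\ln x)^{m_k}\Tr(h_0 h_{k,m_k})$, rather than re-deriving the combinatorics from scratch.
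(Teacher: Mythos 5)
Your proposal takes essentially the same route as the paper: the paper likewise runs the iterative (Graham--Zworski-type) construction, identifies the contribution hitting the vanishing denominator at $j=2\zeta-n$ as the term $k\frac{n-\zeta}{2}x^k(\ln x)^{m_k}\Tr(h_0h_{k,m_k})$ arising from $\mcd_\zeta(F_{j-1})$ with $j-1=k$, and then relies on the telescoping computation of \eqref{pl-pl} to supply the factor $l^{m_l}$ and the remaining $\tfrac12$. Your write-up is, if anything, more explicit than the paper's one-sentence derivation about why no other term can contribute to the top Laurent coefficient and how the numerical factors assemble.
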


Now we use the factorization of the scattering matrix (see \cite{Guil})
\begin{equation}
\tilde S(\zeta):=c(n-\zeta) \Lambda^{-\zeta+n/2}S(\zeta)\Lambda^{-\zeta+n/2}
\end{equation} 
with
\begin{equation}
\Lambda=(1+\Delta_{h_0})^{1/2},\quad c(\zeta)=2^{n-2\zeta}\frac{\Gamma(n/2-\zeta)}{\Gamma(\zeta-n/2)}.
\end{equation}
Since the principal symbol of the scattering matrix is given by
$$
\sigma_p(S(\zeta))=c(\zeta)\sigma_p(\Lambda^{2\zeta-n}),
$$
 we can express $\tilde S(\zeta)=1+K(\zeta)$ where $K(\zeta)$ is a compact operator for $\zeta\in \mc\backslash((n/2-\mn_0/2)\cup\mz/2).$

We also use the following lemma whose prove follows closely the one of lemma 4.2 of \cite{Guil}

\begin{lemma}
Let $B$ be a Banach space of infinite dimension, let $\zeta_0\in\mc,$ $m\in\mn,$ and $U$ be a neighborhood of $\zeta_0.$ Let $M(\zeta)\in Hol(U\backslash\{\zeta_0\},L(B))$ be a meromorphic family of bounded operators in U satisfying
$$
M(\zeta)=1+\frac{K_{m}}{(\zeta-\zeta_0)^m}+K(\zeta),\qquad ,
$$ 
with $K_m$ compact and $K(\zeta)$ compact having a pole of order $m-1$ and $\dim \ker K_m<\infty.$ If there exists $z\in U$ such that $M(z)$ is invertible, then $M(\zeta)$ is invertible for almost every $\zeta\in U$ with inverse $M^{-1}(\zeta)$ finite-meromorphic (in the sense of \cite{Guil}) and $\zeta_0$ is an essential singularity of $M^{-1}(\zeta).$
\end{lemma}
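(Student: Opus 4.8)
The plan is to prove the two assertions separately: meromorphy and invertibility away from $\zeta_0$ by analytic Fredholm theory, and the essential character of the singularity at $\zeta_0$ by reduction to the resolvent of the leading compact coefficient $K_m$. Write $w=\zeta-\zeta_0$ and $M(\zeta)=\Id+A(\zeta)$ with $A(\zeta)=w^{-m}K_m+K(\zeta)$. Since $K_m$ and every Laurent coefficient of $K(\zeta)$ are compact, $A(\zeta)$ is a holomorphic family of compact operators on $U\setminus\{\zeta_0\}$, and by hypothesis $M(z)=\Id+A(z)$ is invertible for some $z$. Analytic Fredholm theory (as in \cite{Guil}) then yields that $M(\zeta)^{-1}$ exists and is finite-meromorphic on $U\setminus\{\zeta_0\}$, its poles forming a discrete set $S$; in particular $M(\zeta)$ is invertible off the measure-zero set $S\cup\{\zeta_0\}$, and at each point of $S$ the principal part of $M^{-1}$ has finite rank. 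The hypothesis $\dim\ker K_m<\infty$ guarantees that the finite-rank reduction underlying this step is nondegenerate, exactly as the residue is used in \cite{Guil}.

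For the singularity at $\zeta_0$ I would clear the pole: set $T(\zeta)=w^mM(\zeta)=K_m+w^m\bigl(\Id+K(\zeta)\bigr)$, which is holomorphic near $\zeta_0$ because $K(\zeta)$ has a pole of order at most $m-1$, and satisfies $T(\zeta_0)=K_m$. Since $M^{-1}=w^mT^{-1}$ and $w^m$ is holomorphic and nonvanishing off $\zeta_0$, the point $\zeta_0$ is an essential singularity of $M^{-1}$ if and only if it is one of $T^{-1}$. Writing $\mathcal{K}(\zeta):=K_m+w^mK(\zeta)$ (holomorphic, compact-valued, $\mathcal{K}(\zeta_0)=K_m$) one has $T(\zeta)=\mathcal{K}(\zeta)+w^m\Id$, so that for $w\neq0$
\[
T(\zeta)^{-1}=\bigl(\mathcal{K}(\zeta)-(-w^m)\,\Id\bigr)^{-1},
\]
i.e. $T^{-1}$ is the resolvent of the compact operator $\mathcal{K}(\zeta)$ at the spectral parameter $\mu=-w^m$, which tends to $0$ as $\zeta\to\zeta_0$.

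The decisive point is the fact that for a compact operator on the infinite-dimensional space $B$ the resolvent $\mu\mapsto(K_m-\mu\,\Id)^{-1}$ has an \emph{essential} singularity at $\mu=0$: were it meromorphic there, $0$ would be an isolated point of the spectrum at which the resolvent has a pole, so its Riesz projection would be finite rank and $K_m$ invertible on the complementary closed subspace, forcing $B$ to be finite-dimensional. For the model $M_0=\Id+w^{-m}K_m$ this already finishes the argument: there $T^{-1}=(K_m+w^m)^{-1}$ is a function of $w^m$, and since $w\mapsto-w^m$ is a branched cover fixing the origin, meromorphy of $T^{-1}$ in $w$ would force the resolvent of $K_m$ to be meromorphic in $\mu$, which is impossible; hence $\zeta_0$ is an essential singularity in the model case.

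The main obstacle is to transfer this conclusion to the genuine family $\mathcal{K}(\zeta)$, in which both the operator and the spectral parameter vary with $\zeta$. The plan is to argue by contradiction: assuming $T^{-1}$ (equivalently $M^{-1}$) meromorphic at $\zeta_0$, one factors, on the punctured disc and away from the sequence $w^m\in-\mathrm{spec}(K_m)$,
\[
T(\zeta)=\bigl(K_m+w^m\Id\bigr)\bigl(\Id+(K_m+w^m)^{-1}w^mK(\zeta)\bigr),
\]
and uses $\dim\ker K_m<\infty$ together with the pole bound $\le m-1$ on $K$ to see that the second factor differs from $\Id$ by a finite-meromorphic compact term whose inverse is too mild to cancel the essential singularity produced by the first factor. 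Equivalently, meromorphy of $T^{-1}$ would make $(\mathcal{K}(\zeta)+\mu)^{-1}$ meromorphic near $(\zeta_0,0)$, and specialising $\zeta\to\zeta_0$ would contradict the essential singularity of the resolvent of $K_m$ established above. Controlling this holomorphic compact correction $w^mK(\zeta)$ uniformly as $w\to0$ is where the hypotheses $\dim\ker K_m<\infty$ and the order bound on $K$ are genuinely used, and is the step I expect to require the most care; it parallels the corresponding estimate in the proof of Lemma~4.2 of \cite{Guil}.
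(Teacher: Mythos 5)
Your first step (analytic Fredholm theory on the connected punctured neighborhood $U\setminus\{\zeta_0\}$, giving a finite-meromorphic inverse off a discrete set) is correct and is essentially what the paper, following Lemma~4.2 of \cite{Guil}, does; note though that $\dim\ker K_m<\infty$ plays no role in that step, contrary to your remark. Your treatment of the model case $M_0=\Id+w^{-m}K_m$ is also correct: the observation that a finite-order pole of $(K_m-\mu)^{-1}$ at $\mu=0$ would force the Riesz projection to be finite rank (here is where $\dim\ker K_m<\infty$ enters, via $\dim\ker K_m^N\le N\dim\ker K_m$) and hence $K_m$ invertible compact on a finite-codimension subspace is a clean way to see the essential singularity in that special case.

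The genuine gap is the transfer from the model case to the actual family, which you acknowledge but do not close, and the route you propose would fail. The factorization $T(\zeta)=\bigl(K_m+w^m\Id\bigr)\bigl(\Id+(K_m+w^m\Id)^{-1}w^mK(\zeta)\bigr)$ does not give control of the second factor: although $w^mK(\zeta)=O(w)$ by the order bound on $K$, the norm $\|(K_m+w^m\Id)^{-1}\|$ can grow faster than any power of $|w|^{-1}$ as $w\to 0$ (already for a quasinilpotent, non-nilpotent $K_m$), so the perturbation $(K_m+w^m\Id)^{-1}w^mK(\zeta)$ is not small, need not be invertible or "mild", and moreover the factorization is only available off the sequence $-w^m\in\operatorname{spec}(K_m)$ which accumulates at $0$. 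The argument the paper actually invokes (Lemma~4.2 of \cite{Guil}; the paper gives no independent proof) avoids this entirely: one assumes $M^{-1}$ has a pole of finite order at $\zeta_0$, notes that $M^{-1}-\Id$ is compact-valued since $M-\Id$ is, writes $M^{-1}=\Id+B(\zeta)$ with $B$ finitely meromorphic and compact-valued, and matches Laurent coefficients in $M(\zeta)M^{-1}(\zeta)=\Id$ from the most singular term downward. This shows successively that the polar coefficients of $B$ have ranges confined to finite-dimensional spaces and ultimately that $K_m\circ(\Id+B_0)$ is of finite rank; since $\Id+B_0$ is Fredholm of index zero, $K_m$ itself has finite rank, contradicting the fact that $\dim\ker K_m<\infty$ forces $K_m$ to have infinite rank on an infinite-dimensional $B$. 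You would need to carry out this coefficient-matching argument (or genuinely control the perturbation) to complete the proof; as written, the key case is only established for $K(\zeta)\equiv 0$.
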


We obtain corollary \ref{corres} from the application of the previous results as in \cite{Guil}. 

\section{\bf The inverse problem}\label{secinvprob}

We analyze the relation between the symbol of the scattering matrix and the metric, for that we fix a product structure for which  
\begin{gather}\label{V&g}
  \begin{gathered}
    g_j=\frac{dx^2}{x^2}+\frac{h_j(x,y,dy)}{x^2},\qquad i=1,2.
  \end{gathered}
\end{gather}
Furthermore we assume the metrics $g_1,$ $g_2,$ are related by
$$
h_2(x,y,dy)=h_1(x,y,dy)+x^k(\ln x)^mL(x,y,dy)+O(x^k(\ln x)^{m-1}),
$$
where
$$
L( x,y,dy)=\sum_{i,j} L_{ij}(x,y)dy_idy_j.
$$
Let $P_1$ and $P_2$ be the operators
\begin{gather*}\begin{gathered}
 P_1=\Delta_{g_1}-\zeta_1(n-\zeta_1),\\
P_2=\Delta_{g_2}-\zeta_2(n-\zeta_2),
\end{gathered}\end{gather*}
and  $S_2,$ and $S_1$ be the scattering matrices associated to $P_1,$ and $P_2,$ respectively, and prove the following theorem, which is a central part of the computation and generalizes theorem 3.1. of \cite{jsb0},

\begin{thm}\label{p-p}
Let $g_1,$ $g_2,$ $h_1,$ $h_2,$ be as before. Then denoting by $h( x, y)$ the matrix of coefficients of the tensor $h(x,y,dy),$ we have for 
 $H=h_1(0,y)^{-1}L(x,y)h_1(0,y)^{-1},$ and $T=\Tr(h_1(0,y)^{-1}L( x,y)),$
\begin{equation*}
P_2-P_1=\\x^k(\ln x)^{m}\left(\sum_{i,j=1}^n H_{i,j}x\p_{y_i}x\p_{y_j}+\frac{k(k-n)}{4}h^{-1}_{1_{i,j}}(0,y)T \right)+x^{k}(\ln x)^{m-1}R.
\end{equation*}

\end{thm}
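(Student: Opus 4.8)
The plan is to compute $P_2-P_1$ directly from the collar expression for $\Delta_{g_j}$, exploiting that both metrics share the same defining function $x$ and differ only in the tensor $h$ on $\partial X$. The key structural fact is the identity recalled just before Proposition \ref{uniquepoisson}: in the collar, conjugating by $x^{n-\zeta}$ gives
\begin{equation*}
(\Delta_{g_j}-\zeta(n-\zeta))x^{n-\zeta}=x^{n-\zeta}\mcd_\zeta^{(j)},
\end{equation*}
where $\mcd_\zeta^{(j)}=-(x\p_x)^2+(2\zeta-n-\tfrac{x}{2}\Tr_{h_j}(\p_x h_j))x\p_x-\tfrac{(n-\zeta)x}{2}\Tr_{h_j}(\p_x h_j)+x^2\Delta_{h_j}$. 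Since $P_2-P_1$ is a zeroth-order-in-the-spectral-parameter difference of two such operators, the pure $(x\p_x)^2$ and the leading $x\p_x$ terms (the ones independent of $h$) cancel, and I am left only with the pieces where $h$ enters: the tangential Laplacian $x^2\Delta_{h_j}$ and the two trace terms $\Tr_{h_j}(\p_x h_j)$.

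First I would expand $x^2\Delta_{h_2}-x^2\Delta_{h_1}$. Writing $\Delta_h$ in local coordinates as $\tfrac{1}{\sqrt{\det h}}\p_{y_i}(\sqrt{\det h}\,h^{ij}\p_{y_j})$, the inverse-metric coefficients satisfy $h_2^{-1}=h_1^{-1}-x^k(\ln x)^m\,h_1^{-1}Lh_1^{-1}+O(x^k(\ln x)^{m-1})$ by the standard first-order expansion of a matrix inverse, evaluating the smooth coefficients at $x=0$ so that $h_1^{-1}Lh_1^{-1}$ becomes $H$ up to lower-order-in-$\ln x$ remainders. The second-order part of the difference then contributes $x^k(\ln x)^m\sum_{i,j}H_{ij}\,x\p_{y_i}x\p_{y_j}$, after absorbing the two factors of $x$ from $x^2\Delta_h$ into the $\mcv_0$ vector fields $x\p_{y_i}$; all first-order-in-$\p_y$ and the $\log$-derivative-of-$\sqrt{\det h}$ terms carry an extra factor of $x$ relative to the leading piece and are swept into the $x^k(\ln x)^{m-1}R$ remainder (or into a term that vanishes to one higher order in $x$ and is likewise absorbed).

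Next I would extract the potential term $\tfrac{k(k-n)}{4}h_1^{-1}(0,y)T$ from the trace contributions. Here the crucial input is that the difference $h_2-h_1=x^k(\ln x)^mL+O(x^k(\ln x)^{m-1})$ carries the power $x^k$, so $\p_x$ acting on it produces the factor $k$ from differentiating $x^k$ (and an $n$ from the $(n-\zeta)$/$(2\zeta-n)$ structure of the two trace terms in $\mcd_\zeta$), and $\Tr_{h_2}(\p_x h_2)-\Tr_{h_1}(\p_x h_1)$ reduces at leading order to $\Tr(h_1^{-1}(0,y)\,\p_x(x^k(\ln x)^mL))=k\,x^{k-1}(\ln x)^mT+\cdots$. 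Combining the coefficients from the two trace terms in $\mcd_\zeta$ (one weighted by $2\zeta-n$ through the $x\p_x$ term, one by $n-\zeta$), evaluating at the relevant energy and simplifying, yields the stated constant $\tfrac{k(k-n)}{4}$; again the $(\ln x)^{m-1}$ pieces from differentiating $(\ln x)^m$ land in $R$.

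The main obstacle will be bookkeeping rather than conceptual: I must verify that every term I have discarded genuinely has either one higher power of $x$ (so that it is absorbed after the $x^k$ normalization) or strictly lower power of $\ln x$, so that the decomposition is exactly $x^k(\ln x)^m(\cdots)+x^k(\ln x)^{m-1}R$ with no stray $(\ln x)^m$ survivors. In particular I would check carefully that the cross terms in expanding $\Tr_{h_2}$ versus $\Tr_{h_1}$, the subleading terms in the inverse-matrix expansion, and the $\p_x$ hitting the $(\ln x)^m$ factor all fall into the $(\ln x)^{m-1}$ remainder, and that evaluating $h_1^{-1}$ at $x=0$ rather than at general $x$ only costs terms of one higher order in $x$. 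Assembling these cancellations produces the claimed formula.
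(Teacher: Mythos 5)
Your treatment of the tangential part $x^2(\Delta_{h_2}-\Delta_{h_1})$ is essentially the paper's: expand $h_2^{-1}=h_1^{-1}-x^k(\ln x)^m\,h_1^{-1}Lh_1^{-1}+O(x^k(\ln x)^{m-1})$ and check that the first-order-in-$\p_y$ corrections carry an extra power of $x$. The gap is in your derivation of the zeroth-order coefficient $\frac{k(k-n)}{4}T$. Starting from $\mcd_\zeta=-(x\p_x)^2+(2\zeta-n-\frac{x}{2}\Tr_h(\p_x h))x\p_x-\frac{(n-\zeta)x}{2}\Tr_h(\p_x h)+x^2\Delta_h$ (equivalently, from $\Delta_g$ acting on functions), the difference of the two trace terms is
\[
-\tfrac{k}{2}\,x^k(\ln x)^m\,T\,x\p_x\;-\;\tfrac{(n-\zeta)k}{2}\,x^k(\ln x)^m\,T\;+\;O\bigl(x^k(\ln x)^{m-1}\bigr),
\]
that is, a genuine first-order operator carrying the full weight $(\ln x)^m$ (so it cannot be swept into $x^k(\ln x)^{m-1}R$) plus a $\zeta$-dependent potential. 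There is no algebraic way to ``combine'' these into the $\zeta$-independent multiplication operator $\frac{k(k-n)}{4}T$, so your claimed simplification does not go through.

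The missing ingredient is that the identity is for the operators acting on half-densities: the paper computes $\del_j^{1/4}\Delta_{g_j}(\del_j^{-1/4}f)$ with $\del_j=\det h_j/x^{2(n+1)}$, and since $\del_1\neq\del_2$ the two conjugations differ. It is precisely this difference that eliminates the first-order radial term and produces the potential, via
\[
D_1=\tfrac12\, xf\,\p_x\ln(\del_2/\del_1)+\tfrac14\, x^2f\,\p_x^2\ln(\del_2/\del_1)+\tfrac1{16}\,x^2f\,\p_x\ln(\del_2/\del_1)\,\p_x\ln(\del_2\del_1),
\]
which with $\ln(\del_2/\del_1)=x^k(\ln x)^mT+O(x^k(\ln x)^{m-1})$ yields the coefficient $\frac{k}{2}+\frac{k(k-1)}{4}-\frac{(n+1)k}{4}=\frac{k(k-n)}{4}$. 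To repair your argument you must either set up the computation on $\Gamma_0^{1/2}$-sections from the start or insert the $\del^{1/4}$ conjugation explicitly; the $x^{n-\zeta}$ conjugation alone does not do this job. (A minor further point: you correctly record the minus sign in the inverse-matrix expansion but then state the contribution as $+\sum_{i,j}H_{ij}x\p_{y_i}x\p_{y_j}$; the paper's own proof has the same sign slip, so track it consistently.)
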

\begin{proof}

We want to look at the difference $P_1-P_2$,
The metric is  $g_{00}=\frac{1}{x^2}$, and
$\delta_i=det|g|=\frac{det|h_1|}{x^{2(n+1)}}$
hence acting
on half densities
\begin{equation*}
\del^\frac{1}{4}\Delta_g(\del^{-\frac{1}{4}}f)=
\sum_{i,j=0}^n \del^{-\frac{1}{4}} \p_{z_i}(g^{ij}(f(\p_{z_j}\del^\frac{1}{4})-
\del^\frac{1}{4}(\p_{z_j}f))).
\end{equation*}
In local coordinates
\begin{multline}
\del^\frac{1}{4}\Delta_g(\del^{-\frac{1}{4}}f)=\del^{-1/4}\p_x  x^2 (f(\p_x\del^{1/4})-\del^{1/4}\p_x f)
+\\
+\sum_{i,j}\del^{-1/4}\p_{y_i}  x^2 (h_{ij}^{-1}(x,y))(f(\p_{y_j}\del^{1/4})-\del^{1/4}\p_{y_j}f).
\end{multline}
We analyze the difference of terms in this sum. First the difference of the terms with derivatives with respect to $"x"$
$$
D_1=\del_2^{-1/4}\p_x  x^2 (f(\p_x\del_2^{1/4})-\del_2^{1/4}\p_x f)-\del_1^{-1/4}\p_x  x^2 (f(\p_x\del_1^{1/4})-\del_1^{1/4}\p_x f),
$$
as in \cite{jsb0}\footnote{There is
a little correction to the computation in \cite{jsb0}, pointed out in
\cite{GSa}.}, we have the sum of three terms:
$$
D_1=\frac{1}{2}xf\p_x\ln(\frac{\delta_2}{\delta_1})+\frac{1}{4}x^2f\p_x^2\ln(\frac{\delta_2}{\delta_1})+
\frac{1}{16}x^2f\p_x\ln(\frac{\delta_2}{\delta_1})\p_x\ln(\delta_2\delta_1).
$$
We analyze each of these terms, to do that we recall that the quotient
$$
\frac{\delta_2}{\delta_1}=1+x^k(\ln x)^m\Tr(h_1^{-1}L)+O(x^k(\ln x)^{m-1}).
$$
Near the boundary
\begin{multline*}
\ln(\frac{\delta_2}{\delta_1})=\ln(1+x^k(\ln x)^m\Tr(h_1^{-1}L)+O(x^k(\ln x)^{m-1}))=\\x^k(\ln x)^m\Tr(h_1^{-1}L)+(x^k(\ln x)^m\Tr(h_1^{-1}L))^2O(1).
\end{multline*}
Thus we have:
\begin{gather*}
\p_x\ln(\frac{\delta_2}{\delta_1})=kx^{k-1}(\ln x)^m\Tr(h_1^{-1}L)+O(x^{k-1}(\ln x)^{m-1}),
\end{gather*}
and
\begin{multline*}
\p_x^2\ln(\frac{\delta_2}{\delta_1})=k(k-1)x^{k-2}(\ln x)^m\Tr(h_1^{-1}L)+kmx^{k-2}(\ln x)^{m-1}\Tr(h_1^{-1}L)+\\
+m(k-1)x^{k-2}(\ln x)^{m-1}(\Tr(h_1^{-1}L)+m(m-1)x^{k-2}(\ln x)^{m-2}\Tr(h_1^{-1}L))+O(x^{k-1}(\ln x)^{m-1})=\\
k(k-1)x^{k-2}(\ln x)^m\Tr(h_1^{-1}L)+O(x^{k-2}(\ln x)^{m-1}).
\end{multline*}
Also
\begin{multline*}
\p_x\ln(\frac{\delta_2}{\delta_1})\p_x\ln(\delta_2\delta_1)=\\
(k x^{k-1}(\ln x)^m\Tr(h_1^{-1}L)+O(x^{k-1}(\ln x)^{m-1}))
\cdot(-4(n+1)x^{-1}+O(x^{-1}(\ln x)^{-1}))=\\
-4(n+1)k x^{k-2}(\ln x)^m\Tr(h_1^{-1}L)+O(x^{k-2}(\ln x)^{m-1}).
\end{multline*}
Substituting the later equations into $D_1$ we get
\begin{multline*}
D_1=\frac{h^{-1}_{1_{i,j}}(0,y)}{2}f(kx^{k}(\ln x)^m\Tr(h_1^{-1}L)+O(x^{k}(\ln x)^{m-1}))+\\
+\frac{1}{4}h^{-1}_{1_{i,j}}(0,y)f(k(k-1)x^{k}(\ln x)^m\Tr(h_1^{-1}L)+O(x^{k}(\ln x)^{m-1}))+\\
-\frac{(n+1)}{4}h^{-1}_{1_{i,j}}(0,y)f(k x^{k}(\ln x)^m\Tr(h_1^{-1}L)+O(x^k(\ln x)^{m-1}))=\\
\frac{h^{-1}_{1_{i,j}}(0,y)}{4}f(k(n-k)x^{k}(\ln x)^m\Tr(h_1^{-1}L)+O(x^{k}(\ln x)^{m-1})).
\end{multline*}
For the ones with derivatives with respect to $"y"$ the calculations are similar,
$$
D_{ij}=\del_2^{-1/4}\p_{y_i}  x^2 (h_{2_{ij}}^{-1}(0,y))(f(\p_{y_j}\del_2^{1/4})-\del_2^{1/4}\p_{y_j}f)-\del_1^{-1/4}\p_{y_i}  x^2 (h_{1_{ij}}^{-1}(0,y))(f(\p_{y_j}\del_1^{1/4})-\del_1^{1/4}\p_{y_j}f).
$$
For the rest of the terms in the difference, writing $h_{2_{i,j}}^{-1}(x,y)=h_{1_{i,j}}^{-1}(x,y)+ x^k (\ln x)^{m}[h_1^{-1}L h_1^{-1}]_{i,j}+O(x^{k}(\ln x)^{m-1}),$ we have
\begin{multline*}
D_{ij}=(\p_{y_i} x^2h^{-1}_{1_{i,j}})f(\del_2^{-1/4}\p_{y_j}\del_2^{1/4}-\del_1^{-1/4}\p_{y_j}\del_1^{1/4})\\
+x^2h^{-1}_{1_{i,j}}f(\del_2^{-1/4}\p_{y_i}\p_{y_j}\del_2^{1/4}-\del_1^{-1/4}\p_{y_i}\p_{y_j}\del_1^{1/4})\\
+\del_1^{-1/4}\p_{y_i}[ x^{k+2}(\ln x)^{m}[h_1^{-1}L h_1^{-1}]_{i,j}(f(\p_{y_j}\del^{1/4})-\del^{1/4}(\p_{y_j}f))],
\end{multline*}
and the only one that will contribute to the higher order sum is the last one. This concludes the proof.
\end{proof}
Now we use the theorem to compute the leading singularity for the difference of scattering matrices $S_2(\zeta)-S_1(\zeta).$ As in theorem \ref{p-p} let
\begin{equation*}
P_2-P_1=x^k(\ln x)^{m}E+x^k(\ln x)^{m-1}R,
\end{equation*}
with
\begin{equation*}
E=\sum_{i,j=1}^n H_{i,j}x\p_{y_i}x\p_{y_j}+\frac{k(k-n)}{4}h^{-1}_{1_{i,j}}(0,y)T. 
\end{equation*}
To higher order
\begin{gather*}
P_2(\mcr_1-\mcr_2)=(P_2-P_1)\mcr_1=x^k(\ln x)^{m} E \mcr_1,
\end{gather*}
looking for $\mcr_2$ as a perturbation of $\mcr_1$ leads to finding $F$ so that:
\begin{gather}\label{simpleproof}
P_2(F)=x^k (\ln x)^{m}E \mcr_1.
\end{gather}

We continue to state the theorem 2.1  of \cite{jsb0} for this setting, this gives
information on the pull-back of the difference of scattering matrices, and
hence on the leading singularity of  this difference.

\begin{thm}\label{leadsing}
Let $B_\zeta$ be the Schwartz kernel of $S_2(\zeta)-S_1(\zeta).$ The leading singularity
of $B_\zeta$ is given by
\begin{equation*}
\frac{C(\zeta)}{M(\zeta)}\left[
T_1(k,\zeta)\sum_{i,j=1}^\infty H_{ij}(y)\p_{Y_i}\p_{Y_j}(\ln|Y|)^m |Y|^{2 \zeta-n-k-2}
-T_2(k,\zeta)(
\frac{k}{4}(k-n)T(y))(\ln|Y|)^m|Y|^{2\zeta-n-k}\right]
\end{equation*}
times a non-vanishing $C^2$ half-density.
\end{thm}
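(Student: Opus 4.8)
The plan is to follow the proof of Theorem~2.1 of \cite{jsb0}, carrying the logarithmic factors along at every stage. As recorded just before the statement, writing $F=\mcr_1-\mcr_2$ one has $P_2F=(P_2-P_1)\mcr_1=x^k(\ln x)^mE\,\mcr_1$ modulo the lower-order remainder $x^k(\ln x)^{m-1}R$ of Theorem~\ref{p-p}; solving \eqref{simpleproof} gives $F=\mcr_2\,x^k(\ln x)^mE\,\mcr_1$, and replacing $\mcr_2$ by $\mcr_1$ (the difference being of strictly lower order in both $x$ and $\ln x$) yields the leading term $F\approx\mcr_1\,x^k(\ln x)^mE\,\mcr_1$. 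By the defining relation \eqref{Mscatt}, the Schwartz kernel $B_\zeta$ of $S_2-S_1$ equals, to leading order, $-\tfrac{1}{M_\zeta}$ times the restriction to $R\cap L$ of $b^*\!\big(x^{-\zeta}(x')^{-\zeta}F\big)$. Since restricting $x^{-\zeta}\mcr_1$ and $(x')^{-\zeta}\mcr_1$ to the respective faces produces the Eisenstein (Poisson) kernels, everything reduces to extracting the leading behaviour, at the front face of $\p X\times_0\p X$ where $|Y|$ is the defining function, of an integral of the schematic form
\[
B_\zeta(y,y')\sim-\frac{1}{M_\zeta}\int_{\mh^{n+1}} P_\zeta(x,\bar y,y)\,x^k(\ln x)^m\,\big[E_{\bar y}\,P_\zeta(x,\bar y,y')\big]\,\frac{dx\,d\bar y}{x^{n+1}},
\]
where, on the interior of a leaf of the front face (identified with $\mh^{n+1}$ via Proposition~\ref{b42poly}), the model Poisson kernel is the explicit expression
\[
P_\zeta(x,\bar y,y)=c_\zeta\,\frac{x^\zeta}{(x^2+|\bar y-y|^2)^\zeta}.
\]

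The two terms in the statement then arise from the two pieces of $E=\sum_{i,j}H_{ij}\,x\p_{y_i}x\p_{y_j}+\tfrac{k(k-n)}{4}h^{-1}_{1_{i,j}}(0,y)\,T$. Inserting the zeroth-order piece $\tfrac{k}{4}(k-n)T$ between the two Poisson kernels and evaluating the resulting radial integral gives the term proportional to $(\ln|Y|)^m|Y|^{2\zeta-n-k}$, with scalar coefficient $T_2(k,\zeta)$. Inserting the second-order piece, the derivatives $x\p_{\bar y_i}x\p_{\bar y_j}$ act on $P_\zeta(x,\bar y,y')$ and, after integration, may be transferred to the diagonal variable $Y=y-y'$, producing $\sum_{i,j}H_{ij}\p_{Y_i}\p_{Y_j}$ applied to a power $(\ln|Y|)^m|Y|^{2\zeta-n-k}$, i.e. the term with exponent lowered by two and coefficient $T_1(k,\zeta)$. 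The scalars $T_1$ and $T_2$ are the values of the corresponding radial (Beta/Mellin) integrals in $x$, and hence are ratios of Gamma functions in $\zeta$ and $k$.

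The one genuinely new point relative to \cite{jsb0} is the propagation of $(\ln x)^m$. The clean way to see it is to rescale $x=|Y|u$ in the radial integral: then $x^k(\ln x)^m=|Y|^k u^k(\ln|Y|+\ln u)^m$, and the binomial expansion $\sum_j\binom{m}{j}(\ln|Y|)^j(\ln u)^{m-j}$ shows that the \emph{top} power $(\ln|Y|)^m$ survives, its coefficient being the convergent radial integral $\int_0^\infty u^k\cdots\,\tfrac{du}{u}$ that defines $T_1$ or $T_2$; the overall power of $|Y|$ is fixed by homogeneity of the kernels and the measure. This is exactly the polyhomogeneous mapping behaviour guaranteed by Proposition~\ref{b42poly}. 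I expect the main obstacle to be the bookkeeping here: one must verify that the top logarithmic power is reproduced with precisely the stated coefficient, that the lower powers $(\ln|Y|)^{m-1},\dots$ generated by the expansion belong to the lower-order error (consistent with the $x^k(\ln x)^{m-1}R$ remainder of Theorem~\ref{p-p}), and that no top-order logarithm is lost when the two $y$-derivatives are transferred to $Y$. Once the two model integrals are evaluated, multiplying by the non-vanishing $C^2$ half-density coming from the trivialization of $\Gamma_0^{1/2}$ on the front face, and by $C(\zeta)/M(\zeta)$, yields the stated leading singularity.
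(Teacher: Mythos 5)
Your proposal is correct and follows essentially the same route as the paper: both reduce the logarithmic factor to the log-free computation of Lemma 5.1 of \cite{jsb0} by expanding $\ln x$ in blow-up coordinates ($\ln x=\ln\rho+\ln R$, which is exactly your rescaling $x=|Y|u$) and extracting the top power $(\ln|Y|)^m$, with the lower binomial terms absorbed into the $x^k(\ln x)^{m-1}R$ remainder of Theorem \ref{p-p}. The only cosmetic difference is that the paper posits the ansatz $F=(x')^k(\ln x')^mF_1$ and matches powers of $\ln R$ at the corner $\rho=\rho'=0$, then cites \cite{jsb0} for the model integral, whereas you write that integral out explicitly.
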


\begin{proof}
We decompose the Poisson kernel 
\begin{equation}
E_\zeta=E_{1_\zeta}+E_{2_\zeta},
\end{equation}
with
\begin{gather*}
b^*E_{1_\zeta}\in \mca_{\zeta,-\zeta}(X\times_0\p X), \quad\mbox{and}\quad E_{2_\zeta}\in\mca_\zeta(X\times\p X).
\end{gather*} 
By  \eqref{Mscatt} we have $b_\p^*S(\zeta)=\frac{1}{M_\zeta}b^*(x^{-\zeta}E_\zeta)\mid_{R}.$ Thus the only term contributing to the difference of scattering matrices will be $E_{1_\zeta}$, i.e.  $b_\p^*(S_2(\zeta)-S_1(\zeta))=\frac{1}{M_\zeta}b^*((x^{-\zeta}(E_{2_\zeta}-E_{1_\zeta})\mid_{R}.$

To find this difference we start from \ref{simpleproof}
\begin{equation}\label{pf}
P_2(F)=x^k (\ln x)^{m}E \mcr_1= x^k(\ln\rho+\ln R)^mE\mcr_1
=\sum_{l=0}^m B(l,m)x^k (\ln\rho)^{m-l}(\ln R)^l E \mcr_1.
\end{equation}
We look for $F$ of the form $F=(x')^k(\ln x')^mF_1,$ we put this into \eqref{pf}, using the $P_2$ commutes with $x'$ we get
\begin{equation}\label{pf2}
(\ln x')^mP_2(F_1)=\sum_{l=0}^m B(l,m)s^k (\ln\rho)^{m-l}(\ln R)^l E \mcr_1.
\end{equation}
Using that 
$$
(\ln x')^m=(\ln \rho'+\ln R)^m=\sum_{l=0}^m B(l,m) (\ln\rho')^{m-l}(\ln R)^l,
$$
we get
\begin{equation}
\sum_{l=0}^m B(l,m) (\ln\rho')^{m-l}(\ln R)^l P_2(F_1)=
\sum_{l=0}^m B(l,m)s^k (\ln\rho)^{m-l}(\ln R)^l E \mcr_1.
\end{equation}
We look for the leading singularity on the intersection of the left and right faces ($\rho=\rho'=0$), at this intersection we set the terms with same power of $\ln R$ to be equal and get that  modulo lower order terms $F=B(m,m)(\ln |y-y'|)^m F_1$ with $F_1$ satisfying
\begin{equation}\label{pf4}
P_2(F_1)= s^k E \mcr_1.
\end{equation}
Thus the calculations necessary to obtain the theorem are now those of the proof of lemma 5.1 of \cite{jsb0}.
\end{proof}
Having this lemma, we obtain theorem \ref{invpoly} since we can use it in the same way it is used in \cite{jsb0}.

\section{\bf The inverse problem for Einstein manifolds with odd metric}\label{seceinst}

In this section we  prove  theorem \ref{inveinstein}. For $n+1$ even an inverse result was proved in \cite{GSa}. We assume that $n+1$ is odd for the rest of the section.

\subsection{\bf The Scattering Operator}

  Let $X$ be an $n+1$ dimensional conformally compact Einstein manifold as defined in the introduction. We begin by characterizing the Scattering map. To do that we write the asymptotic series expansion of the solution of $u$ given in theorem \ref{new1}; we recall the Laplacian   
$$
\Delta_g=-(x\p_x)^2+(n-\frac{x}{2}\Tr_h(\p_x h))x\p_x+x^2\Delta_h.
$$
We denote by $\sigma_{pp}(\Delta_g)$ the pure point spectrum of $\Delta_g.$
A first naive try would be to get an asymptotic expansion of the form 
$$
u(x,y)=\sum_{j=0}^\infty f_j x^j. 
$$
By putting together the terms in this Laplacian involving the metric $h$ and the rest, and applying the Laplacian to $f_j x^j,$ a  recursive relation
$$
F_j=\sum_{k=0}^{j}x^kf_k(y),\qquad F_0=f_0=f,\qquad F_j=F_{j-1}+x^j\frac{[x^{-j}(\Delta_g F_{j-1})]\mid_{x=0}}{j(j-n)},
$$
can be obtained. But this only works for the first $j<n$ terms, it breaks down at the nth term. For the nth term we try an polyhomogeneous expansion. We look at the effect of the Laplacian on the logarithmic term $p_n(y)x^n\ln x:$
\begin{equation}\label{efflog}
\Delta_g(p_n(y)x^n\ln x)=-nx^np_n(y)+O(x^{n+1}\ln x).
\end{equation}
Thus setting $p_n=\frac{[x^{-n}\Delta_g(F_{n-1})]\mid_{x=0}}{n}$ the recursion relation at the nth step is $F_n=F_{n-1}+p_nx^n\ln x + f_nx^n$ with $f_n$ arbitrary (we set $f_n=0$) gives $\Delta_g(F_n)=O(x^{n+1}\ln x).$ The construction can then be continued to get$F_{\infty}=\lim_{j\rightarrow\infty} F_j,$  by Borel lemma, We can obtain an asymptotic expansion for $u=F_{\infty}-G\Delta F_{\infty}$ (via the pull-back by the flow of the gradient $\phi$) when $\zeta\rightarrow n,$ of the form
\begin{equation}\label{logasymp}
\phi^*u(x,y)\sim f(y)+\sum_{0<2j<n} x^{2j} f_{2j}(y)+p_nx^n\ln x+  \phi^*(G\Delta F_{\infty})+O(x^{n+1}\ln x).
\end{equation}
Where $\phi^*(G\Delta F_{\infty})=x^n K,$ for some polyhomogeneous operator $K$ by theorem \ref{resolvent}. We define the modified scattering operator,
$$
\tilde\mcs f=-[x^{-n}\phi^*(G\Delta F_{\infty})]\mid_{x=0}.
$$

On the other hand we follow the construction of Graham-Zworski  \cite{GZ}, and then take the limit as $\zeta\rightarrow n,$ First they construct (for $\zeta\notin n/2+\mn_0/2$) $\Phi(\zeta)$ so that,
$$
\Phi(\zeta)f=fx^{n-\zeta}+p_{1,\zeta}fx^{n-\zeta+1}+\cdots+p_{j,\zeta}fx^{n-\zeta+j}+O(x^{n-\zeta+j+1}).
$$
Then define
$$
\Phi_l(\zeta)=\Phi(\zeta)-\Phi(n-\zeta)p_{l,\zeta}.
$$
For $\zeta$ near $n$ the Poisson operator is 
\begin{gather*}
\mcp_l(\zeta)=(I-R(\zeta)(\Delta_g-\zeta(n-\zeta)))\Phi_l(\zeta).
\end{gather*}
We write the action of the Poisson operator on an initial value $f$ explicitly and then take the limit,
\begin{multline}
\mcp_l(\zeta)f=
x^{n-\zeta}(f+p_{1,\zeta}fx^{1}+\cdots+p_{n,\zeta}fx^{n}+\cdots)+\\
-x^{\zeta}(p_{l,\zeta}f+p_{1,\zeta}p_{l,\zeta}fx^{1}+\cdots)+ x^{\zeta}(S(\zeta)f+O(x\ln x)).
\end{multline}
By Proposition 3.4. of \cite{GZ}, 
$$
\mcp_l(\zeta)f=\mcp(\zeta)f=
x^{n-\zeta}(f+p_{1,\zeta}fx^{1}+\cdots+p_{n,\zeta}fx^{n}+\cdots)+x^{\zeta}(S(\zeta)f+O(x\ln x)),
$$
for $\zeta\neq n,$ $\zeta$ near $n.$ Notice that the last equation originally should look like 
$$
\mcp_l(\zeta)f=\mcp(\zeta)f=
x^{n-\zeta}(f+p_{1,\zeta}fx^{1}+\cdots+p'_{n,\zeta}fx^{n}(\ln x)+p_{n,\zeta}fx^{n}+\cdots)+x^{\zeta}(S(\zeta)f+O(x\ln x)),
$$
but $p'_{n,\zeta}=0$ since the manifold is Einstein (c.f. theorem 4.8 \cite{FG1}). 
 Using Proposition 3.6. of \cite{GZ}, since $0\notin \sigma_{pp}(\Delta_g),$ taking  the limit as $\zeta\rightarrow n,$  this has to correspond to \eqref{logasymp}. We use the Taylor expansions
\begin{gather}\begin{gathered}
x^{n-\zeta}\sim 1-(\ln x) (\zeta-n)+(\ln x)^2 (\zeta-n)^2/2+\cdots\\
x^{\zeta}\sim x^n+(\ln x) x^n(\zeta-n)+x^n(\ln x)^2 (\zeta-n)^2/2+\cdots,
\end{gathered}\end{gather}
to get that
\begin{equation}\label{resscatmat}
\Res_{\zeta=n} S(\zeta)-\Res_{\zeta=n} p_{n,\zeta}=p_n,
\end{equation}
and 
\begin{equation}\label{scatoper}
\tilde\mcs f=\frac{d[(n-\zeta)S(\zeta)]}{d\zeta}\mid_{\zeta=n}.
\end{equation}
Putting equation \eqref{resscatmat} together with Proposition 3.6 of \cite{GZ} we get
\begin{equation}
\Res_{\zeta=n} S(\zeta)=-\Res_{\zeta=n} p_{n,\zeta}.
\end{equation} 
Obtaining
\begin{equation}\label{gn}
2\Res_{\zeta=n} S(\zeta)=p_n.
\end{equation}

We state the unique continuation theorem proved in \cite{biq}, which is central for the inverse theorem. To state it we recall \cite{Grah1} that for $n+1$ even 
\begin{gather*}
h(x)=h_0(y,dy)+(\mbox{even powers}) + F_n x^n + \cdots, 
\end{gather*}
and for $n+1$ odd
\begin{gather*}
h(x)=h_0(y,dy)+(\mbox{even powers}) + h_n x^n \ln x + F_n x^n + \cdots. 
\end{gather*}
We call the coefficients $F_n$ the global terms.  

\begin{thm}\label{uniquecont}
Given two conformally compact Einstein metrics $g_1,$ $g_2,$, such that $h_1$ agree with $h_2$ at the boundary and their global term also coincide. There exists a diffeomorphism $\phi$, equal to the identity near the boundary, such that on a neighborhood of the boundary $\phi^*g_1=g_2.$  
\end{thm}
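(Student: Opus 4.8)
The plan is to reduce the statement to a unique continuation problem at the conformal boundary and then invoke the weighted Carleman estimates of \cite{biq}. First I would place both metrics in a common gauge. Using the normalization lemma of Section~\ref{seclaplacian}, I would write each $g_i$ in the geodesic normal form \eqref{hmet} relative to a single boundary defining function $x$, producing families $h_1(x)$ and $h_2(x)$ on $\p X$ with $h_1(0)=h_2(0)=h_0$. In this gauge the Einstein condition $\Ric(g_i)=-ng_i$ becomes a system for the family $h_i(x)$, and the shared boundary metric $h_0$ together with the common global term $F_n$ will serve as the Cauchy data for the comparison.

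The formal part is then handled by the Fefferman--Graham theory quoted above. Feeding the normal form into $\Ric(g_i)=-ng_i$ yields recursion relations that determine every coefficient in the expansion \eqref{hpoly} from $h_0$ uniquely in all orders below $n$; at order $n$ the logarithmic coefficient $h_n$ is again a local expression in $h_0$ (here $n$ is even since $n+1$ is odd), while the global term $F_n$ enters as free data, after which all higher coefficients are determined by $h_0$ and $F_n$. This is precisely the determination result of \cite{FG}. Since $h_1$ and $h_2$ share both $h_0$ and $F_n$, their formal expansions agree to all orders, so in the common gauge $g_1-g_2$ vanishes to infinite order at $\p X$.

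It remains to upgrade infinite-order vanishing to actual vanishing on a collar. To make the Einstein operator elliptic I would break its diffeomorphism invariance by DeTurck's trick: for a suitable diffeomorphism $\phi$ restricting to the identity on $\p X$, the gauged Einstein equations satisfied by $\phi^*g_1$ and $g_2$ form a second-order quasilinear elliptic system, and the difference $u=\phi^*g_1-g_2$ satisfies an equation of the form $Pu=Q(u,\nabla u)$, where $P$ is a uniformly degenerate elliptic operator (a $0$-operator in the sense of Mazzeo--Melrose) and $Q$ vanishes to second order in its arguments. Because $u$ vanishes to infinite order at $\p X$, a unique continuation theorem adapted to the conformal infinity forces $u\equiv 0$ near $\p X$, which is the desired identity $\phi^*g_1=g_2$.

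The analytic heart, and the main obstacle, is this last unique continuation step: the metric degenerates at $\p X$, so $P$ fails to be uniformly elliptic up to the boundary and the classical Aronszajn--Carleman theory does not apply. The device of \cite{biq} is a family of weighted Carleman estimates tuned to the $0$-structure, strong enough to absorb the quadratic error $Q(u,\nabla u)$ and conclude that a solution vanishing to infinite order must vanish identically. Verifying that these estimates accommodate the polyhomogeneous (logarithmic) terms present in the odd-dimensional case, and that the DeTurck diffeomorphism can be chosen to fix the boundary, are the delicate points; once the estimates are in hand the remaining steps are bookkeeping, and the approach of \cite{GSa} noted in the footnote provides an alternative route to the same conclusion.
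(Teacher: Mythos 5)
The paper offers no proof of this statement: Theorem \ref{uniquecont} is imported verbatim from Biquard \cite{biq} (with \cite{GSa} flagged in a footnote as an alternative route), so there is no internal argument to compare against. Your outline --- put both metrics in a common geodesic normal form, use the Fefferman--Graham recursion to see that $h_0$ and the global term $F_n$ determine the full polyhomogeneous expansion and hence that $g_1-g_2$ vanishes to infinite order at $\p X$ in that gauge, then break diffeomorphism invariance by a DeTurck/Bianchi-type gauge fixing the boundary and apply Carleman estimates adapted to the uniformly degenerate ($0$-calculus) structure to upgrade infinite-order vanishing to vanishing on a collar --- is a faithful reconstruction of the strategy of that reference; just be aware that the analytic core you yourself single out (the weighted Carleman estimate at conformal infinity absorbing the quadratic error) is exactly what is being deferred to \cite{biq}, so as written this is a correct roadmap consistent with the cited source rather than a self-contained proof.
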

\subsection{\bf Proof of theorem \ref{inveinstein}}

We first prove the following 

\begin{lemma}

Let $X_i,$ $\p X_i,$ $g_i,$ for $i=1,2,$ be  $n+1$-dimensional Einstein manifolds. Let $S_i$ for $i=1,2,$ be the corresponding scattering  matrix. Assume $\emptyset\neq\mco\subset\p X_1 \cap \p X_2$ an open set, and that $\Id: \mco\subset\p X_1\mapsto\p X_2$ is a diffeomorphism.  If 
$$
\tilde\mcs_1 f\mid_\mco=\tilde\mcs_2 f\mid_\mco
$$
for all $f\in\mc^\infty_0(\mco).$ 
Then the metrics $h_i$ and $h_2$ with asymptotic expansion given by
\begin{gather*}
h_i(x)=h_{i0}(y,dy)+ (\mbox{even powers}) + h_{in} x^n \ln x + F_{in} x^n +\cdots\qquad i=1,2; 
\end{gather*}
satisfy $ h_{10}\mid_\mco=h_{20}\mid_\mco$ and $F_{1n}\mid_\mco=F_{2n}\mid_\mco.$
\end{lemma}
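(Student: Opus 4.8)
The plan is to extract two separate pieces of geometric data from the hypothesis $\tilde\mcs_1 f\mid_\mco=\tilde\mcs_2 f\mid_\mco$: first the boundary metric $h_{i0}$ from the principal symbol of the modified scattering operator, and then the global term $F_{in}$ from the leading off-diagonal singularity of its Schwartz kernel.

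First I would recover $h_0$. Since $n+1$ is odd, $n/2\in\mn$, so $\Gamma(n/2-\zeta)$ has a simple pole at $\zeta=n$, and by the principal symbol formula $\sigma_{2\zeta-n}(S(\zeta))(\eta)=2^{n-2\zeta}\frac{\Gamma(n/2-\zeta)}{\Gamma(\zeta-n/2)}|\eta|^{2\zeta-n}_{h_0}$ the operator $S(\zeta)$ itself has a pole at $\zeta=n$. The operator $\tilde\mcs=\frac{d}{d\zeta}[(n-\zeta)S(\zeta)]\mid_{\zeta=n}$ defined in \eqref{scatoper} is precisely the finite part of $S(\zeta)$ at this pole. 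Expanding $|\eta|^{2\zeta-n}_{h_0}=|\eta|^n_{h_0}\bigl(1+2(\zeta-n)\log|\eta|_{h_0}+\cdots\bigr)$ together with the Gamma quotient about $\zeta=n$, the symbol of $\tilde\mcs$ is seen to carry a leading term proportional to $|\eta|^n_{h_0}\log|\eta|_{h_0}$. Since the homogeneous function $|\eta|^n_{h_0}$ determines the inner product $h_0$ by polarization, the equality of $\tilde\mcs_1$ and $\tilde\mcs_2$ on $\mco$ forces $h_{10}\mid_\mco=h_{20}\mid_\mco$.

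With the boundary metrics agreeing on $\mco$, I would invoke the Fefferman--Graham theory for Einstein metrics recalled in the introduction: for an odd-dimensional conformally compact Einstein manifold every coefficient of the expansion $h_i(x)=h_{i0}+(\text{even powers})+h_{in}x^n\ln x+F_{in}x^n+\cdots$ up to, but not including, the global term $F_{in}$ is locally determined by $h_{i0}$. Hence once $h_{10}=h_{20}$ on $\mco$, all even-power coefficients and the logarithmic coefficient $h_{in}$ agree there, and the first possible discrepancy is at order $x^n$ without a logarithm, $h_2-h_1=x^n(F_{2n}-F_{1n})+O(x^{n+1}(\ln x)^{m'})$ over $\mco$. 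In the notation of Theorem \ref{p-p} and Theorem \ref{leadsing} this is the case $k=n$, $m=0$, with $L=F_{2n}-F_{1n}$.

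Finally I would apply Theorem \ref{leadsing}. At $k=n$ the coefficient $\frac{k}{4}(k-n)$ of the trace term $T$ vanishes, so the leading singularity of $S_2(\zeta)-S_1(\zeta)$ is carried entirely by the tensorial term $\sum_{i,j}H_{ij}\p_{Y_i}\p_{Y_j}|Y|^{2\zeta-n-k-2}$ with $H=h_{10}^{-1}(F_{2n}-F_{1n})h_{10}^{-1}$; passing to the finite part at $\zeta=n$, as encoded by the MSO and \eqref{gn}, this persists in the Schwartz kernel of $\tilde\mcs_2-\tilde\mcs_1$ as a nonzero distribution singular on the diagonal whose coefficient is a nonvanishing multiple of $H$. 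Since $\tilde\mcs_1 f\mid_\mco=\tilde\mcs_2 f\mid_\mco$ for all $f\in\mc^\infty_0(\mco)$, this kernel vanishes off the diagonal over $\mco\times\mco$, forcing $H\equiv0$ and hence $F_{1n}\mid_\mco=F_{2n}\mid_\mco$. The main obstacle is exactly this last renormalization step: one must track the zeros and poles of the prefactor $C(\zeta)/M(\zeta)$ and of $T_1(k,\zeta)$ at $\zeta=n$ to confirm that the finite part selected by the MSO still has its leading singularity governed by $H$ and is not annihilated by a vanishing coefficient, and one must check that the Einstein trace constraint on $F_{in}$ is consistent with the vanishing of the $T$-term at $k=n$, so that no information is lost in that direction.
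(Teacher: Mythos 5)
Your proposal is correct and its skeleton coincides with the paper's: both steps recover $h_{i0}$ from the $|\eta|^{n}_{h_{i0}}\ln|\eta|_{h_{i0}}$ term in the principal symbol of $\tilde\mcs_i$, and then recover the order-$x^n$ data by feeding the first nonvanishing term of $h_2-h_1$ into Theorem \ref{p-p}/Theorem \ref{leadsing} and reading off the leading off-diagonal singularity of the renormalized difference of scattering kernels. The one genuine divergence is the middle step. The paper does not invoke Fefferman--Graham local determination there; it runs the leading-singularity argument twice, first with $(k,m)=(n,1)$ to conclude that the $x^n\ln x$ coefficients agree (``$L=0$''), and only then with the $x^n$ term. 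You instead note that the log coefficient $h_{in}$ is locally determined by $h_{i0}$ (it is the obstruction tensor), hence already equal once $h_{10}=h_{20}$, and jump straight to $(k,m)=(n,0)$ with $L=F_{2n}-F_{1n}$. Your route buys a cleaner argument: you avoid having to separate the $(\ln|Y|)^{m+1}$ and $(\ln|Y|)^{m}$ contributions in the differentiated symbol, at the mild cost of importing the Fefferman--Graham structure theory (which the paper already relies on elsewhere, so nothing new is really assumed). Both arguments share the same technical point, which you at least flag explicitly while the paper passes over it silently: at $k=n$ the trace coefficient $\frac{k}{4}(k-n)$ vanishes, so one must check that $(n-\zeta)\frac{C(\zeta)}{M(\zeta)}T_1(n,\zeta)$ does not vanish to too high order at $\zeta=n$, so that the $H$-term survives the renormalization encoded in $\tilde\mcs$. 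Your closing remark that no information is lost with the $T$-term is right for the stated conclusion, since the angular dependence of $\sum_{i,j}H_{ij}\p_{Y_i}\p_{Y_j}|Y|^{2\zeta-2n-2}$ already determines all of $H$, hence all of $L$, not merely its trace-free part.
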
 
\begin{proof}
The proof is analog to the proof for when $n+1$ is even. 
If
$$
\tilde\mcs_1 f\mid_\mco=\tilde\mcs_2 f\mid_\mco
$$
the principal symbol of $\tilde\mcs$ is given by
$$
2^{-n+1}\frac{(n-\zeta) \Gamma (\frac{n}{2}-\zeta)}{\Gamma (\zeta-\frac{n}{2})}\mid_{\zeta=n}|\eta|^{n}_{h_i(0)}(\ln |\eta|_{h_i(0)}),  
$$
which implies that $h_1(0)=h_2(0).$
We can also compute $(n-\zeta)(S_1(\zeta)-S_2(\zeta))$ which has principal symbol
\begin{multline*}
(n-\zeta)\frac{C(\zeta)}{M(\zeta)}\times\\
\times\left[
T_1(k,\zeta)\sum_{i,j=1}^\infty H_{ij}(y)\p_{Y_i}\p_{Y_j}(\ln|Y|)^m |Y|^{2 \zeta-n-k-2}
-T_2(k,\zeta)(
\frac{k}{4}(k-n)T(y))(\ln|Y|)^m|Y|^{2\zeta-n-k}\right].
\end{multline*}
Taking derivative with respect to $\zeta$ and evaluating at $\zeta=n$ we get 
\begin{multline*}
\left((n-\zeta)\frac{C(\zeta)}{M(\zeta)}T_1(k,\zeta)\right)\mid_{\zeta=n}
2\sum_{i,j=1}^\infty H_{ij}(y)\p_{Y_i}\p_{Y_j}(\ln|Y|)^{m+1} |Y|^{n-k-2}\\
-\left((n-\zeta)\frac{C(\zeta)}{M(\zeta)}T_2(k,\zeta)\right)\mid_{\zeta=n}
2\frac{k}{4}(k-n)T(y))(\ln|Y|)^{m+1}|Y|^{n-k}
\end{multline*}
+
\begin{multline*}
\left((n-\zeta)\frac{C(\zeta)}{M(\zeta)}T_1(k,\zeta)\right)'\mid_{\zeta=n}
\sum_{i,j=1}^\infty H_{ij}(y)\p_{Y_i}\p_{Y_j}(\ln|Y|)^m |Y|^{n-k-2}\\
-\left((n-\zeta)\frac{C(\zeta)}{M(\zeta)}T_2(k,\zeta)\right)'\mid_{\zeta=n}
\frac{k}{4}(k-n)T(y))(\ln|Y|)^m|Y|^{n-k}.
\end{multline*}
When $k=n$ and $m=1$ we get that $L=0,$  we have then that  $h_1=h_2+x^k L_1+O(x^{n+1})$ and the same reasoning could be applied inductively to get the Lemma.
\end{proof}

 Applying theorem \ref{uniquecont} we get an isometry in a neighborhood of the boundary. To extend this isometry to the whole manifold, we apply theorem 4.1 of \cite{LTU} to the complete manifolds without boundary  $(\intx_1,g_1)$, $(\intx_2,g_2)$ just as in \cite{GSa},  to get  theorem \ref{inveinstein}.

\end{document}